\DeclareMathAlphabet\mathbfcal{OMS}{cmsy}{b}{n}
\definecolor{shadethmcolor}{rgb}{1,0.871,0.790}
\definecolor{shaderulecolor}{rgb}{0.651,0.074,0.090}
\newcommand{\dis}{\displaystyle}
\newcommand{\R}{\mathbb{R}}
\newcommand{\N}{\mathbb{N}}
\newcommand{\A}{\mathcal{A}}
\newcommand{\ac}{\mathcal{A}_0}
\begin{document}

\title{Constructive exact control of semilinear 1D heat equations}

\author{J\'er\^ome Lemoine\thanks{Laboratoire de math\'ematiques Blaise Pascal, Universit\'e Clermont Auvergne, UMR CNRS 6620, Campus des C\'ezeaux, 3, place Vasarely, 63178 Aubi\`ere, France. e-mail: jerome.lemoine@uca.fr.}
\quad\qquad \ Arnaud M\"unch\thanks{Laboratoire de math\'ematiques Blaise Pascal, Universit\'e Clermont Auvergne, UMR CNRS 6620, Campus des C\'ezeaux, 3, place Vasarely, 63178 Aubi\`ere, France. e-mail: arnaud.munch@uca.fr (Corresponding author).}
 }

\maketitle

\begin{abstract}


The exact distributed controllability of the semilinear heat equation $\partial_{t}y-\Delta y + g(y)=f \,1_{\omega}$ posed over multi-dimensional and bounded domains, assuming that $g\in C^1(\mathbb{R})$ satisfies the growth condition $\limsup_{r\to \infty} g(r)/(\vert r\vert \ln^{3/2}\vert r\vert)=0$ has been obtained by Fern\'andez-Cara and Zuazua in 2000. The proof based on a non constructive fixed point arguments makes use of precise estimates of the observability constant for a linearized heat equation. 
In the one dimensional setting, assuming that $g^\prime$ does not grow faster than $\beta \ln^{3/2}\vert r\vert$ at infinity for $\beta>0$ small enough and that $g^\prime$ is uniformly H\"older continuous on $\mathbb{R}$ with exponent $p\in [0,1]$, we design a constructive proof yielding an explicit sequence converging to a controlled solution  for the semilinear equation, at least with order $1+p$ after a finite number of iterations.
\end{abstract}
 
\textbf{AMS Classifications:} 35K58, 93B05.

\textbf{Keywords:} Semilinear heat equation,  Null controllability, Least-squares approach.

\section{Introduction}

Let $\Omega=(0,1)$, $\omega\subset\subset \Omega$ be any non-empty open set and let $T>0$. We set $Q_T=\Omega\times (0,T)$, $q_T=\omega\times (0,T)$ and $\Sigma_T=\partial\Omega\times (0,T)$.  We are concerned with the null controllability problem for the following semilinear heat equation
\begin{equation}
\label{heat-NL}
\left\{
\begin{aligned}
& \partial_ty - \partial_{xx} y +  g(y)= f 1_{\omega} \quad  \textrm{in}\quad Q_T,\\
& y=0 \,\,\, \textrm{on}\,\,\, \Sigma_T, \quad y(\cdot,0)=u_0 \,\,\, \textrm{in}\,\,\, \Omega,
\end{aligned}
\right.
\end{equation}
where $u_0\in H^1_0(\Omega)$ is the initial state of $y$ and $f\in L^2(q_T)$ is a {\it control} function. We assume moreover that the nonlinear function $g:\mathbb{R} \mapsto \mathbb{R}$ is, at~least, locally Lipschitz-continuous and, following \cite{EFC-EZ}, that $g$ satisfies
   \begin{equation}
   \vert  g^{\prime}(r) \vert \leq C (1+\vert r\vert^5) \quad \forall r\in \mathbb{R}. \label{cond_f_5}
   \end{equation}
   Under this condition, (\ref{heat-NL}) possesses exactly one local in time solution.
   Moreover, we recall (see ~\cite{CazenaveHaraux}) that under the growth condition
   \begin{equation}
\vert g(r)\vert \leq C(1+\vert r\vert \ln(1+\vert r\vert)) \quad \forall r\in \mathbb{R}, \label{growth-f}
   \end{equation}
   the solutions to (\ref{heat-NL}) are globally defined in $[0,T]$ and one has
   \begin{equation}\label{globalT}
   y \in C^0([0,T]; H_0^1(\Omega))\cap L^2(0,T; H^2(\Omega)).
   \end{equation}
  Without a growth condition of the kind (\ref{growth-f}), the solutions to~(\ref{heat-NL}) can blow up before $t=T$;
   in general, the blow-up time depends on~$g$ and the size of $\Vert u_0\Vert_{L^2(\Omega)}$.

  The system (\ref{heat-NL}) is said to be {\it controllable} at time $T$ if, for any $u_0\in L^2(\Omega)$ and any globally defined bounded trajectory $y^{\star} \in C^0([0,T]; L^2(\Omega))$
   (corresponding to data $u_0^\star \in L^2(\Omega)$ and $f^{\star}\in L^2(q_T)$), there exist controls $f \in L^2(q_T)$ and associated states~$y$ that are again globally defined in $[0,T]$ and satisfy~\eqref{globalT} and
   \begin{equation}\label{stateT}
   y(x,T) = y^{\star}(x,T), \quad x \in \Omega.
   \end{equation}
 The uniform controllability strongly depends on the nonlinearity $g$. Assuming a growth condition on the nonlinearity $g$ at infinity, this problem has been solved by Fern\'andez-Cara and Zuazua in \cite{EFC-EZ} (which also covers the multi-dimensional case for which $\Omega\subset \R^d$ is a bounded connected open set with Lipschitz boundary).
\begin{theorem}\label{nullcontrollheatplus}\cite{EFC-EZ} 
Let $T>0$ be given. Assume that $(\ref{heat-NL})$ admits at least one solution $y^{\star}$, globally defined in $[0,T]$ and bounded in $Q_T$.
   Assume that $g:\mathbb{R} \mapsto \mathbb{R}$ is $C^1$ and satisfies $(\ref{cond_f_5})$ and
\begin{enumerate}[label=$\bf (H_1)$,leftmargin=1.5cm]
\item\label{asymptotic_g}\ $\limsup_{\vert r\vert \to \infty} \frac{\vert g(r)\vert }{\vert r\vert \ln^{3/2}\vert r\vert}=0$
\end{enumerate}
then \eqref{heat-NL} is controllable in time $T$.
\end{theorem}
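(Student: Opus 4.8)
\medskip
\noindent\textit{Proof proposal.} I would follow the linearization and fixed point scheme of \cite{EFC-EZ}, whose analytic heart is a sharp observability estimate for a linearized adjoint heat equation whose cost depends only on the $2/3$-power of the $L^{\infty}$-norm of the zero-order coefficient; the growth exponent $3/2$ in \ref{asymptotic_g} is precisely the one that makes the corresponding control cost (sub)linear in the size of the data around which one linearizes. First, reduce \eqref{stateT} to a \emph{null} controllability problem: for $z:=y-y^{\star}$ one has $\partial_t z-\partial_{xx}z+\bigl(g(z+y^{\star})-g(y^{\star})\bigr)=(f-f^{\star})\,1_{\omega}$ in $Q_T$, $z=0$ on $\Sigma_T$, $z(\cdot,0)=u_0-u_0^{\star}$, and \eqref{stateT} becomes $z(\cdot,T)=0$. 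Setting $h(x,t,s):=g(s+y^{\star}(x,t))-g(y^{\star}(x,t))$ one gets $h(\cdot,\cdot,0)\equiv 0$, $h(x,t,\cdot)\in C^{1}$, and, since $y^{\star}$ is bounded in $Q_T$, hypothesis \ref{asymptotic_g} passes to $h$ uniformly in $(x,t)$: $\sup_{Q_T}|h(x,t,s)|\,\big/\,(|s|\ln^{3/2}|s|)\to0$ as $|s|\to\infty$. With the continuous quotient $\widehat h(x,t,s):=h(x,t,s)/s$ (and $\widehat h(x,t,0):=\partial_s h(x,t,0)$) one has $h(x,t,s)=\widehat h(x,t,s)\,s$ and, for every $\varepsilon>0$, a constant $C_{\varepsilon}$ independent of $(x,t)$ with $|\widehat h(x,t,s)|\le\varepsilon\ln^{3/2}(2+|s|)+C_{\varepsilon}$.

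For $\zeta$ given, I would then consider the \emph{linear} null controllability problem $\partial_t z-\partial_{xx}z+a_{\zeta}z=v\,1_{\omega}$ in $Q_T$, $z=0$ on $\Sigma_T$, $z(\cdot,0)=u_0-u_0^{\star}$, $z(\cdot,T)=0$, with potential $a_{\zeta}:=\widehat h(\cdot,\cdot,\zeta)$, and define $\Lambda(\zeta):=z$ where $v$ is, say, the control of minimal $L^{2}(q_T)$-norm; a fixed point of $\Lambda$ solves the semilinear problem, since then $h(\cdot,\cdot,z)=a_z z$. The key ingredient is the global Carleman estimate of Fursikov--Imanuvilov applied to the adjoint system $-\partial_t\varphi-\partial_{xx}\varphi+a\varphi=0$, $\varphi(\cdot,T)=\varphi_T$: for a constant $C=C(\Omega,\omega)>0$ and every $a\in L^{\infty}(Q_T)$,
\[
\|\varphi(\cdot,0)\|_{L^{2}(\Omega)}^{2}\ \le\ \exp\!\Bigl(C\bigl(1+\tfrac1T+(1+T)\,\|a\|_{L^{\infty}(Q_T)}^{2/3}\bigr)\Bigr)\iint_{q_T}|\varphi|^{2}\,dx\,dt,
\]
the power $2/3$ coming from absorbing the zero-order term $a\varphi$ into the cubic weight of the Carleman inequality through an optimal choice of the large Carleman parameter. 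By duality this produces a control $v$ and a state $z$ whose norms (in $L^{2}(q_T)$, resp.\ in an energy space) are controlled by $\exp\!\bigl(C(1+\tfrac1T+(1+T)\|a_{\zeta}\|_{\infty}^{2/3})\bigr)\|u_0-u_0^{\star}\|_{L^{2}(\Omega)}$. Since $\|a_{\zeta}\|_{\infty}^{2/3}\le 2\,\varepsilon^{2/3}\ln(2+\|\zeta\|_{\infty})+C'_{\varepsilon}$, this bound is at most $\widetilde C_{\varepsilon}\,(2+\|\zeta\|_{\infty})^{\,c\,\varepsilon^{2/3}}$, which grows \emph{sub-linearly} in $\|\zeta\|_{\infty}$ as soon as $\varepsilon$ is chosen small --- and the fact that \ref{asymptotic_g} is a $\limsup=0$ statement (not mere boundedness of the ratio $g(r)/(|r|\ln^{3/2}|r|)$) is exactly what lets $\varepsilon$ be taken freely. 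This sublinearity is the mechanism by which a large ball is mapped into itself.

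The remaining, genuinely technical point is that $a_{\zeta}\in L^{\infty}(Q_T)$ only when $\zeta\in L^{\infty}(Q_T)$, while the compactness needed for Schauder's theorem is natural in $L^{2}(Q_T)$. I would handle this by truncation: for $n\in\N$, replace $h$ by $h_n$ cut off in the $s$-variable at level $\pm n$ (e.g.\ $h_n=h$ for $|s|\le n$ and $h_n(x,t,s)=h(x,t,n\,\mathrm{sgn}\,s)$ for $|s|>n$), so that the associated quotient obeys $\|\widehat h_n\|_{L^{\infty}}\le\varepsilon\ln^{3/2}(2+n)+C_{\varepsilon}$ for \emph{all} $s$ --- crucially one truncates the function $h$, so this bound still comes from \ref{asymptotic_g} and not from the possibly badly behaved derivative of $g$. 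For each fixed $n$ the map $\Lambda_n:L^{2}(Q_T)\to L^{2}(Q_T)$ is then well defined, continuous, and has bounded range in an energy space, hence precompact in $L^{2}(Q_T)$ by Aubin--Lions; Schauder's theorem (or Kakutani's, since the null control is not unique) provides a fixed point $z_n$, a controlled solution of the semilinear problem with nonlinearity $h_n$ and control $v_n$. One then passes to the limit $n\to\infty$: combining the control estimate above with the equation satisfied by $z_n$ and using once more the sublinear (indeed $o(\cdot)$) nature of the growth, one extracts bounds on $(z_n,v_n)$ allowing a weak--strong compactness argument, $z_n\to z$, with the limit in the nonlinear term (via $h_n\to h$ locally uniformly) and in $z_n(\cdot,T)=0$. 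Undoing $z=y-y^{\star}$ yields a control $f\in L^{2}(q_T)$ and a globally defined controlled state $y$ with $y(\cdot,T)=y^{\star}(\cdot,T)$.

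The main obstacle is twofold. First, the sharp observability estimate with \emph{exactly} the $\|a\|_{\infty}^{2/3}$ dependence: this is the real content, obtained from the global Carleman inequality by optimizing the large parameter against the cubic weight, and its exponent $2/3$ must be matched by the growth threshold through the identity $(\ln^{3/2})^{2/3}=\ln$, so that the exponential of the observability exponent becomes merely (sub)polynomial in the data around which one linearizes; growth up to $|r|\ln^{2}|r|$ would already make the cost super-polynomial and the scheme would break down. Second, closing the fixed point in the presence of the (a priori unbounded) linearized potential: the truncation device above is the standard way around it, but making the limit passage uniform in the truncation level is where the delicate interplay between the $2/3$-power and the $3/2$-growth is really used.
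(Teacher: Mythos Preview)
The paper does not give its own proof of this theorem; it is quoted from \cite{EFC-EZ}, and only a one-paragraph description of the strategy is provided (the operator $\Lambda:L^\infty(Q_T)\to L^\infty(Q_T)$ built from the linearization \eqref{NL_z}, Kakutani's fixed point, and controls of minimal $L^\infty(q_T)$-norm). Your sketch is a faithful reconstruction of that strategy and is correct in spirit: the reduction to null controllability around $y^\star$, the linearization via the quotient $\widehat h$, the observability constant of the form $\exp\bigl(C(1+1/T+(1+T)\Vert a\Vert_\infty^{2/3})\bigr)$, and the matching $(\ln^{3/2})^{2/3}=\ln$ that turns the cost into a sublinear power of $\Vert\zeta\Vert_\infty$ are exactly the ingredients of \cite{EFC-EZ}.

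The one genuine departure from the description in the paper is your choice of $L^2(q_T)$-minimal controls together with a truncation-and-limit argument in $L^2(Q_T)$, whereas \cite{EFC-EZ} takes $L^\infty(q_T)$-minimal controls and works directly in $L^\infty(Q_T)$. The point of the $L^\infty$ control in \cite{EFC-EZ} is precisely to obtain $L^\infty(Q_T)$ bounds on the state in any space dimension (via refined $L^1$ Carleman estimates, as the paper notes in its conclusion), so that $\Lambda$ maps a ball of $L^\infty(Q_T)$ into itself without truncation. Your $L^2$-control route is fine in the one-dimensional setting of the present paper, since an $L^2$ right-hand side and $H^1_0$ initial datum yield $z\in L^\infty(Q_T)$ by parabolic regularity in $d=1$; in higher dimensions this is exactly where your scheme would need an extra ingredient (either the $L^\infty$ control of \cite{EFC-EZ} or a bootstrap), and your truncation limit would require a uniform $L^\infty$ bound on $z_n$ to ensure the cutoff is eventually inactive. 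So: same strategy, same key estimate, but a different bookkeeping device for the $L^\infty$ issue, adequate here because $d=1$.
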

Therefore, if $|g(r)|$ does not grow at infinity faster than $\vert r \vert \ln^{p}(1+\vert r\vert)$ for any $p<3/2$, then (\ref{heat-NL}) is controllable. 
 We also mention \cite{VB1} which gives the same result assuming additional sign condition on $g$, namely $g(r)r\geq -C (1+r^2)$ for all $r\in \mathbb{R}$ and some $C>0$. On the contrary, if $g$ is too ``super-linear" at infinity, precisely, if $p>2$, then for some initial data, the control cannot compensate the blow-up phenomenon occurring in $\Omega \backslash \overline{\omega}$ (see \cite[Theorem 1.1]{EFC-EZ}). The problem remains open when $g$ behaves at infinity like $\vert r\vert \ln^p(1+\vert r\vert)$ with $3/2\leq p\leq 2$. We mention however the recent work of Le Balc'h \cite{KLB} where uniform controllability results are obtained for $p\leq 2$ assuming additional sign conditions on $g$, notably that $g(r)>0$ for $r>0$ or $g(r)<0$ for $r<0$, a condition not satisfied for $g(r)=-r\, \ln^p(1+\vert r\vert)$. 
Eventually, we also mention \cite{coron-trelat} where a positive boundary controllability result is proved for a specific class of initial and final data and $T$ large enough.

   In the sequel, for simplicity, we shall assume that $g(0)=0$ and that $f^\star\equiv 0, u_0^\star\equiv 0$ so that $y^\star$ is the null trajectory. The proof given in \cite{EFC-EZ} is based on a fixed point method, initially introduced in \cite{Zuazua_WaveNL} for a one dimensional wave equation.  Precisely, it is shown that the operator $\Lambda:L^\infty(Q_T)\to L^\infty(Q_T)$, where $y:=\Lambda(z)$ is a null controlled solution of the linear boundary value problem 
\begin{equation}
\label{NL_z}
\left\{
\begin{aligned}
& \partial_t y - \partial_{xx} y +  y \,\tilde{g}(z)= f 1_{\omega} \quad  \textrm{in}\quad Q_T\\
& y=0 \,\, \textrm{on}\,\, \Sigma_T, \quad y(\cdot,0)=u_0 \quad \textrm{in}\quad \Omega
\end{aligned}
\right., 
\qquad
\tilde{g}(r):=
\left\{ 
\begin{aligned}
& g(r)/r & r\neq 0\\
& g^{\prime}(0) & r=0
\end{aligned}
\right.
\end{equation}
maps a closed ball $B(0,M) \subset L^\infty(Q_T)$ into itself, for some $M>0$. The Kakutani's theorem then provides the existence of at least one fixed point for the operator $\Lambda$, which is also a controlled solution for (\ref{heat-NL}). The control of minimal $L^\infty(q_T)$ is considered in \cite{EFC-EZ}. This allows, including in the multi-dimensional case to obtain controlled solutions in $L^\infty(Q_T)$.

The main goal of this work is to determine an approximation of the controllability problem associated with~(\ref{heat-NL}), that is to construct an explicit sequence $(f_k)_{k\in \mathbb{N}}$ converging strongly toward a null control for~(\ref{heat-NL}). A natural strategy is to take advantage of the method used in \cite{KLB, EFC-EZ} and consider, for any element $y_0\in L^\infty(Q_T)$, the Picard iterations defined by $y_{k+1}=\Lambda(y_k)$, $k\geq 0$ associated with the operator $\Lambda$. The resulting sequence of controls $(f_k)_{k\in \mathbb{N}}$ is then so that $f_{k+1}\in L^2(q_T)$ is a null control for $y_{k+1}$ solution of 
\begin{equation}
\label{NL_z_k}
\left\{
\begin{aligned}
& \partial_t y_{k+1} - \partial_{xx} y_{k+1} +  y_{k+1} \,\widetilde{g}(y_k)= f_{k+1} 1_{\omega} \quad  \textrm{in}\quad Q_T,\\
& y_{k+1}=0 \,\,\, \textrm{on}\,\,\, \Sigma_T, \quad y_{k+1}(\cdot,0)=u_0 \,\,\, \textrm{in}\,\,\, \Omega.
\end{aligned}
\right. 
\end{equation}
Numerical experiments reported in \cite{EFC-AM} exhibit the non convergence of the sequences $(y_k)_{k\in\mathbb{N}}$ and  $(f_k)_{k\in\mathbb{N}}$ for some initial conditions large enough. This phenomenon is related to the fact that the operator $\Lambda$ is in general not contracting, even if $\widetilde{g}$ is globally Lipschitz. We also refer to \cite{BoyerCanum2012,Boyer-LeRousseau2014} where this strategy is implemented. 
A least-squares type approach, based on the minimization over $L^2(Q_T)$ of the functional $\mathcal{R}:L^2(Q_T)\to \mathbb{R}^+$ defined by $\mathcal{R}(z):=\Vert z-\Lambda(z)\Vert^2_{L^2(Q_T)}$
has been introduced and analyzed in \cite{EFC-AM}. Assuming that $\widetilde{g}\in C^1(\mathbb{R})$ and $g^{\prime}\in L^\infty(\mathbb{R})$, it is proved that $\mathcal{R}\in C^1(L^2(Q_T);\mathbb{R}^+)$ and that, for some constant $C>0$
$$
\Vert \mathcal{R}^\prime(z)\Vert_{L^2(Q_T)}\geq (1-C\Vert g^\prime\Vert_\infty \Vert u_0\Vert_\infty)\sqrt{2\mathcal{R}(z)} \quad \forall z\in L^2(Q_T)
$$
implying that if $\Vert g^\prime\Vert_\infty \Vert u_0\Vert_\infty$ is small enough, then any critical point for $\mathcal{R}$ is a fixed point for $\Lambda$ (see \cite[Proposition 3.2]{EFC-AM}). Under this assumption on the data, numerical experiments reported in \cite{EFC-AM} display the convergence of gradient based minimizing sequences for $\mathcal{R}$ and a better behavior than the Picard iterates. The analysis of convergence is however not performed. As is usual for nonlinear problems and also considered in \cite{EFC-AM}, we may employ a Newton type method to find a zero of the mapping $\widetilde{F}: Y \mapsto W$ defined by
   \begin{equation}\label{def-F}
\widetilde{F}(y,f) = (\partial_t y - \partial_{xx} y  + g(y) - f 1_{\omega}, y(\cdot\,,0) - u_0) \quad \forall (y,f) \in Y
   \end{equation}
 where the Hilbert space $Y$ and $W$ are defined as follows
   $$
\begin{aligned}
Y := & \biggl\{\, (y,f) : \rho y\in L^2(Q_T), \ \rho_0(\partial_t y - \partial_{xx} y)  \in L^2(Q_T), y = 0\  \hbox{on}  \ \Sigma_T, \ \rho_0 f \in L^2(q_T) \,\biggr\}
\end{aligned}
   $$
and $W := L^2(\rho_0;Q_T) \times  L^2(\Omega)$ for some appropriates weights $\rho_i$ (defined in the next section). Here $L^2(\rho_0;Q_T)$ stands for $\{z: \rho_0 z\in L^2(Q_T) \}$. It is shown in \cite{EFC-AM} that, if $g\in C^1(\mathbb{R})$ and $g^{\prime}\in L^\infty(\mathbb{R})$, then $\widetilde{F}\in C^1(Y;W)$ allowing to derive the Newton iterative sequence: given $(y_0,f_0)$ in $Y$, define the sequence $(y_k,f_k)_{k\in \mathbb{N}}$ in $Y^{\mathbb{N}}$ iteratively as follows $(y_{k+1},f_{k+1})=(y_k,f_k)-(Y_k,F_k)$ where $F_k$ is a control for $Y_k$ solution of  
\begin{equation}
\label{Newton-nn}
\left\{
\begin{aligned}
&\partial_t Y_k-\partial_{xx} Y_{k} +  g^{\prime}(y_k)\,Y_{k} = F_{k}\, 1_{\omega} + \partial_t y_{k} - \partial_{xx} y_k  + g(y_k) - f_k 1_{\omega}
                                                                                    & \quad  \textrm{in}\quad Q_T,\\
& Y_k=0                                               \quad  \textrm{on}\quad \Sigma_T, \quad Y_k(\cdot,0)=u_0-y_k(\cdot,0) \quad  \textrm{in}\quad \Omega.
\end{aligned}
\right.
   \end{equation}
Numerical experiments in \cite{EFC-AM} exhibits the lack of convergence of the Newton method for large enough initial condition, for which the solution $y$ is not close enough to the zero trajectory. 
 
   The controllability of nonlinear partial differential equations has attracted a large number of works in the last decades (see the monography \cite{coron-book} and references therein).
However, as far as we know, few are concerned with the approximation of exact controls for nonlinear partial differential equations, and the construction of convergent control approximations for nonlinear equations remains a challenge. 

In this article, given any initial data $u_0\in H_0^1(0,1)$, we design an algorithm providing a sequence $(f_k)_{k\in \N}$ converging to a controlled solution for \eqref{heat-NL}, under assumptions on $g$ that are slightly stronger than \ref{asymptotic_g}. Moreover, after a finite number of iterations, the convergence is super-linear. This is done (following and improving \cite{AM-PP-2014} devoted to a linear case) by introducing a quadratic functional measuring how much a pair $(y,f) \in Y$ is close to a controlled solution for \eqref{heat-NL} and then by determining a particular minimizing sequence enjoying the announced property. A natural example of an error (or least-squares) functional is given by $\widetilde{E}(y,f):=\frac{1}{2}\Vert \widetilde{F}(y,f)\Vert^2_W$ to be minimized over $Y$. The controllability for~(\ref{heat-NL}) is reflected by the fact that the global minimum of the nonnegative functional $\widetilde{E}$ is zero, over all pairs $(y,f)\in Y$ solutions of \eqref{heat-NL}. 

  The paper is organized as follows. In Section \ref{sec_control_linearized}, we first derive a controllability result for a linearized wave equation with potential in $L^\infty(Q_T)$ and source term in $L^2(Q_T)$. Then, in Section \ref{sec_LS}, we define the least-squares functional $E$ and the corresponding non convex optimization problem \eqref{extremal_problem} over the Hilbert space $\mathcal{A}$. We show that $E$ is Gateaux-differentiable over $\mathcal{A}$ and that any critical point $(y,f)$ for $E$ for which $g^\prime(y)$ belongs to $L^\infty(Q_T)$ is also a zero of $E$ (see Proposition \ref{proposition4}). This is done by introducing a pair $(Y^1,F^1)$ for $E(y,f)$ for which $E^\prime(y,f)\cdot (Y^1,F^1)$ is proportional to $E(y,f)$. Then, in Section \ref{sec_convergence},assuming that the nonlinear function $g$ is such that $g^\prime$ is uniformly Holder continuous with exponent $p$, for some $p\in [0,1]$, we determine a minimizing sequence based on $(Y^1,F^1)$ which converges strongly to a controlled pair for the semilinear heat equation (\ref{heat-NL}). Moreover, we prove that after a finite number of iterates, the convergence enjoys a rate equal to $1+p$ (see Theorem \ref{ths1}). Section \ref{sec:remarks} gathers several remarks on the approach: we notably emphasize that this least-squares approach coincides with the damped Newton method one may use to find a zero of a mapping similar to $\widetilde{F}$ mentioned above: this explains the super-linear convergence obtained. We also discuss some other linearizations of the system (\ref{heat-NL}). We conclude in Section \ref{conclusion} with some perspectives.

  As far as we know, the method introduced and analyzed in this work is the first one providing an explicit, algorithmic construction of exact controls for semilinear heat equations with non Lipschitz nonlinearity. It extends the study \cite{lemoine_gayte_munch} assuming that $g^\prime\in L^\infty(\mathbb{R})$ which to obtain directly a uniform bound of the observability constant. The weaker assumption considered here required a refined analysis similar to the one recently developed by the author in \cite{munch_lemoine_waveND, munch_trelat} for the wave equation. The parabolic case is however more much intricate (than the hyperbolic one) as it makes appear Carleman type weights depending on the controlled solution. These works devoted to controllability problems take their roots in the works \cite{lemoinemunch_time, lemoinemunch_NUMER} concerned with the approximation of solution of Navier-Stokes type problem, through least-square methods: they refine the analysis performed in \cite{lemoine-Munch-Pedregal-AMO-20, MunchMCSS2015} inspired from the seminal contribution \cite{Bristeau1979}.

\paragraph{Notations.}
Throughout, we denote by $\Vert \cdot \Vert_{\infty} $ the usual norm in $L^\infty(\R)$, by $(\cdot,\cdot)_X$ the scalar product of $X$ (if $X$ is a Hilbert space) and by $\langle \cdot, \cdot \rangle_{X,Y}$ the duality product between $X$ and $Y$.

Given any $p\in [0,1]$, we introduce for any $g\in C^1(\R)$ the following hypothesis : 
\begin{enumerate}[label=$\bf (\overline{H}_p)$,leftmargin=1.5cm]
\item\label{constraint_g_holder}\ $[g^\prime]_p := \sup_{a,b\in \R \atop a\neq b} \frac{\vert g^\prime(a)-g^\prime(b)\vert}{\vert a-b\vert^p}  < +\infty$
\end{enumerate}
meaning, for $p\in (0,1]$, that $g^\prime$ is uniformly H\"older continuous with exponent $p$.
For $p=0$, by extension, we set $[g']_0 :=2\Vert g^\prime\Vert_\infty$.
In particular, $g$ satisfies $\bf (\overline{H}_0)$ if and only if $g\in \mathcal{C}^1(\R)$ and $g^\prime\in L^\infty(\R)$, and $g$ satisfies $\bf (\overline{H}_1)$ if and only if $g^\prime$ is Lipschitz continuous (in this case, $g^\prime$ is almost everywhere differentiable and $g^{\prime\prime}\in L^\infty(\R)$), and we have $[g^\prime]_1\leq \Vert g^{\prime\prime}\Vert_\infty$.

We also denote by $C$ a positive constant depending only on $\Omega$ and $T$ that may vary from lines to lines.

\section{A controllability result for a linearized heat equation with $L^2$ right hand side}\label{sec_control_linearized}

This section is devoted to a controllability result for a linear heat equation with potential in $A\in L^\infty(Q_T)$ and right hand side $B\in L^2(\rho_0(s),Q_T)$  for a precise weight $\rho_0(s)$ parametrized by $s\in \mathbb{R^\star_+}$ defined in the sequel. More precisely we are interested by the existence of a control $v$ such that the solution $z$ of 
\begin{equation}
\label{heat_z}
\left\{
\begin{aligned}
&\partial_t z - \partial_{xx} z +  A z  = v 1_{\omega} + B \quad \textrm{in}\quad Q_T,\\
& z=0 \,\, \textrm{on}\,\, \Sigma_T, \quad z(\cdot,0)=z_0 \,\, \textrm{in}\,\, \Omega
\end{aligned}
\right.
\end{equation}
satisfies
\begin{equation}
\label{heat_z1}
z(\cdot,T)=0  \hbox{ in }\Omega.
\end{equation}

As this work concerns the null controllability of parabolic equation, 
we make use  of Carleman weights introduced in this context in \cite{fursikov-imanuvilov}). For any $s\geq 0$, we consider the weight functions $\rho(s)=\rho(x,t,s)$, $\rho_0(s)=\rho_0(x,t,s)$ and $\rho_1(s)=\rho_1(x,t,s)$  which are continuous, strictly positives and in $L^\infty(Q_{T-\delta})$ for any $\delta>0$. Precisely,  we use  the weights introduced in \cite{MB-SE-SG}: $\rho_0(s)=\xi^{-3/2} \rho(s)$, $\rho_1(s)=\xi^{-1}\rho(s)$  where $\rho(s)$ and $\xi$ are defined, for all $s\ge 1$ and $\lambda\ge 1$, as follows
  \begin{equation}\label{def_weight}
  \rho(x,t,s)=\exp\Big(s\varphi(x,t)\Big), \quad \xi(x,t)=\theta(t)\exp(\lambda \widehat{\psi}(x)) 
  \end{equation}
  where $\theta\in \mathcal{C}^2([0,T))$ is defined such that, noting $\mu=s\lambda^2e^{2\lambda}$ and $0<T_1<\min(\frac{1}{4},\frac{3T}{8})$,
   \begin{equation}\label{def_xi}
   \theta(t)=\left\{\begin{aligned}
   &1+\biggl(1-\frac{4t}T\biggr)^\mu \quad\forall t\in[0,T/4]\\
   & 1\quad \forall t\in[T/4,T-2T_1]\\
   &\theta \hbox{ is increasing on }[T-2T_1,T-T_1],   \\
   &\frac{1}{T-t}\quad\forall t\in [T-T_1,T)
   \end{aligned}\right.
\end{equation}
and $\varphi\in \mathcal{C}^1([0,T))$ is defined by
\begin{equation}
\varphi(x,t)=\theta(t)\big(\lambda  \exp(12\lambda )-\exp(\lambda \widehat{\psi}(x))\big) \label{def_varphi}
\end{equation}
with  $\widehat{\psi}=\widetilde\psi+6$, where $\widetilde\psi\in \mathcal{C}^1(\overline{\Omega})$ satisfies $\widetilde\psi\in (0,1)$ in $\Omega$, $\widetilde\psi=0$ on $\partial\Omega$ and $|\partial_x\widetilde\psi(x)|>0$ in $\overline{\Omega\backslash\omega}$. We emphasize that the weights blow up at $t\to T^-$. 

\begin{remark}\label{remark-rho0}
We shall use in the sequel that $1< \rho_0\le \rho_1\le\rho$. Indeed, since $\xi\ge 1$, $ \rho_0\le \rho_1\le\rho$.  Moreover, for all $(x,t)\in Q_T$ and $\lambda\ge 1$, we check that 
$\varphi(x,t)\ge \frac32 \xi(x,t)$
and thus, since $s\ge 1$ and $\xi(x,t)\ge 1$, we get
$$
\rho_0(x,t,s)=\xi^{-3/2}(x,t) \rho(x,t,s)\ge \xi^{-3/2}(x,t) \exp\Big(\frac{3}2s\xi(x,t)\Big)  \ge e^{3/2s} \quad \forall (x,t)\in Q_T.
$$
\end{remark}

The controllability property for the linear system \eqref{heat_z} is based on the following Carleman estimate.

\begin{lemma}\label{carleman}
There exists  $\lambda_0\ge1$  and $s_0\ge 1$ such for all $\lambda\ge \lambda_0$ and for all $s \ge \max( \|A\|^{2/3}_{L^\infty(Q_T)},s_0)$  one has the following Carleman estimate, for all $p\in P_0=\{q\in C^2(\overline{Q_T})\ : \ q=0\hbox{ on } \Sigma_T\}$:
\begin{equation}\label{Carleman-ine1} 
\begin{aligned}
\int_\Omega\rho^{-2}(0,s)|\partial_x p(0)|^2
&+s^3\lambda^4 e^{14\lambda}\int_\Omega \rho^{-2}(0,s) |p(0)|^2
+s\lambda^2\int_{Q_T} \rho_1^{-2}(s) |\partial_x p|^2 
+s^3\lambda^4\int_{Q_T} \rho_0^{-2}(s) | p|^2   \\
&\le 
C\int_{Q_T}\rho^{-2}(s)|-\partial_t p-\partial_{xx}p+ Ap|^2+ Cs^3\lambda^4\int_{q_T}\rho_0^{-2}(s)|p|^2.
\end{aligned}
\end{equation}
\end{lemma}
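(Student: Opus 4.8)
The plan is to deduce \eqref{Carleman-ine1} from the corresponding global Carleman inequality for the \emph{free} heat operator $L_0^\star p:=-\partial_t p-\partial_{xx}p$ (the case $A\equiv0$), and then to absorb the zero-order perturbation $Ap$ using the threshold $s\ge\|A\|_{L^\infty(Q_T)}^{2/3}$. Concretely, the starting point is the statement: there exist $\lambda_1\ge1$, $s_1\ge1$ and $C>0$ such that for all $\lambda\ge\lambda_1$, $s\ge s_1$ and $p\in P_0$,
\begin{align*}
\int_\Omega\rho^{-2}(0,s)|\partial_x p(0)|^2 & +s^3\lambda^4 e^{14\lambda}\int_\Omega\rho^{-2}(0,s)|p(0)|^2+s\lambda^2\int_{Q_T}\rho_1^{-2}(s)|\partial_x p|^2+s^3\lambda^4\int_{Q_T}\rho_0^{-2}(s)|p|^2 \\
& \le C\int_{Q_T}\rho^{-2}(s)|L_0^\star p|^2+Cs^3\lambda^4\int_{q_T}\rho_0^{-2}(s)|p|^2 .
\end{align*}
This is the version of the Fursikov--Imanuvilov Carleman estimate \cite{fursikov-imanuvilov} adapted to the weights \eqref{def_weight}--\eqref{def_varphi}; I would either invoke it directly from \cite{MB-SE-SG} (no density step being needed, since the statement already restricts to $p\in P_0$) or reproduce its classical proof.

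For completeness I would recall in one paragraph how that free estimate is obtained, since essentially all the technical weight of the lemma sits there: one works with $q:=\rho^{-1}p=e^{-s\varphi}p$, rewrites the weighted operator $\rho^{-1}L_0^\star(\rho q)$ as the sum of a formally self-adjoint and a skew-adjoint operator in $q$, squares and integrates the resulting identity over $Q_T$, and expands the cross term by integrating by parts in $x$ and $t$. The choice of $\widetilde\psi$ with $|\partial_x\widetilde\psi|>0$ on $\overline{\Omega\setminus\omega}$ makes the dominant terms $s^3\lambda^4\int_{Q_T}\rho_0^{-2}|p|^2$ and $s\lambda^2\int_{Q_T}\rho_1^{-2}|\partial_x p|^2$ appear with a positive sign up to a remainder localized in $q_T$, all lower-order contributions being absorbed once $\lambda\ge\lambda_1$ and $s\ge s_1$ (this is where the $\lambda$-dependent constants $e^{12\lambda},e^{14\lambda}$ built from $\widehat\psi=\widetilde\psi+6$ enter). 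The two terms at $t=0$ arise because, unlike the classical weight blowing up at both endpoints, here $\theta$ is bounded on $[0,T/4]$, so $\rho(\cdot,\cdot,s)$ is bounded above and below there and the time-boundary contributions of the integrations by parts at $t=0$ are kept, and then combined with a short forward parabolic energy (dissipation) estimate on $[0,T/4]$ to bound $\|p(0)\|$ and $\|\partial_x p(0)\|$ by the interior Carleman norms.

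The point specific to Lemma \ref{carleman} is the absorption of the potential. Fix $p\in P_0$ and set $h:=-\partial_t p-\partial_{xx}p+Ap$, so that $L_0^\star p=h-Ap$ and
\[
\int_{Q_T}\rho^{-2}(s)|L_0^\star p|^2\le 2\int_{Q_T}\rho^{-2}(s)|h|^2+2\|A\|_{L^\infty(Q_T)}^2\int_{Q_T}\rho^{-2}(s)|p|^2 .
\]
Since $\xi\ge1$ we have $\rho^{-2}(s)=\xi^{-3}\rho_0^{-2}(s)\le\rho_0^{-2}(s)$ (Remark \ref{remark-rho0}), hence the last term is at most $2\|A\|_{L^\infty(Q_T)}^2\int_{Q_T}\rho_0^{-2}(s)|p|^2$. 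Plugging this into the free estimate, the term $2C\|A\|_{L^\infty(Q_T)}^2\int_{Q_T}\rho_0^{-2}(s)|p|^2$ produced on the right-hand side is absorbed into the left-hand side term $s^3\lambda^4\int_{Q_T}\rho_0^{-2}(s)|p|^2$ as soon as $s^3\lambda^4\ge 4C\|A\|_{L^\infty(Q_T)}^2$; and since $s\ge\|A\|_{L^\infty(Q_T)}^{2/3}$ gives $s^3\ge\|A\|_{L^\infty(Q_T)}^2$, it suffices to set $\lambda_0:=\max(\lambda_1,(4C)^{1/4})$ and $s_0:=s_1$. Keeping the remaining (nonnegative) left-hand side terms and renaming constants, one gets exactly \eqref{Carleman-ine1} for all $\lambda\ge\lambda_0$ and $s\ge\max(\|A\|_{L^\infty(Q_T)}^{2/3},s_0)$, with $-\partial_t p-\partial_{xx}p+Ap$ on the right-hand side.

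The main obstacle is the free Carleman estimate itself — tracking the precise powers $s^3\lambda^4$, $s\lambda^2$ and, above all, the two $t=0$ boundary terms $\int_\Omega\rho^{-2}(0,s)|\partial_x p(0)|^2$ and $s^3\lambda^4e^{14\lambda}\int_\Omega\rho^{-2}(0,s)|p(0)|^2$ — which is exactly the content of \cite{MB-SE-SG} and which I would cite rather than redo. The potential absorption, by contrast, is a two-line computation, but the exponent $2/3$ is not incidental: the perturbation contributes a quantity scaling like $\|A\|_{L^\infty(Q_T)}^2$ that must be dominated by the Carleman gain $s^3\lambda^4$, and this quadratic-versus-cubic matching is precisely what forces the admissible range $s\gtrsim\|A\|_{L^\infty(Q_T)}^{2/3}$ — the feature that, together with the logarithmic growth of $g^\prime$, will later keep the observability constant under control.
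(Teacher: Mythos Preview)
Your proposal is correct and follows essentially the same route as the paper: invoke the free Carleman estimate of \cite[Theorem~2.5]{MB-SE-SG}, write $|L_0^\star p|^2\le 2|-\partial_t p-\partial_{xx}p+Ap|^2+2\|A\|_{L^\infty}^2|p|^2$, use $\rho^{-2}\le\rho_0^{-2}$, and absorb the potential term by choosing $\lambda_0=\max(\lambda_1,(cC)^{1/4})$ together with $s\ge\|A\|_{L^\infty}^{2/3}$. The only differences are cosmetic --- the paper cites the free estimate without the proof sketch you added, and tracks the absorption constant slightly differently ($(2C)^{1/4}$ rather than your $(4C)^{1/4}$), neither of which affects the argument.
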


\begin{proof} This estimate is deduced from the one obtained in \cite[Theorem 2.5]{MB-SE-SG} devoted to the case $A\equiv 0$: there exist $\lambda_1\ge 1$ and $s_0\ge 1$ such that for all smooth functions $z$ on $\overline{Q_T}$ satisfying $z=0$ on $\Sigma_T$ and for all $s\ge s_0$ and $\lambda\ge \lambda_1$ 
$$\begin{aligned}
\int_\Omega\rho^{-2}(0,s)|\partial_x p(0)|^2
&+s^3\lambda^4 e^{14\lambda}\int_\Omega \rho^{-2}(0,s) |p(0)|^2
+s\lambda^2\int_{Q_T} \rho_1^{-2}(s) |\partial_x p|^2 
+s^3\lambda^4\int_{Q_T} \rho_0^{-2}(s) | p|^2   \\
&\le 
C\int_{Q_T}\rho^{-2}(s)|\partial_t p+\partial_{xx}p|^2+ Cs^3\lambda^4\int_{q_T}\rho_0^{-2}(s)|p|^2.
\end{aligned}
$$
Writing that
$$\begin{aligned}
\int_{Q_T}\rho^{-2}(s)|\partial_t p+\partial_{xx}p|^2
&\le 2\int_{Q_T}\rho^{-2}(s)|-\partial_t p-\partial_{xx}p+ Ap|^2+2\int_{Q_T}\rho^{-2}(s)| A p|^2\\
&\le 2\int_{Q_T}\rho^{-2}(s)|-\partial_t p-\partial_{xx}p+ Ap|^2+2\|A\|_{L^\infty(Q_T)}^2\int_{Q_T}\rho^{-2}(s)| p|^2
\end{aligned}$$
we infer, since $\rho_0\le\rho$ 
$$\begin{aligned}
&\int_\Omega\rho^{-2}(0,s)|\partial_x p(0)|^2
+s^3\lambda^4 e^{14\lambda}\int_\Omega \rho^{-2}(0,s) |p(0)|^2
+s\lambda^2\int_{Q_T} \rho_1^{-2}(s) |\partial_x p|^2 
+s^3\lambda^4\int_{Q_T} \rho_0^{-2}(s) | p|^2   \\
&\le 
C\int_{Q_T}\rho^{-2}(s)|-\partial_t p-\partial_{xx}p+ Ap|^2+C\|A\|_{L^\infty(Q_T)}^2\int_{Q_T}\rho_0^{-2}(s)| p|^2 + Cs^3\lambda^4\int_{q_T}\rho_0^{-2}(s)|p|^2.
\end{aligned}
$$

Taking $\lambda  \ge \lambda_0=\max(\lambda_1,(2C)^{1/4})$ and $s \ge \max(\|A\|^{2/3}_{L^\infty(Q_T)},s_0)$ leads to (\ref{Carleman-ine1}).
\end{proof} 

In the sequel we assume that $\lambda=\lambda_0$ and denote by $C$ any constant depending only on $\Omega$, $\omega$, $\lambda_0$  and $T$.

\begin{theorem}\label{controllability_result}
Assume $A\in L^{\infty}(Q_T)$, $s\ge\max( \|A\|^{2/3}_{L^\infty(Q_T)},s_0)$, $B\in L^2(\rho_0(s),Q_T)$ and $z_0\in L^2(\Omega)$. Then there exists a control $v\in L^2(\rho_0(s),q_T)$ such that the weak solution $z$ of  (\ref{heat_z})
satisfies  (\ref{heat_z1}).

  Moreover, the unique control $v$ which minimizes together with the corresponding solution $z$ the functional 
$J:L^2(\rho(s),Q_T)\times L^2(\rho_0(s),q_T)\to \mathbb{R}^+$ defined by $J(z,v):=\frac12\Vert \rho(s)\,  z\Vert^2_{L^2(Q_T)} + \frac12\Vert \rho_0(s)\,  v\Vert^2_{L^2(q_T)}$
satisfies the following  estimates
\begin{equation}\label{estimation1}
\Vert \rho(s)\, z\Vert_{L^2(Q_T)}+ \Vert \rho_0(s)\,v\Vert_{L^2(q_T)} \leq  Cs^{-3/2} \big(\|\rho_0(s)B\|_{L^2(Q_T)} +e^{cs }\|z_0\|_2 \big)
\end{equation}
with $c:=\Vert \varphi(\cdot,0)\Vert_\infty$ and
\begin{equation}\label{estimation2}
\Vert \rho_1(s) z\Vert_{L^\infty(0,T;L^2(\Omega))}+ \Vert \rho_1(s)\partial_x z\Vert_{L^2(Q_T)}\le 
C_1(s,A)\big(  \|\rho_0(s) B\|_{L^2(Q_T)}+e^{cs }\|z_0\|_2 \big)
  \end{equation}
  where 
$$
C_1(s,A):= Cs^{-1/2}(1 +\|A\|_{L^\infty(Q_T)}^{1/2}).
$$
Moreover, if $z_0\in H^1_0(\Omega)$ then $z\in L^\infty(Q_T)$ and
\begin{equation}\label{estimation1bis} 
\|z\|_{L^\infty(Q_T)} \le Ce^{-\frac32s}(1+\|A\|_{L^\infty(Q_T)})\big(\|\rho_0(s)B\|_{L^2(Q_T)}+e^{cs}\|z_0\|_{H^1_0(\Omega)}\big).  
\end{equation}
\end{theorem}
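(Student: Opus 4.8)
The plan is to obtain the control by the classical penalized HUM / duality approach built on the Carleman estimate of Lemma \ref{carleman}. First I would introduce, for $\varepsilon>0$, the functional
$$
J_\varepsilon^\star(p_T) := \frac12\int_{q_T}\rho_0^{-2}(s)|p|^2 + \frac12\int_{Q_T}\rho^{-2}(s)|p|^2\,\Big(\text{source-adjoint term}\Big) + \frac{\varepsilon}{2}\|p_T\|_2^2 + \langle z_0,p(\cdot,0)\rangle + \int_{Q_T}Bp,
$$
more precisely the dual functional whose Euler--Lagrange equation produces a control $v_\varepsilon = -\rho_0^{-2}(s)\,p_\varepsilon\,1_\omega$ and a state $z_\varepsilon$ with $\rho(s)z_\varepsilon = -\rho^{-2}(s)p_\varepsilon$ in an appropriate weak sense, where $p_\varepsilon$ solves the backward adjoint heat equation $-\partial_t p-\partial_{xx}p+Ap=0$ with data $p_\varepsilon(\cdot,T)=p_T$. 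The Carleman inequality \eqref{Carleman-ine1}, applied to $p_\varepsilon$ and combined with Cauchy--Schwarz on the terms $\langle z_0,p(\cdot,0)\rangle$ (controlled by $e^{cs}\|z_0\|_2$ times $\int_\Omega\rho^{-2}(0,s)|p(0)|^2$, using $\rho^{-2}(0,s)\le e^{2cs}\cdot$ nothing, i.e. $c=\|\varphi(\cdot,0)\|_\infty$) and $\int_{Q_T}Bp$ (controlled by $\|\rho_0(s)B\|_{L^2(Q_T)}\,\|\rho_0^{-1}(s)p\|_{L^2(Q_T)}$), gives coercivity of $J_\varepsilon^\star$ uniformly in $\varepsilon$ and hence, after passing to the limit $\varepsilon\to0$, the existence of $(z,v)$ satisfying \eqref{heat_z}--\eqref{heat_z1} together with the bound \eqref{estimation1}: the factor $s^{-3/2}$ comes precisely from the $s^3\lambda^4$ coefficient in front of $\int_{Q_T}\rho_0^{-2}(s)|p|^2$ and $\int_\Omega\rho^{-2}(0,s)|p(0)|^2$ in \eqref{Carleman-ine1}. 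Uniqueness of the minimizing pair follows from strict convexity of $J$.

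For \eqref{estimation2} I would use $(z,v)$ as a test pair in the weak formulation of \eqref{heat_z}, i.e. perform the standard energy estimate on $z$ itself: multiply the equation by $\rho_1^2(s)z$ (or rather by the time-truncated weight, since $\rho_1$ blows up at $T$) and integrate over $Q_t$. The terms $\int\partial_t z\,\rho_1^2 z$ and $\int|\partial_x(\rho_1 z)|^2$ produce the left-hand side of \eqref{estimation2}; the commutator terms involving $\partial_t\rho_1$ and $\partial_x\rho_1$ are absorbed using the pointwise bounds on the Carleman weights (here one needs $|\partial_t\rho_1|\lesssim \rho_0^{-1}\cdot(\ldots)$ type inequalities, i.e. that the weight ratios are controlled — this is where the precise construction \eqref{def_weight}--\eqref{def_varphi} of $\rho,\rho_0,\rho_1$ and Remark \ref{remark-rho0} enter); the potential term $\int A z\,\rho_1^2 z$ contributes $\|A\|_{L^\infty}\|\rho_1 z\|_{L^2}^2$ which is handled either by Gronwall or absorbed, yielding the $(1+\|A\|_{L^\infty}^{1/2})$ dependence in $C_1(s,A)$; and the right-hand side source $\int (v1_\omega+B)\rho_1^2 z$ is bounded by $\|\rho_0(s)(v1_\omega+B)\|_{L^2}\|\rho_0^{-1}\rho_1^2 z\|_{L^2}$, controlled through \eqref{estimation1} and $\rho_0\le\rho_1\le\rho$. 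The extra $s^{-1/2}$ in $C_1$ should come from optimizing these absorptions against the $s$-powers already gained in \eqref{estimation1}.

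Finally, for \eqref{estimation1bis} with $z_0\in H^1_0(\Omega)$, I would bootstrap: from \eqref{estimation2} one has $\rho_1 z\in L^\infty(0,T;L^2)\cap$ (with $\rho_1\partial_x z\in L^2(Q_T)$), hence $z\in L^2(0,T;H^1_0)$ with a weighted bound, and then parabolic regularity for \eqref{heat_z} (equation $\partial_t z-\partial_{xx}z = -Az+v1_\omega+B$ with the already-established $L^2$-type bounds on the right-hand side and $z_0\in H^1_0$) gives $z\in L^\infty(0,T;H^1_0)\cap L^2(0,T;H^2)\hookrightarrow L^\infty(Q_T)$ in one space dimension by Sobolev embedding; tracking the $s$-dependence of the weights through this regularity argument, using $\rho_0\ge e^{3s/2}$ from Remark \ref{remark-rho0} to convert weighted norms into the unweighted $L^\infty(Q_T)$ norm, produces the prefactor $Ce^{-3s/2}(1+\|A\|_{L^\infty})$. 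I expect the main obstacle to be the careful handling in \eqref{estimation2} of the commutators between the blowing-up Carleman weight $\rho_1$ and the time derivative near $t=T$, and the bookkeeping of the exact powers of $s$ — in particular verifying that the pointwise weight inequalities $|\partial_t\rho_1(s)|$, $|\partial_x\rho_1(s)|\le C s\,\rho_1(s)\,\xi(x,t)\cdot(\ldots)$ hold with constants uniform for $s\ge s_0$, which is what makes the absorption into the Carleman left-hand side possible; the rest is standard energy estimates and duality.
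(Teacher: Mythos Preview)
Your plan is essentially correct but differs from the paper in the construction step. For the existence of the minimizing control pair and estimate \eqref{estimation1}, the paper does \emph{not} go through a penalized HUM limit $\varepsilon\to 0$ over terminal data $p_T$ of the homogeneous adjoint equation. Instead it works directly in the Fursikov--Imanuvilov style: on the completion $P$ of $P_0$ for the inner product $(p,q)_P=\int_{Q_T}\rho^{-2}(s)L_A^\star p\,L_A^\star q + s^{3}\lambda_0^4\int_{q_T}\rho_0^{-2}(s)pq$, the Riesz representation theorem produces a unique $p\in P$ solving $(p,q)_P=\int_\Omega z_0 q(0)+\int_{Q_T} Bq$ for all $q\in P$, and then one sets explicitly $z=\rho^{-2}(s)L_A^\star p$ and $v=-s^{3/2}\lambda_0^2\rho_0^{-2}(s)p|_{q_T}$. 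This is cleaner than your penalization route (no limit to take) and gives directly that $(z,v)$ is the minimizer of $J$; in particular, your formula ``$\rho(s)z_\varepsilon=-\rho^{-2}(s)p_\varepsilon$'' is not how the state arises in penalized HUM and would need to be replaced by the forward solution with control $v_\varepsilon$. Both approaches ultimately rely on the same Carleman estimate for coercivity and yield the same $s^{-3/2}$ factor, so your outline would succeed once the penalization is written carefully, but the paper's direct variational construction avoids that detour.

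For \eqref{estimation2} and \eqref{estimation1bis} your plan matches the paper closely: the paper multiplies \eqref{heat_z} by $\rho_1^2(s)z$, estimates the commutators $\partial_t\rho_1$ and $\partial_x\rho_1$ pointwise (obtaining $|\partial_t\theta/\theta|\le Cs\xi$ etc.), and --- contrary to your suggestion of Gronwall --- absorbs the potential term via $\int \rho_1^2 Azz\le \|A\|_\infty\|\rho(s)z\|_2^2$, which is already controlled by \eqref{estimation1}; this is how the $\|A\|_{L^\infty}^{1/2}$ dependence appears without any exponential-in-$\|A\|$ loss. No time truncation of the weight is needed: one simply integrates over $(0,t)$ for $t<T$. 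For \eqref{estimation1bis} the paper multiplies by $\partial_{xx}z$ (rather than bootstrapping from \eqref{estimation2}) to get $\partial_t\int_\Omega|\partial_x z|^2\le \|{-}Az+v1_\omega+B\|_2^2$, then uses $\rho_0\ge e^{3s/2}$ and \eqref{estimation1} together with the one-dimensional embedding $\|z(\cdot,t)\|_\infty\le \|\partial_x z(\cdot,t)\|_2$; your parabolic-regularity-plus-embedding route gives the same conclusion.
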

\par\noindent
We refer to \cite{EFC-EZ-lin} for an estimate of the null control of minimal $L^2(q_T)$-norm (corresponding to $\rho_0\equiv 1$ and $\rho=0$) in the case $B\equiv 0$, refined later on in \cite{Duyckaerts,EFC-EZ}. 
Theorem \ref{controllability_result} is based on several technical results. Remark first that the bilinear form 
$$(p,q)_P:=\int_{Q_T}\rho^{-2}(s) L^\star_A p\,L^\star_A q+s^3\lambda_0^4\int_{q_T} \rho_0^{-2}(s)p\,q$$
where $L^\star_A q:=-\partial_t q-\partial_{xx} q+A q$ for all $q\in P_0$ is a scalar product on $P_0$ (see \cite{EFC-MUNCH-SEMA}). The completion $P$  of $P_0$ for the norm $\|\cdot\|_P$ associated with this scalar product is a Hilbert space. By density arguments, (\ref{Carleman-ine1}) remains true for all $p\in P$, that is, for $\lambda=\lambda_0$,   
\begin{equation}\label{Carleman-ine} 
\int_\Omega\rho^{-2}(0,s)|\partial_x p(0)|^2
+s^3\lambda_0^4 e^{14\lambda_0}\int_\Omega \rho^{-2}(0,s) |p(0)|^2
+s\lambda_0^2\int_{Q_T} \rho_1^{-2}(s) |\partial_x p|^2 
+s^3\lambda_0^4\int_{Q_T} \rho_0^{-2}(s) | p|^2   
\le C\|p\|_P^2
\end{equation}
for all $s\ge\max( \|A\|^{2/3}_{L^\infty(Q_T)},s_0)$.

\begin{remark}
We denote by $P$ (instead of $P_A$) the completion of $P_0$ for the norm $\|\cdot\|_P$ since $P$ does not depend on $A$ (see \cite[Lemma 3.1]{EFC-AM}).
\end{remark}

\begin{lemma}\label{solution-p}
Let      $s \ge \max( \|A\|^{2/3}_{L^\infty(Q_T)},s_0) $.
There exists $p\in P$ unique solution of 
\begin{equation}\label{solution-p1}
(p,q)_P=\int_\Omega z_0q(0)+\int_{Q_T}  B q ,\quad \forall q\in P.
\end{equation}
This solution satisfies the following estimate (with $c:=\Vert \varphi(\cdot,0)\Vert_\infty$)
\begin{equation}\label{21}
\|p\|_P\le Cs^{-3/2}\big( \Vert \rho_0(s)\,B\Vert_{L^2(Q_T)} +e^{cs}\|z_0\|_2\big) .
\end{equation}	

\end{lemma}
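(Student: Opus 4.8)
The plan is to apply the Lax–Milgram theorem to the bilinear form $(\cdot,\cdot)_P$ on the Hilbert space $P$, with the linear functional given by the right-hand side of \eqref{solution-p1}, and then to read off the estimate \eqref{21} from the Carleman inequality \eqref{Carleman-ine}. First I would observe that $(\cdot,\cdot)_P$ is by construction the scalar product defining the norm $\|\cdot\|_P$ on $P$, so it is trivially continuous and coercive with constant $1$; hence it remains only to check that the map
$$
q \mapsto \ell(q) := \int_\Omega z_0\, q(0) + \int_{Q_T} B\, q
$$
is a bounded linear functional on $(P,\|\cdot\|_P)$. Boundedness of $\ell$ is exactly where the Carleman estimate enters.

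For the first term, I would write $\int_\Omega z_0\, q(0) = \int_\Omega \bigl(\rho(0,s) z_0\bigr)\bigl(\rho^{-1}(0,s) q(0)\bigr)$ and apply Cauchy–Schwarz to get
$$
\Bigl| \int_\Omega z_0\, q(0) \Bigr| \le \|\rho(0,s) z_0\|_{L^2(\Omega)}\, \|\rho^{-1}(0,s) q(0)\|_{L^2(\Omega)} \le e^{cs}\|z_0\|_{2}\, \|\rho^{-1}(0,s) q(0)\|_{L^2(\Omega)},
$$
using $\rho(x,0,s) = \exp(s\varphi(x,0)) \le e^{cs}$ with $c = \|\varphi(\cdot,0)\|_\infty$. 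By \eqref{Carleman-ine}, the factor $s^3\lambda_0^4 e^{14\lambda_0}\int_\Omega \rho^{-2}(0,s)|q(0)|^2$ is bounded by $C\|q\|_P^2$, so $\|\rho^{-1}(0,s) q(0)\|_{L^2(\Omega)} \le C s^{-3/2}\|q\|_P$ (absorbing the fixed $\lambda_0$ factor into $C$). For the second term, I would write $\int_{Q_T} B\, q = \int_{Q_T}\bigl(\rho_0(s) B\bigr)\bigl(\rho_0^{-1}(s) q\bigr)$ and again use Cauchy–Schwarz together with the term $s^3\lambda_0^4 \int_{Q_T}\rho_0^{-2}(s)|q|^2 \le C\|q\|_P^2$ from \eqref{Carleman-ine}, giving $\bigl|\int_{Q_T} B q\bigr| \le C s^{-3/2}\|\rho_0(s) B\|_{L^2(Q_T)}\,\|q\|_P$. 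Combining, $|\ell(q)| \le C s^{-3/2}\bigl(\|\rho_0(s) B\|_{L^2(Q_T)} + e^{cs}\|z_0\|_2\bigr)\|q\|_P$, so $\ell \in P'$ with the displayed bound on its norm.

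With these two facts in hand, Lax–Milgram yields a unique $p\in P$ solving $(p,q)_P = \ell(q)$ for all $q\in P$, and coercivity with constant $1$ gives $\|p\|_P^2 = (p,p)_P = \ell(p) \le \|\ell\|_{P'}\|p\|_P$, whence $\|p\|_P \le \|\ell\|_{P'} \le C s^{-3/2}\bigl(\|\rho_0(s) B\|_{L^2(Q_T)} + e^{cs}\|z_0\|_2\bigr)$, which is precisely \eqref{21}. The only subtle point — the main obstacle, such as it is — is making sure the constant $C$ in \eqref{21} is genuinely independent of $A$: this is why the paper records in the preceding remark that the space $P$ and its norm do not depend on $A$, and why \eqref{Carleman-ine} holds with a constant $C$ that depends only on $\Omega$, $\omega$, $\lambda_0$ and $T$ once $s$ is taken above the threshold $\max(\|A\|^{2/3}_{L^\infty(Q_T)}, s_0)$; the $A$-dependence is entirely absorbed into the admissible range of $s$, not into $C$. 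Beyond that, the argument is a routine application of the Riesz–Fréchet/Lax–Milgram machinery, exactly as in the classical penalized HUM construction of Fursikov–Imanuvilov.
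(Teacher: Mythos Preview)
Your proof is correct and follows essentially the same route as the paper: bound each of the two terms of the linear functional via Cauchy--Schwarz and the Carleman estimate \eqref{Carleman-ine}, then invoke Riesz (equivalently, Lax--Milgram with the scalar product itself as bilinear form) to obtain existence, uniqueness, and the bound \eqref{21}. One minor caveat in your closing remark: the norm $\|\cdot\|_P$ does depend on $A$ through $L^\star_A$ in the bilinear form---what the paper's preceding remark asserts is only that the completion $P$ (as a set) is independent of $A$, the norms for different $A$ being equivalent; but this does not affect your argument, since the relevant Carleman constant $C$ is genuinely $A$-independent once $s$ is above the threshold.
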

\begin{proof}  The linear  map $L_1: P\to\R$, $q\mapsto \int_{Q_T}   Bq$ is continuous. Indeed, for all $q\in P$
$$\Big|\int_{Q_T} Bq\Big|\le \Big(\int_{Q_T}|\rho_0(s)B|^2\Big)^{1/2}
\Big(\int_{Q_T} |\rho_0^{-1}(s)q|^2\Big)^{1/2}$$
and since from the Carleman estimate (\ref{Carleman-ine}) we have
$\displaystyle \Big(\int_{Q_T} |\rho_0^{-1}(s)q|^2\Big)^{1/2}\le
 Cs^{-3/2} \|q\|_P,$
therefore
$$|L_1(q)|=\Big|\int_{Q_T} Bq\Big|\le   Cs^{-3/2} \|\rho_0(s)B\|_{L^2(Q_T)} \|q\|_P.$$
Thus $L_1$ is continuous.

From (\ref{Carleman-ine}) we deduce that the linear map $L_2:  P\to\R$, $q\mapsto \int_\Omega z_0q(0)$ is continuous. Indeed, noting $c:=\Vert \varphi(\cdot,0)\Vert_\infty$ and using $s\ge 1$, we obtain for all $q\in P$ that:
$$\begin{aligned}
|L_2(q)|
&=  s^{-3/2} e^{cs}\|z_0\|_2s^{3/2} e^{ -cs}\|q(0)\|_2\\
&\le  s^{-3/2} e^{cs }\|z_0\|_2s^{3/2}\|q(0)e^{-s\varphi(x,0)}\|_2=  s^{-3/2} e^{cs }\|z_0\|_2s^{3/2}\|\rho^{-1}(0,s) q(0)\|_2 \\
&\le C s^{-3/2}e^{cs}\|z_0\|_2\|q\|_P.
\end{aligned}
$$

Using Riesz's theorem, we conclude that there exists exactly one solution $p\in P$ of  (\ref{solution-p1}) and this solution satisfies (\ref{21}).
\end{proof} 

Let us now introduce the convex set
$$
\mathcal{C}(z_0,T):=\biggl\{(z,v) :  \rho(s) z\in L^2(Q_T),\  \rho_0(s)v\in L^2(q_T),\\
\ (z,v) \hbox{ solves  (\ref{heat_z})-(\ref{heat_z1})  in the transposition sense}\biggr\}
$$
that is  $(z,v)$ is solution of
\begin{equation}\label{23}
\int_{Q_T}zL^\star_A q=\int_{q_T} vq+\int_\Omega z_0q(0)+\int_{Q_T}  Bq,\quad \forall q\in P.
\end{equation}
Let us remark that if $(z,v)\in \mathcal{C}(z_0,T)$, then since   $v\in L^2(q_T)$ and $B\in L^2(Q_T)$, $z$ coincides with the unique weak solution of  (\ref{heat_z}) associated with $v$. We can now claim that $\mathcal{C}(z_0,T)$ is non empty. Indeed we have :

\begin{lemma}\label{solution-controle}
Let      $s \ge \max( \|A\|^{2/3}_{L^\infty(Q_T)},s_0) $,  $p\in P$ the unique solution of (\ref{solution-p1}) given in Lemma \ref{solution-p} and $(z,v)$  defined by
\begin{equation}\label{zv}
z=\rho^{-2}(s)L^\star_A p\quad \hbox{ and }\quad v=-s^{3/2}\lambda_0^2\rho_0^{-2}(s) p|_{q_T}.\end{equation}
Then $(z,v)\in \mathcal{C}(z_0,T)$ and satisfies the following estimate (with $c:=\Vert \varphi(0,\cdot)\Vert_\infty$)
\begin{equation}\label{estimation-z-v1}
\Vert \rho(s)\, z\Vert_{L^2(Q_T)}+ \Vert \rho_0(s)\,v\Vert_{L^2(q_T)} \leq C s^{-3/2}\big(\|\rho_0(s)B\|_{L^2(Q_T)}+ e^{cs}\|z_0\|_2 \big). 
\end{equation}
\end{lemma}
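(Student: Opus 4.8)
The plan is to verify directly that the pair $(z,v)$ defined by \eqref{zv} lies in $\mathcal{C}(z_0,T)$, i.e.\ that it satisfies the transposition identity \eqref{23}, and then to read off \eqref{estimation-z-v1} from the bound \eqref{21} on $\|p\|_P$. For the first point, take any $q\in P$ and compute $\int_{Q_T} z\, L^\star_A q = \int_{Q_T} \rho^{-2}(s)\, L^\star_A p\, L^\star_A q$. By the very definition of the scalar product $(\cdot,\cdot)_P$, this equals $(p,q)_P - s^3\lambda_0^4\int_{q_T}\rho_0^{-2}(s)\, p\, q$. Using the variational characterization \eqref{solution-p1} of $p$, the first term is $\int_\Omega z_0 q(0) + \int_{Q_T} B q$, while the second term is exactly $-\int_{q_T} v q$ in view of the definition $v = -s^{3/2}\lambda_0^2 \rho_0^{-2}(s)\, p|_{q_T}$ (note $s^3\lambda_0^4 = (s^{3/2}\lambda_0^2)^2$, but only one factor is absorbed into $v$ and the remaining $s^{3/2}\lambda_0^2$ must be tracked; here the cleanest route is to write $s^3\lambda_0^4 \rho_0^{-2}(s)\, p\, q = -s^{3/2}\lambda_0^2\, v\, q$... actually $-v = s^{3/2}\lambda_0^2\rho_0^{-2}(s)p$ so $s^3\lambda_0^4\rho_0^{-2}(s)pq = s^{3/2}\lambda_0^2(-v)q = -s^{3/2}\lambda_0^2 v q$; to land on $\int_{q_T} vq$ exactly one should instead set $v = -s^3\lambda_0^4\rho_0^{-2}(s)p|_{q_T}$, and I expect the displayed definition in \eqref{zv} to be consistent with the normalization actually used — I will simply follow whichever normalization makes the identity close, the point being that $v$ is a fixed multiple of $\rho_0^{-2}(s)p$ on $q_T$). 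Rearranging gives precisely \eqref{23}, so $(z,v)$ solves \eqref{heat_z}--\eqref{heat_z1} in the transposition sense.

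Next I must check the integrability requirements defining $\mathcal{C}(z_0,T)$, namely $\rho(s) z\in L^2(Q_T)$ and $\rho_0(s)v\in L^2(q_T)$. This is immediate from the definitions: $\rho(s) z = \rho^{-1}(s) L^\star_A p$, so $\|\rho(s) z\|_{L^2(Q_T)}^2 = \int_{Q_T}\rho^{-2}(s)|L^\star_A p|^2 \le \|p\|_P^2$; and $\rho_0(s) v = -s^{3/2}\lambda_0^2\rho_0^{-1}(s) p|_{q_T}$, so $\|\rho_0(s) v\|_{L^2(q_T)}^2 = s^3\lambda_0^4\int_{q_T}\rho_0^{-2}(s)|p|^2 \le \|p\|_P^2$ (again up to the harmless fixed power of $s$ coming from the chosen normalization of $v$). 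Adding the two and invoking \eqref{21} yields
\[
\Vert \rho(s)\, z\Vert_{L^2(Q_T)}+ \Vert \rho_0(s)\,v\Vert_{L^2(q_T)} \le C\|p\|_P \le C s^{-3/2}\big(\|\rho_0(s)B\|_{L^2(Q_T)}+ e^{cs}\|z_0\|_2 \big),
\]
which is \eqref{estimation-z-v1}. Finally, the remark preceding the lemma already guarantees that, since $v\in L^2(q_T)$ and $B\in L^2(Q_T)$, this transposition solution coincides with the unique weak solution of \eqref{heat_z} associated with $v$, so $(z,v)\in\mathcal{C}(z_0,T)$ genuinely.

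I do not anticipate a serious obstacle here: the lemma is essentially the standard Fursikov–Imanuvilov penalization/duality construction, and all the analytic content has been front-loaded into Lemma \ref{carleman} (through the Carleman estimate \eqref{Carleman-ine}) and Lemma \ref{solution-p} (existence of $p$ and the bound \eqref{21}). The only point requiring a little care is bookkeeping the powers of $s$ and $\lambda_0$ so that the multiplier defining $v$ in \eqref{zv} is exactly the one that makes the $q_T$-term in $(p,q)_P$ reproduce $\int_{q_T} vq$; once that normalization is fixed consistently, everything else is a one-line substitution. A secondary, purely formal point is that \eqref{solution-p1} is stated for $q\in P$ while $L^\star_A q$ a priori only makes sense for $q\in P_0$ — but this is resolved exactly as in the passage from \eqref{Carleman-ine1} to \eqref{Carleman-ine}, by density of $P_0$ in $P$ and continuity of both sides of \eqref{23} in $q$ for the $\|\cdot\|_P$ norm.
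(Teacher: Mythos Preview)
Your proposal is correct and follows essentially the same route as the paper's own proof, which simply notes that the definitions of $z$ and $v$ together with \eqref{solution-p1} yield \eqref{23}, that the required weighted integrability comes for free from $p\in P$, and that \eqref{estimation-z-v1} follows from \eqref{21}. Your instinct about the normalization is well-founded: with the inner product carrying the factor $s^3\lambda_0^4$, the definition of $v$ that makes \eqref{23} close on the nose is $v=-s^{3}\lambda_0^{4}\rho_0^{-2}(s)\,p|_{q_T}$, so the exponent in \eqref{zv} appears to be a typo; the paper's proof does not spell this out, and the estimate \eqref{estimation-z-v1} holds with the constant absorbing the resulting fixed power of $s^{3/2}\lambda_0^{2}$ (recall $\lambda_0$ is fixed).
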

\begin{proof}    From the definition of $P$,  $\rho(s)  z\in L^2(Q_T)$ and $\rho _0(s)v\in L^2(q_T)$ and from the definition of $\rho(s)$ and  $\rho_0(s)$ we have $z\in L^2(Q_T)$ and $v\in L^2(q_T)$. In  view of (\ref{solution-p1}), $(z,v)$ is solution of (\ref{23})
and satisfies (\ref{estimation-z-v1})
that is, $z$ is the solution of  (\ref{heat_z})-(\ref{heat_z1}) associated with $v$ in the transposition sense. Thus $(z,v)\in \mathcal{C}(z_0,T)$. 
\end{proof}

Let us now consider the following extremal problem, introduced by Fursikov and Imanuvilov \cite{fursikov-imanuvilov}
\begin{equation}
\left\{\begin{aligned}
\label{extrema-problem}
&\hbox{ Minimize } J(z,v)=\frac12\int_{Q_T}\rho^2(s)|z|^2+\frac12\int_{q_T}\rho_0^2(s)|v|^2\\
&\hbox{ Subject to } (z,v)\in \mathcal{C}(z_0,T).
\end{aligned}\right.
\end{equation}
Then $(z,v)\mapsto J(z,v)$ is strictly convex and continuous on $L^2(\rho^2(s);Q_T)\times L^2(\rho_0^2(s); q_T)$. Therefore  (\ref{extrema-problem}) possesses at most a  solution in $\mathcal{C}(z_0,T)$. More precisely we have :

\begin{prop}\label{minimal-control}
Let      $s \ge \max( \|A\|^{2/3}_{L^\infty(Q_T)},s_0) $. 
 Then $(z,v)\in \mathcal{C}(z_0,T)$ defined in Lemma \ref{solution-controle} is the unique solution of (\ref{extrema-problem}).
\end{prop}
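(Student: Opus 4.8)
The plan is to prove Proposition \ref{minimal-control} by a duality argument, showing that the pair $(z,v)$ constructed in Lemma \ref{solution-controle} from the Fursikov--Imanuvilov dual variable $p$ is precisely the one satisfying the Euler--Lagrange optimality conditions for the strictly convex problem \eqref{extrema-problem}. Since strict convexity and continuity of $J$ on $L^2(\rho^2(s);Q_T)\times L^2(\rho_0^2(s);q_T)$ already guarantee at most one minimizer over the (nonempty, by Lemma \ref{solution-controle}) closed convex set $\mathcal{C}(z_0,T)$, it suffices to check that $(z,v)$ from \eqref{zv} satisfies the first-order condition: for every admissible variation $(\bar z,\bar v)$ with $(z+\bar z, v+\bar v)\in \mathcal{C}(z_0,T)$, one has
$$
\int_{Q_T}\rho^2(s)\, z\,\bar z + \int_{q_T}\rho_0^2(s)\, v\,\bar v = 0.
$$

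First I would observe that if $(z,v)$ and $(z+\bar z,v+\bar v)$ both lie in $\mathcal{C}(z_0,T)$, then by subtracting the two identities \eqref{23}, the difference $(\bar z,\bar v)$ satisfies the homogeneous relation
$$
\int_{Q_T}\bar z\, L^\star_A q = \int_{q_T}\bar v\, q, \qquad \forall q\in P.
$$
Then I would substitute $q=p$, the solution of \eqref{solution-p1}, which is legitimate since $p\in P$. Using the explicit formulas \eqref{zv}, namely $z=\rho^{-2}(s)L^\star_A p$ and $v=-s^{3/2}\lambda_0^2\rho_0^{-2}(s)\,p|_{q_T}$, we get $\rho^2(s)\,z = L^\star_A p$ and $\rho_0^2(s)\,v = -s^{3/2}\lambda_0^2\, p|_{q_T}$, so that
$$
\int_{Q_T}\rho^2(s)\, z\,\bar z + \int_{q_T}\rho_0^2(s)\, v\,\bar v
= \int_{Q_T} \bar z\, L^\star_A p - s^{3/2}\lambda_0^2\int_{q_T} \bar v\, p = 0
$$
by the homogeneous relation above — up to tracking the constant $s^{3/2}\lambda_0^2$ correctly against the normalization of the $P$-inner product, which only rescales the control term and does not affect the vanishing. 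This establishes the optimality condition, hence $(z,v)$ is the unique solution of \eqref{extrema-problem}.

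The one point requiring a little care — and the place where I would expect the only real friction — is the justification that $q=p$ is an admissible test function and that the integrations by parts implicit in passing between the transposition formulation \eqref{23} and the $P$-inner product are valid; this rests on the density of $P_0$ in $P$, on the fact (recalled in the Remark following \eqref{Carleman-ine}) that $P$ is independent of $A$, and on the Carleman estimate \eqref{Carleman-ine} guaranteeing that all the weighted integrals appearing are finite. These are exactly the ingredients already assembled in Lemmas \ref{solution-p} and \ref{solution-controle}, so the argument closes without new estimates.
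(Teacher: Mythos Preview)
Your proof is correct and takes essentially the same route as the paper: both substitute $\rho^2(s)z=L^\star_A p$ and $\rho_0^2(s)v\propto -p$ and use the transposition identity for the difference of two admissible pairs to make the first-order term vanish, the paper packaging this via the convexity inequality $J(y,w)\ge J(z,v)+\langle J'(z,v),(y-z,w-v)\rangle$ while you phrase it as checking the Euler--Lagrange condition on the affine set $\mathcal C(z_0,T)$ and invoking strict convexity. Your caveat about the constant $s^{3/2}\lambda_0^2$ versus the $s^3\lambda_0^4$ appearing in the $P$-inner product is well taken---the paper's own proof silently drops this factor too, and the argument is consistent once the normalization in \eqref{zv} is matched to that of $(\cdot,\cdot)_P$.
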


\begin{proof}  Let $(y,w)\in \mathcal{C}(z_0,T)$. Since $J$ is  convex and differentiable on $L^2(\rho^2(s);Q_T)\times L^2(\rho_0^2(s); q_T)$ we have :
$$\begin{aligned}
J(y,w)
&\ge J(z,v)+\int_{Q_T} \rho^2(s)z(y-z)+\int_{q_T}\rho_0^2(s)v(w-v)\\
&=J(z,v)+\int_{Q_T} L^\star p (y-z)-\int_{q_T}p(w-v)=J(z,v)
\end{aligned}$$
$y$  being  the solution of  (\ref{heat_z}) associated with $w$ in the transposition sense. 
\end{proof} 

\begin{proof}  of Theorem \ref{controllability_result}.
Proposition \ref{minimal-control} gives the existence of a control $v\in L^2(\rho_0(s),q_T)$ such that the solution $z$ of  (\ref{heat_z})
satisfies  (\ref{heat_z1}). Moreover, this control is the unique control  which minimizes together with the corresponding solution $z$ the functional 
 $J$ and satisfies (\ref{estimation1}). To finish the proof of Theorem \ref{controllability_result}, it suffices to prove that $(z,v)$ satisfies the estimate  (\ref{estimation2}) and if $z_0\in{H^1_0(\Omega)}$, then $z\in L^\infty(Q_T)$ and satisfies the estimate  (\ref{estimation1bis}).

Multiplying (\ref{heat_z}) by $\rho_1^2(s)z$ and integrating by part we obtain
\begin{equation}\label{27}
\frac12\int_\Omega (\partial_t |z|^2)\,\rho_1^2(s)+\int_\Omega \rho_1^2(s)|\partial_x z|^2+2\int_\Omega \rho_1(s)z\partial_x \rho_1(s)\cdot\partial_x z +\int_\Omega \rho_1^2(s)Azz=\int_\omega v\rho_1^2(s) z+\int_\Omega B\rho_1^2(s) z.
\end{equation}
But $\int_\Omega (\partial_t |z|^2)\rho_1^2(s)=\partial_t \int_\Omega |z|^2\rho_1^2(s)-2\int_\Omega |z|^2\rho_1(s) \partial_t \rho_1(s)$
and $\partial_t\rho_1(s)=-\frac{\partial_t\theta}{\theta} \rho_1(s)+s \frac{\partial_t\theta}{\theta}\varphi \rho_1(s)$. From the definition of $\theta$ and $\varphi$ we have:
$$
   \Big|\frac{\partial_t \theta}{\theta}(t)\Big|\le 
   \left\{\begin{aligned}
   &Cs \quad\forall t\in[0,T/4]\\
   &0 \quad \forall t\in[T/4,T-2T_1]\\
   &\theta \quad\forall t\in [T-T_1,T)
    \end{aligned}\right.$$
and $ \Big|\frac{\partial_t \theta}{\theta}(t)\Big|\le  C \quad\forall t\in [T-2T_1,T-T_1]$ since $\dfrac{\partial_t \theta}{\theta}(T-2T_1)=0$, $\dfrac{\partial_t \theta}{\theta}(T-T_1)=\frac{1}{T_1}$ and $\theta$ is $\mathcal{C}^2$.
Since $\theta\le \xi$ and $s\ge 1$, on $ [0,T)$:
$$ \Big|\frac{\partial_t \theta}{\theta}\Big|\le Cs\xi.$$
From the definition of $\varphi$, $\varphi\le C\theta\le C\xi$ and thus on $[0,T)$ :
$$   \Big|\frac{\partial_t \theta}{\theta}\varphi \Big|\le Cs\xi^2.$$
Thus, since $s\ge 1$, $\xi\ge 1$  and $\rho(s)=\xi\rho_1(s)$, on $[0,T)$ :
$$
-\int_\Omega |z|^2\rho_1(s) \partial_t \rho_1(s)
=\int_\Omega \frac{\partial_t\theta}{\theta}  |\rho_1(s)  z|^2-s\int_\Omega\frac{\partial_t\theta}{\theta}\varphi  |\rho_1(s)  z|^2
\le  C  s^2 \int_\Omega \rho^2(s)|z|^2.
$$
On the other hand
$$\partial_x\rho_1(s)=\partial_x(\xi^{-1}\rho(s))=\partial_x(\xi^{-1})\rho(s)+\xi^{-1}\partial_x \rho(s)=-\partial_x \widehat{\psi}\big( \lambda_0\xi^{-1}\rho(s)+ s\lambda_0 \rho(s)\big)=-\partial_x \widehat{\psi} \lambda_0 \rho(s)\big(\xi^{-1}+ s\big)$$
and thus, since $\xi\ge 1$  and $s\ge 1$, we write $(\xi^{-1}+s)\leq 2s$ and 
$$\begin{aligned}
\Big|\int_\Omega \rho_1(s)z\partial_x \rho_1(s)\cdot\partial_x z \Big|
&\le 2\Vert\partial_x \hat{\psi} \Vert_\infty \lambda_0 \int_\Omega s|\rho(s) z| \, |\rho_1(s) \partial_x z|\\
&\le Cs^2  \int_\Omega |\rho(s) z|^2 + \frac12  \int_\Omega |\rho_1(s)\partial_x z|^2 .
\end{aligned}
$$
We also have, since $\rho(s)=\xi\rho_1(s)$ and $\xi\ge 1$ the estimate $\Big|\int_\Omega \rho_1^2(s) Azz\Big|\le C \|A\|_\infty\|\rho(s) z\|_2^2$.
 Finally, since $\rho_1^2(s)=\xi^{-1/2} \rho_0(s)\rho(s)$ and $\xi^{-1/2}\le 1$, we infer that
$$
\Big|\int_\omega v\rho_1^2(s)z\Big|\le \Big|\int_\omega \rho_0(s) v\xi^{-1/2}\rho(s) z\Big|\le \Big(\int_\omega|\rho_0(s) v|^2\Big)^{1/2}\|\rho(s) z\|_2
$$
and $|\int_\Omega  B\rho_1^2(s) z|\le \|\rho_0(s) B\|_{2}\|\rho(s)z\|_{2}$.
Thus (\ref{27}) implies that 
$$
\partial_t \int_\Omega\rho_1^2(s) |z|^2+\int_\Omega \rho_1^2(s)|\partial_x z|^2\le
C \big(s^2  +\|A\|_{L^\infty(Q_T)} \big)\| \rho(s) z\|_2^2
 +\Big(\Big(\int_\omega|\rho_0(s) v|^2\Big)^{1/2}+ \|\rho_0(s) B\|_{2}\Big)\|\rho(s) z\|_2\\
  $$
  and therefore for all $t\in[0,T)$, since $\|\rho_1(s,0)z_0\|_2^2\le e^{cs}\|z_0\|_2^2$ (with $c:=\Vert \varphi(\cdot,0)\Vert_\infty$), we get
$$
\begin{aligned}
\Big( \int_\Omega\rho_1^2(s) |z|^2\Big)(t)
&+\int_{Q_t} \rho_1^2(s)|\partial_x z|^2\le 
C \big(s^2+\|A\|_{L^\infty(Q_T)} \big)\| \rho(s) z\|_{L^2(Q_T)}^2\\
 &+ \big(\|\rho_0(s) v\|_{L^2(q_T)}+ \|\rho_0(s) B\|_{L^2(Q_T)} \big)\|\rho(s) z\|_{L^2(Q_T)}+e^{cs}\|z_0\|_2^2.
 \end{aligned}
$$
Using (\ref{estimation-z-v1}) we obtain, since $s\ge 1$, for all $t\in[0,T)$ : 
\begin{equation} \label{constante-pourrie}
\Big( \int_\Omega\rho_1^2(s) |z|^2\Big)(t)+\int_{Q_t} \rho_1^2(s)|\partial_x z|^2
\le 
 C s^{-1} \big(1+\|A\|_{L^\infty(Q_T)} \big)\big( \|\rho_0(s) B\|_{L^2(Q_T)}^2 +e^{cs}\|z_0\|_2^2\big) \end{equation}
  which gives (\refeq{estimation2}).
  
 If $z_0\in H^1_0(\Omega)$, since $z$ is a weak solution of (\ref{heat_z}) associated with $v$, standard arguments give that $z\in L^2(0,T;H^2(\Omega))$,  $\partial_tz\in L^2(Q_T)$ and therefore  $z\in L^\infty(Q_T)$. Moreover,  multiplying  (\ref{heat_z}) by $\partial_{xx}z$  and integrating by part we obtain 
 $$\frac12\partial_t\int_\Omega|\partial_x z|^2+\int_\Omega |\partial_{xx}z|^2\le \|-Az+v1_\omega+B\|_2\|\partial_{xx} z\|_2$$
 and thus, since $\rho(s)\ge\rho_0(s)\ge e^{\frac32s}$: 
  $$ 
  \begin{aligned} \partial_t\int_\Omega|\partial_x z|^2
  &\le \int_\Omega|-Az+v1_\omega+B|^2
  \le 3\big(\int_\Omega|Az|^2+\int_\Omega| v1_\omega |^2+\int_\Omega| B|^2\big)\\
&\le 3e^{-3s}\Big(\|A\|_{L^{\infty}(Q_T)}^2\|\rho(s) z\|_2^2+\|\rho_0(s)v1_\omega\|_2^2+\|\rho_0(s)B\|_2^2\big)
\end{aligned} $$
which gives, a.e in $t\in(0,T)$
$$ 
  \int_\Omega|\partial_x z|^2(t) 
  \le 3e^{-3s}\Big(\|A\|_{L^{\infty}(Q_T)}^2\|\rho(s) z\|_{L^2(Q_T)}^2+\|\rho_0(s)v\|_{L^2(q_T)}^2+\|\rho_0(s)B\|_{L^2(Q_T)}^2\big)+\|  z_0\|_{H^1_0(\Omega)}^2.
$$
 
 Since $z\in H^1_0(\Omega)$, a.e in $(x,t)\in\Omega\times (0,T)$ (recall that $\Omega=(0,1)$), $z(x,t)=\int_0^x\partial_x z(r,t)d r\le \|\partial_x z\|_2(t)$
 and thus, using (\ref{estimation1}), since $s\ge 1$ :
$$\begin{aligned}
\|z\|_{L^\infty(Q_T)}
&\le  \|\partial_x z\|_{L^\infty(0,T;L^2(\Omega))}\\
&\le  \sqrt{3}e^{-\frac32s}\Big(\|A\|_{L^{\infty}(Q_T)}\|\rho(s) z\|_{L^2(Q_T)}+\|\rho_0(s)v\|_{L^2(q_T)}+\|\rho_0(s)B\|_{L^2(Q_T)}\big)+\|  z_0\|_{H^1_0(\Omega)}\\
&\le Ce^{-\frac32s}(1+\|A\|_{L^\infty(Q_T)})\big(\|\rho_0(s)B\|_{L^2(Q_T)}+e^{cs}\|z_0\|_{H^1_0(\Omega)}\big)
\end{aligned}$$
that is (\ref{estimation1bis}).
\end{proof} 
  
\begin{remark}
 Remark that $c=\Vert \varphi(0,\cdot)\Vert_\infty>3/2$ so that the previous bound of $\Vert z\Vert_{L^\infty(Q_T)}$ is not uniform with respect to the parameter $s\geq 1$.
\end{remark}

\section{The least-squares method}\label{sec_LS}

In this section, we assume that the nonlinear function $g$ satisfies the hypothesis \ref{constraint_g_holder} for some $p\in [0,1]$ and that 
\begin{enumerate}[label=$\bf (H_2)$,leftmargin=1.5cm]
\item\label{growth_condition}\ There exists $\alpha\geq 0$ and $\beta>0$ such that $\vert g^{\prime}(r)\vert\leq \alpha + \beta \ln^{3/2}(1+\vert r\vert )$ for every $r$ in $\R$. 
\end{enumerate}
We introduce the notation 
\begin{equation}\label{estimg'}
\psi(r):=\alpha+\beta\ln^{3/2}(1+|r|), \quad \forall r\in \mathbb{R}. 
\end{equation}
We also assume that $g(0)=0$ leading in particular to the estimate $|g(r)|\le  |r|(\alpha+\beta\ln^{3/2}(1+|r|))$ for every $r\in\R$. The case $p=0$ corresponds to $\beta=0$ and $\alpha =\|g'\|_{L^\infty(\R)}$ and thus $\psi(r)\leq \|g'\|_{L^\infty(\R)}$ for every $r\in\R$.
  Remark that \ref{growth_condition} implies (\ref{cond_f_5}) and \ref{asymptotic_g}.

\subsection{The least-squares method}

We introduce, for all $s\ge s_0$,  the vector space $\mathcal{A}_0(s)$
\begin{equation}
\nonumber
\begin{aligned}
\mathcal{A}_0(s):=
\biggl\{(y,f): \ &\rho(s)\, y\in L^2(Q_T),  \  \rho_0(s) f\in L^2(q_T), \\
& \rho_0(s)(\partial_ty - \partial_{xx} y)\in L^2(Q_T),\  y(\cdot,0)=0\ \textrm{in}\ \Omega,\ y=0 \ \textrm{on}\  \Sigma_T\biggr\}
\end{aligned}
\end{equation}
where $\rho(s)$, $\rho_1(s)$ and $\rho_0(s)$ are defined in \eqref{def_weight}. $\mathcal{A}_0(s)$ endowed with the following scalar product
$$
\begin{aligned}
\big((y,f),(\overline{y},\overline{f})\big)_{\mathcal{A}_0(s)}:=\big(\rho(s) y,\rho(s) \overline{y}\big)_{2,q_T}
& + \big(\rho_0(s)f,\rho_0(s) \overline{f}\big)_{2,q_T}\\
&+\big(\rho_0(s) (\partial_ty-\partial_{xx} y),\rho_0(s)(\partial_t\overline{y}-\partial_{xx} \overline{y})\big)_{2}
\end{aligned}
$$
is a Hilbert space. The corresponding norm is $\Vert (y,f)\Vert_{\mathcal{A}_0(s)}=\sqrt{((y,f),(y,f))_{\mathcal{A}_0(s)}}$. We also consider the convex set 
\begin{equation}
\nonumber
\begin{aligned}
\mathcal{A}(s):=
\biggl\{(y,f):\ & \rho(s)\, y\in L^2(Q_T), \  \rho_0(s) f\in L^2(q_T), \\
& \rho_0(s)(\partial_ty - \partial_{xx} y)\in L^2(Q_T),\  y(\cdot,0)=u_0\ \textrm{in}\ \Omega,\ y=0 \ \textrm{on}\ \Sigma_T\biggr\}
\end{aligned}
\end{equation}
so that we can write $\mathcal{A}(s)=(\overline{y},\overline{f})+\mathcal{A}_0(s)$ for any element $(\overline{y},\overline{f})\in \mathcal{A}(s)$. We endow $\mathcal{A}(s)$ with the same norm. Clearly, if $(y,f)\in \mathcal{A}(s)$, then  $y\in C([0,T];L^2(\Omega))$ and since $\rho(s)\, y\in L^2(Q_T)$, then $y(\cdot,T)=0$. The null controllability requirement is therefore incorporated in the spaces $\mathcal{A}_0(s)$ and $\mathcal{A}(s)$.

\begin{remark}\label{remark2}
For any $(y,f)\in\mathcal{A}(s)$, since $\rho_0(s)\ge 1$ (see Remark \ref{remark-rho0}), we get that $\partial_ty - \partial_{xx} y\in L^2(Q_T)$; since $u_0\in H^1_0(\Omega)$,  standard arguments imply that $y\in L^\infty(Q_T)$ with
$$
\|y\|_{L^\infty(Q_T)} \le C \big(\|u_0\|_{H^1_0(\Omega)}+\|\partial_ty - \partial_{xx} y\|_{L^2(Q_T)} \big)
$$
In particular, for any $(y,f)\in\mathcal{A}_0(s)$,  $\|y\|_{L^\infty(Q_T)}\le Ce^{-\frac32s}\|(y,f)\|_{\mathcal{A}_0(s)}$ for some $C$ independent of $s$.
\end{remark}
\par\noindent
For any fixed $(\overline{y},\overline{f})\in \mathcal{A}(s)$ and $s\geq 0$, we can now consider the following non convex extremal problem :
\begin{equation}
\label{extremal_problem}
\min_{(y,f)\in \mathcal{A}_0(s)} E(s,\overline{y}+y,f+\overline{f})
\end{equation}
where the least-squares functional $E(s):\mathcal{A}(s)\to \mathbb{R}$ is defined as follows 
\begin{equation}
E(s,y,f):=\frac{1}{2}\biggl\Vert \rho_0(s)\biggl(\partial_ty-\partial_{xx} y + g(y)-f\,1_{\omega} \biggr)\biggr\Vert^2_{L^2(Q_T)}.
\end{equation}
We check that $\rho_0(s)g(y)\in L^2(Q_T)$ for any $(y,f)\in \mathcal{A}(s)$ so that $E(s)$ is well-defined. Precisely, using that $|g(r)|\le|r|\big(\alpha+\beta \ln^{3/2}(1+|r|)\big)=|r|\psi(r)$ for every $r$ and that $\rho_0\leq \rho$, we write
\begin{equation}\label{estim-rho0g}
\begin{aligned}
\|\rho_0(s)g(y)\|_{L^2(Q_T)}&\le  \Vert\rho_0(s)|y| \psi(y)\Vert_{L^2(Q_T)}\\
& \le \psi\big(\|y\|_{L^\infty(Q_T)}\big)\|\rho(s)y\|_{L^2(Q_T)}\leq \psi\big(\|y\|_{L^\infty(Q_T)}\big)\|(y,f)\|_{\mathcal{A}(s)}.
\end{aligned}
\end{equation}

Any pair $(y,f)\in \mathcal{A}$ for which $E(y,f)$ vanishes is a controlled pair of (\ref{heat-NL}), and conversely. In this sense, the functional $E$ is a so-called error functional which measures the deviation of $(y,f)$ from being a solution of the underlying nonlinear equation. 
Moreover, although the hypothesis \ref{growth_condition} is stronger \ref{asymptotic_g}, Theorem \ref{nullcontrollheatplus} proved in \cite{EFC-EZ} does not imply the existence of zero of $E$ in $\mathcal{A}(s)$, since controls of minimal $L^\infty(q_T)$ norm are considered in \cite{EFC-EZ}. Nevertheless, our constructive approach will show that, for $s$ large enough, the extremal problem (\ref{extremal_problem})
admits solutions $(y,f)\in \mathcal{A}(s)$ for which $E$ vanishes. 

We also emphasize that the $L^2(Q_T)$ norm in $E$ indicates that we are looking for regular weak solutions of the parabolic equation (\ref{heat-NL}). We refer to \cite{lemoine_gayte_munch} devoted to the case $g^\prime\in L^\infty(\mathbb{R})$ and the multidimensional case where the $L^2(0,T;H^{-1}(\Omega))$ is considered leading to weaker solutions. 

A practical way of taking a functional to its minimum is through some use of its derivative. In doing so, the presence of local minima is always something that  may dramatically spoil the whole scheme. The unique structural property that discards this possibility is the convexity of the functional $E$. However, for nonlinear equation like (\ref{heat-NL}), one cannot expect this property to hold for the functional $E$. Nevertheless, we are going to construct a minimizing sequence which always convergence to a zero of $E$. To do so, we introduce the following definition. 

\begin{defi}\label{def_Y1F1}
For any $s$ large enough and $(y,f)\in \mathcal{A}(s)$, we define the unique pair $(Y^1,F^1)\in \mathcal{A}_0(s)$ solution 
of 
\begin{equation}
\label{heat-Y1}
\left\{
\begin{aligned}
& \partial_tY^1 - \partial_{xx} Y^1 +  g^{\prime}(y) Y^1 = F^1 1_{\omega}+\partial_ty-\partial_{xx} y + g(y)-f\, 1_{\omega} \quad \textrm{in}\quad Q_T,\\
& Y^1=0 \,\, \textrm{on}\,\, \Sigma_T, \quad Y^1(\cdot,0)=0 \,\, \textrm{in}\,\, \Omega
\end{aligned}
\right.
\end{equation}
and which minimizes the functional $J$ defined in Theorem \ref{controllability_result}. In the sequel, it is called the minimal controlled pair. 
\end{defi}
The next proposition shows that there do exists some $(Y^1,F^1)$ in $\mathcal{A}_0(s)$. We emphasize that $F^1$ is a null control for the solution $Y^1$.  Preliminary, we prove the following result. 

\begin{lemma}\label{existence_triplet}
There exists $(y,f)\in L^2(Q_T)\times L^2(q_T)$ such $(y,f)\in \mathcal{A}(s)$ for all $s\geq 0$. 
\end{lemma}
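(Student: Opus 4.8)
The goal is to exhibit a single pair $(y,f)$ that lies in $\mathcal{A}(s)$ simultaneously for all $s \geq 0$, i.e.\ satisfying $\rho(s)y \in L^2(Q_T)$, $\rho_0(s)f \in L^2(q_T)$, $\rho_0(s)(\partial_t y - \partial_{xx}y) \in L^2(Q_T)$, together with the boundary and initial conditions $y = 0$ on $\Sigma_T$, $y(\cdot,0) = u_0$ in $\Omega$. The plan is to invoke the linear controllability result already proved, namely Theorem \ref{controllability_result}, with the trivial potential $A \equiv 0$ and trivial source $B \equiv 0$ but with the prescribed initial datum $z_0 = u_0 \in H^1_0(\Omega)$.

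Concretely, I would fix an auxiliary value of the Carleman parameter, say $s = s_0$ (the threshold furnished by Lemma \ref{carleman}), and apply Theorem \ref{controllability_result}: since $A \equiv 0$ we have $\|A\|_{L^\infty(Q_T)}^{2/3} = 0 \leq s_0$, so the hypotheses are met, and we obtain a control $v \in L^2(\rho_0(s_0), q_T)$ such that the solution $z$ of the homogeneous heat equation with control $v$ and initial datum $u_0$ satisfies $z(\cdot,T) = 0$, and moreover $\rho(s_0) z \in L^2(Q_T)$. Set $(y,f) := (z,v)$. The point to make is that because the weights are \emph{monotone in $s$}: from the definition \eqref{def_weight}, $\rho(x,t,s) = e^{s\varphi(x,t)}$ and, by Remark \ref{remark-rho0}, $\varphi \geq \tfrac32 \xi \geq \tfrac32 > 0$, so $s \mapsto \rho(x,t,s)$ is \emph{decreasing}; hence $\rho(s) \le \rho(s_0)$ pointwise for every $s \ge s_0$, and similarly $\rho_0(s) = \xi^{-3/2}\rho(s) \le \xi^{-3/2}\rho(s_0) = \rho_0(s_0)$. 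Therefore $\rho(s)y \in L^2(Q_T)$, $\rho_0(s)f \in L^2(q_T)$ and $\rho_0(s)(\partial_t y - \partial_{xx} y) \in L^2(Q_T)$ for every $s \ge s_0$, the last because $\partial_t y - \partial_{xx} y = v 1_\omega \in L^2(\rho_0(s_0); Q_T)$ (indeed $v \in L^2(\rho_0(s_0); q_T)$ and this quantity is supported in $q_T$). This gives membership in $\mathcal{A}(s)$ for all $s \ge s_0$. For $0 \le s \le s_0$ there is nothing more to prove: since all three weights are continuous and bounded on $Q_{T-\delta}$ and the relevant quantities already have finite weighted $L^2$ norms with the larger weight $\rho_0(s_0) \ge \rho_0(s)$ (again by monotonicity, now $\rho_0(s) \le \rho_0(s_0)$ since $s \le s_0 \Rightarrow$ wait — we need the reverse), so I would instead note directly that for $s \le s_0$ one has $\rho(s) \le \rho(0)\cdot(\text{something})$...

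More cleanly: since $\varphi > 0$ and $\xi \ge 1$, for any $0 \le s \le s'$ we have $\rho(s) \le \rho(s')$ is \emph{false}; rather $\rho(s) = e^{s\varphi} \ge e^{0} = 1$ and $\rho$ is increasing in $s$. So the correct monotonicity is: $s \mapsto \rho(x,t,s)$ is increasing, hence $\rho(s) \le \rho(s')$ for $s \le s'$, i.e.\ the weights grow with $s$. Thus the binding constraint is \emph{large} $s$, not small $s$, and there is no single finite $s_0$ that dominates all $s \ge 0$. The resolution, and the genuine content of the lemma, is that the solution $(z,v)$ produced by Theorem \ref{controllability_result} is not merely in $L^2(\rho(s_0);\cdot)$ but decays near $t = T$ like a Carleman weight with \emph{every} exponent — equivalently $z$ and $v$ vanish identically in a neighborhood of $t = T$, or at least faster than any $\rho(s)^{-1}$. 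This is where I would be careful: one should either (i) take the control on a slightly shorter time interval $(0,T-\eta)$ and extend $z$ by zero on $(T-\eta,T)$, so that $\rho(s) z$ and $\rho_0(s) v$ are, for every $s$, products of an $L^\infty(Q_{T-\eta})$ weight with an $L^2$ function — finite for all $s \ge 0$ — while $z(\cdot,T)=0$ trivially and the equation $\partial_t z - \partial_{xx} z = v1_\omega + (\text{correction from the gluing})$ must be checked to still have $\rho_0(s)$-integrable right-hand side; or (ii) observe that the minimizer $(z,v) = (\rho^{-2}(s_0) L^\star p, -s_0^{3/2}\lambda_0^2 \rho_0^{-2}(s_0) p)$ from Lemma \ref{solution-controle} inherits from $p \in P$ enough decay.

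The main obstacle is therefore precisely this: arranging that one fixed pair works for \emph{all} $s$ despite the weights being unbounded as $s \to \infty$ and blowing up as $t \to T^-$. The cleanest route, which I would adopt, is the truncation-and-extension argument: apply Theorem \ref{controllability_result} on $(0, T')$ with $T' < T$ and a null control, set $y = z$ on $\Omega \times (0,T')$ and $y = 0$ on $\Omega \times [T', T]$, $f = v$ on $q_{T'}$ and $f = 0$ afterwards; then $y = 0$ near $t = T$, so all weighted norms reduce to integrals over $Q_{T'}$ where $\rho(s), \rho_0(s) \in L^\infty$ for every finite $s$, making $(y,f) \in \mathcal{A}(s)$ for all $s \ge 0$. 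One checks $y \in C([0,T];L^2(\Omega))$ and that $\partial_t y - \partial_{xx} y$ is still in $L^2(Q_{T'})$ (hence in $L^2(\rho_0(s);Q_T)$ for all $s$), using that $z(\cdot,T')$ can be taken to be $0$ by the null controllability on $(0,T')$, so no singular mass is created at the gluing time. I expect this extension/regularity bookkeeping across $t = T'$ to be the only delicate point; everything else is a direct citation of Theorem \ref{controllability_result} together with the elementary fact that $\rho(s), \rho_0(s) \in L^\infty(Q_{T'})$ for each finite $s$.
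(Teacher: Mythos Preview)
Your eventual approach --- null-controlling on a subinterval $(0,T')$ and extending by zero --- is correct and does produce a pair in $\mathcal{A}(s)$ for every $s\ge 0$: once $z$ and $v$ are supported in $Q_{T'}$, all three weighted norms reduce to integrals over $Q_{T'}$ where $\rho(s),\rho_0(s)\in L^\infty$ for each fixed $s$, and the gluing creates no singular layer at $t=T'$ because $z(\cdot,T')=0$ in $H^1_0(\Omega)$ (so the extended $y$ lies in $C([0,T];H^1_0)$ and $\partial_t y-\partial_{xx}y$ equals $v1_\omega$ on $(0,T')$ and $0$ afterward). The paper, however, takes a much lighter route that avoids invoking any controllability result at all: it sets $y=\phi\,y^\star$ and $f=0$, where $y^\star$ is the \emph{free} heat evolution of $u_0$ and $\phi\in C^\infty([0,T])$ is a temporal cutoff with $\phi(0)=1$ and $\phi\equiv 0$ on $[T/2,T]$. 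Then $\partial_t y-\partial_{xx}y=\phi'\,y^\star$ is supported in $\Omega\times[0,T/2]$, and the same ``weights are bounded away from $t=T$'' observation finishes the argument. Your construction buys a pair that genuinely solves a controlled linear heat equation on $(0,T')$, which is more than is needed here and requires re-instantiating Theorem~\ref{controllability_result} on a different time interval plus the gluing bookkeeping; the paper's construction is a two-line computation that sidesteps Theorem~\ref{controllability_result} entirely. (Incidentally, your monotonicity discussion contains a slip: $\rho(s)=e^{s\varphi}$ with $\varphi>0$ is plainly \emph{increasing} in $s$, so the sentence ``$\rho(s)\le\rho(s')$ is false'' is itself false --- though you do reach the right conclusion that large $s$ is the binding regime.)
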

\begin{proof}
Let $y^\star$ be the solution of
\begin{equation}
\nonumber
\left\{
\begin{aligned}
& \partial_t y^\star - \partial_{xx} y^\star= 0 \quad  \textrm{in}\quad Q_T,\\
& y^\star=0 \,\, \textrm{on}\,\, \Sigma_T, \quad y^\star(\cdot,0)=u_0\in H_0^1(\Omega) \,\, \textrm{in}\,\, \Omega,
\end{aligned}
\right.
\end{equation}
so that $y^\star\in L^2(0,T;H^2(0,1))$ and $\partial_t y^\star\in L^2(0,T;L^2(0,1))$. Let now any function $\phi\in C^\infty([0,T])$, $0\leq \phi\leq 1$ such that $\phi(0)=1$ and $\phi\equiv 0$ in $[T/2,T]$. Then, we easily check that the pair $(y,0)$ with $y:=\phi\, y^\star$ belongs to $\mathcal{A}(s)$ for any $s\geq 0$. 

Moreover, $\Vert y\Vert_{L^\infty(Q_T)}\leq  \Vert y^\star\Vert_{L^\infty(Q_T)}\leq C \Vert u_0\Vert_{H_0^1(\Omega)}$ so that any $s\geq \max(\Vert g^\prime\Vert^{2/3}_{L^\infty(0,C\Vert u_0\Vert_{H_0^1(\Omega)})},s_0)$ satisfies $s\geq \max(\Vert g^{\prime}(y)\Vert^{2/3}_{L^\infty(Q_T)},s_0)$.
\end{proof}

\begin{prop}  \label{proposition3} Let  $(y,f)\in \mathcal{A}(s)$ with $s \ge \max\big( \|g'(y)\|_{L^\infty(Q_T)}^{2/3},s_0\big)$. There exists a minimal controlled pair $(Y^1,F^1)\in \mathcal{A}_0(s)$ solution of (\ref{heat-Y1}). It satisfies the estimate: 
\begin{equation} \label{estimateF1Y1}
\Vert ( Y^1,F^1)\Vert_{\mathcal{A}_0(s)} \leq C \sqrt{E(s,y,f)} 
\end{equation}
for some $C >0$.
\end{prop}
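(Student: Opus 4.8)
The plan is to recognize the linearized system \eqref{heat-Y1} as a particular instance of the linear control problem \eqref{heat_z}--\eqref{heat_z1} treated in Theorem \ref{controllability_result}, with potential $A:=g'(y)$, source term $B:=\partial_t y-\partial_{xx}y+g(y)-f\,1_\omega$ and initial datum $z_0:=0$, and then to translate the estimates provided there into a bound for the $\mathcal{A}_0(s)$-norm of the minimal controlled pair.

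First I would check that the hypotheses of Theorem \ref{controllability_result} hold. Since $(y,f)\in\mathcal{A}(s)$, Remark \ref{remark2} gives $y\in L^\infty(Q_T)$, hence $A=g'(y)\in L^\infty(Q_T)$ because $g\in C^1(\R)$; and the standing assumption $s\ge\max(\|g'(y)\|_{L^\infty(Q_T)}^{2/3},s_0)$ is exactly the threshold $s\ge\max(\|A\|_{L^\infty(Q_T)}^{2/3},s_0)$ required there. For the source term, the definition of $\mathcal{A}(s)$ gives $\rho_0(s)(\partial_t y-\partial_{xx}y)\in L^2(Q_T)$ and $\rho_0(s) f\in L^2(q_T)$, while \eqref{estim-rho0g} gives $\rho_0(s)g(y)\in L^2(Q_T)$; hence $B\in L^2(\rho_0(s);Q_T)$, and from the very definition of $E$ one has $\|\rho_0(s) B\|_{L^2(Q_T)}=\sqrt{2E(s,y,f)}$. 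Theorem \ref{controllability_result} together with Proposition \ref{minimal-control} then furnishes a unique pair $(Y^1,F^1)$, with $F^1$ a null control for $Y^1$ and $(Y^1,F^1)$ minimizing $J$; this is precisely the minimal controlled pair of Definition \ref{def_Y1F1}, and it satisfies $Y^1=0$ on $\Sigma_T$, $Y^1(\cdot,0)=0$ and $Y^1(\cdot,T)=0$.

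Next I would assemble the norm estimate. Estimate \eqref{estimation1} with $z_0=0$ reads $\|\rho(s) Y^1\|_{L^2(Q_T)}+\|\rho_0(s) F^1\|_{L^2(q_T)}\le Cs^{-3/2}\|\rho_0(s)B\|_{L^2(Q_T)}=Cs^{-3/2}\sqrt{2E(s,y,f)}$, which controls two of the three terms entering $\|(Y^1,F^1)\|_{\mathcal{A}_0(s)}$. For the remaining term I would reinsert the equation: $\rho_0(s)(\partial_t Y^1-\partial_{xx}Y^1)=\rho_0(s)F^1 1_\omega+\rho_0(s)B-\rho_0(s)g'(y)Y^1$, so by the triangle inequality, the bound $\rho_0\le\rho$ (Remark \ref{remark-rho0}), and $\|g'(y)\|_{L^\infty(Q_T)}\le s^{3/2}$ (again from the choice of $s$), we obtain $\|\rho_0(s)(\partial_t Y^1-\partial_{xx}Y^1)\|_{L^2(Q_T)}\le\|\rho_0(s)F^1\|_{L^2(q_T)}+\|\rho_0(s)B\|_{L^2(Q_T)}+s^{3/2}\|\rho(s)Y^1\|_{L^2(Q_T)}\le C\sqrt{E(s,y,f)}$. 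Squaring and summing the three contributions yields both $(Y^1,F^1)\in\mathcal{A}_0(s)$ and the desired bound \eqref{estimateF1Y1}.

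There is no substantial obstacle here: the statement is essentially a bookkeeping consequence of Theorem \ref{controllability_result}. The one point deserving care is the estimate on the weighted parabolic operator $\rho_0(s)(\partial_t Y^1-\partial_{xx}Y^1)$, which is not directly delivered by the abstract controllability estimates and must be recovered by reinserting \eqref{heat-Y1} and absorbing the potential term, using crucially that $s$ has been chosen large enough that $\|g'(y)\|_{L^\infty(Q_T)}\le s^{3/2}$.
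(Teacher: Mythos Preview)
Your proof is correct and follows essentially the same route as the paper: apply Theorem \ref{controllability_result} with $A=g'(y)$, $B=\partial_t y-\partial_{xx}y+g(y)-f\,1_\omega$, $z_0=0$ to obtain the bounds on $\rho(s)Y^1$ and $\rho_0(s)F^1$, then recover the parabolic term $\rho_0(s)(\partial_t Y^1-\partial_{xx}Y^1)$ by reinserting the equation and absorbing the potential contribution via $\|g'(y)\|_{L^\infty(Q_T)}\le s^{3/2}$ and $\rho_0\le\rho$. The paper phrases the last step as $s^{-3}\|g'(y)\|_{L^\infty(Q_T)}^2\le 1$, which is the same inequality squared.
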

\begin{proof}   For all $(y,f)\in \mathcal{A}(s)$, 
 $\rho_0(s)(\partial_ty-\partial_{xx} y+ g(y)-f1_\omega)\in L^2(Q_T)$. The existence of a null control $F^1$ is therefore given by Proposition \ref{controllability_result}. Choosing the control $F^1$ which minimizes together with the corresponding solution $Y^1$ the functional $J$ defined in Theorem \ref{controllability_result}, we get from (\ref{estimation1})-(\ref{estimation2}) the following estimates  (since $Y^1(\cdot,0)=0$) :
\begin{equation}
\label{estimateF1Y1rho}
\begin{aligned}
\Vert \rho(s)\, Y^1\Vert_{L^2(Q_T)}+\Vert \rho_0(s) F^1\Vert_{L^2(q_T)} &\leq C s^{-3/2} \Vert \rho_0(s) (\partial_ty-\partial_{xx} y+g(y)-f1_\omega)\Vert_{L^2(Q_T)}\\
&\leq C  s^{-3/2}\sqrt{E(s,y,f)}.
\end{aligned}
\end{equation}
%
Eventually, from the equation solved by $Y^1$, 
$$
\begin{aligned}
\Vert\rho_0(s)(\partial_tY^1-\partial_{xx}& Y^1)\Vert_{L^2(Q_T)} \\
&\leq \Vert \rho_0(s) F^1\Vert_{L^2(q_T)}+\Vert\rho_0(s) g^{\prime}(y)Y^1\Vert_{L^2(Q_T)}+ \Vert \rho_0(s)(\partial_ty-\partial_{xx} y+g(y)-f\, 1_{\omega})\Vert_{L^2(Q_T)}\\
&\leq \Vert \rho_0(s) F^1\Vert_{L^2(q_T)}+\Vert\rho_0(s) g^{\prime}(y)Y^1\Vert_{L^2(Q_T)}+ \sqrt{2E(s,y,f)}.
\end{aligned}
$$
But, since $\rho_0(s)\le \rho(s)$, using (\ref{estimateF1Y1rho}), we have
\begin{equation}\label{estimaterhogY1}
\|\rho_0(s)g'(y)Y^1\|^2_{L^2(Q_T)}
\le \|g'(y)\|^2_{L^\infty(Q_T)}\|\rho(s)Y^1\|^2_{L^2(Q_T)}\\
\le C s^{-3}  \|g'(y)\|^2_{L^\infty(Q_T)} E(s,y,f)
\end{equation}
thus
\begin{equation}
\label{boundY1}
\Vert\rho_0(s)(\partial_tY^1-\partial_{xx} Y^1-F^1\, 1_{\omega})\Vert_{L^2(Q_T)}\le  C\big(1+ s^{-3/2}\|g'(y)\|_{L^\infty(Q_T)}\big) \sqrt{E(s,y,f)}
\end{equation}
which proves that $(Y^1,F^1)$  belongs to $\mathcal{A}_0(s)$. Eventually, 
\begin{equation}
\nonumber
\begin{aligned}
\Vert ( Y^1,F^1)\Vert^2_{\mathcal{A}_0(s)}&=\Vert \rho(s)Y^1\Vert_2^2+ \Vert \rho_0(s)F^1\Vert_2^2+ \Vert \rho_0(s)(\partial_tY^1-\partial_{xx}Y^1)\Vert_2^2\\
&\leq \Vert \rho(s)Y^1\Vert_2^2+4\Vert \rho_0(s)F^1\Vert_2^2+3\Vert\rho_0(s) g^{\prime}(y)Y^1\Vert^2_{L^2(Q_T)}+ 3(\sqrt{2E(s,y,f)})^2\\
&\leq 4C s^{-3}E(s,y,f)+C s^{-3}  \|g'(y)\|^2_{L^\infty(Q_T)} E(s,y,f)+ 6 E(s,y,f)\\
&\leq C E(s,y,f)(1+ s^{-3}+ s^{-3} \|g'(y)\|^2_{L^\infty(Q_T)}).
\end{aligned}
\end{equation}
Since $s\ge \max(\|g'(y)\|_{L^\infty(Q_T)}^{2/3},s_0)\ge 1 $, we get $s^{-3}\leq 1$ and $s^{-3} \|g'(y)\|^2_{L^\infty(Q_T)}\leq 1$ leading to the result.
\end{proof}

\begin{remark}
From (\ref{heat-Y1}), we observe that $z:=y-Y^1\in L^2(\rho(s),Q_T)$ is a null controlled solution satisfying 
\begin{equation}
\label{heat-Y1-bis}
\left\{
\begin{aligned}
& \partial_tz - \partial_{xx} z +  g^{\prime}(y) z = (f-F^1) 1_{\omega} +g^{\prime}(y)y-g(y) \quad \textrm{in}\quad Q_T,\\
& z=0 \,\, \textrm{on}\,\, \Sigma_T, \quad z(\cdot,0)=u_0 \,\, \textrm{in}\,\, \Omega
\end{aligned}
\right.
\end{equation}
by the control $(f-F^1)\in L^2(\rho_0(s), q_T)$.
\end{remark}

\begin{remark}
We emphasize that the presence of a right hand side term in (\ref{heat-Y1}), namely $\partial_ty-\partial_{xx} y+g(y)-f\,1_{\omega}$, forces us to introduce the non trivial weights $\rho_0(s)$, $\rho_1(s)$   and $\rho(s)$ in the space $\mathcal{A}(s)$. This can be seen in the equality \eqref{solution-p1}:  since $\rho_0^{-1}(s) q$ belongs to $L^2(Q_T)$ for all $q\in P$, we need to impose that $\rho_0(s) B \in L^2(Q_T)$ with here $B= \partial_ty-\partial_{xx} y+g(y)-f\,1_{\omega}$. Working with the linearized equation \eqref{NL_z} (introduced in \cite{EFC-EZ}) which does not make appear any right hand side, we may avoid the introduction of Carleman type weights. Actually, \cite{EFC-EZ} considers controls of minimal $L^\infty(q_T)$ norm. 
Introduction of weights allows however the characterization \eqref{solution-p1}, which is very convenient at the practical level. We refer to \cite{EFC-MUNCH-SEMA} where this is discussed at length. 

We also emphasize that we have considered bounded weights at the initial time $t=0$ because of the constraints ``$\rho(s) y\in L^2(Q_T)$" and ``$y(0)=u_0$ in $\Omega$" 
appearing in the set $\mathcal{A}(s)$.

\end{remark}

\subsection{Main properties of the functional $E$}

The interest of the minimal controlled pair $(Y^1,F^1)\in \mathcal{A}_0(s)$ lies in the following result.

\begin{prop}\label{proposition4}
For any  $(y,f)\in \mathcal{A}(s)$ and $s\ge \max\big( \|g'(y)\|_{L^\infty(Q_T)}^{2/3},s_0\big)$, let $(Y^1,F^1)\in \mathcal{A}_0(s)$ defined in Definition \ref{def_Y1F1}. Then the derivative of $E(s)$ at the point $(y,f)\in \mathcal{A}(s)$ along the direction $(Y^1,F^1)$ given by $E^{\prime}(s,y,f)\cdot (Y^1,F^1):=\lim_{\eta\to 0,\eta\neq 0} \frac{E(s,(y,f)+\eta (Y^1,F^1))-E(s,y,f)}{\eta}$ satisfies 
\begin{equation}\label{estimateEEprime}
E^{\prime}(s,y,f)\cdot (Y^1,F^1)=2E(s,y,f).
\end{equation}
\end{prop}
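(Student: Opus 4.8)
The plan is to compute the Gâteaux derivative of $E(s,\cdot,\cdot)$ directly from its definition and then exploit the equation \eqref{heat-Y1} satisfied by $(Y^1,F^1)$. First I would write, for $\eta\neq 0$ small, the increment
$$
E(s,(y,f)+\eta(Y^1,F^1)) = \frac12\bigl\Vert \rho_0(s)\bigl(\partial_t(y+\eta Y^1)-\partial_{xx}(y+\eta Y^1)+g(y+\eta Y^1)-(f+\eta F^1)1_\omega\bigr)\bigr\Vert^2_{L^2(Q_T)}.
$$
The key is that $g$ is of class $C^1$, so $g(y+\eta Y^1) = g(y)+\eta\, g'(y)Y^1 + o(\eta)$ pointwise, and one checks (using $\rho_0(s)\leq\rho(s)$, the bound $\|Y^1\|_{L^\infty(Q_T)}\leq Ce^{-3s/2}\|(Y^1,F^1)\|_{\mathcal{A}_0(s)}$ from Remark \ref{remark2}, and $\|\rho(s)Y^1\|_{L^2(Q_T)}<\infty$, together with the growth hypothesis \ref{growth_condition} on $g'$) that the remainder is $o(\eta)$ in $L^2(\rho_0(s);Q_T)$, so the difference quotient converges. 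Denoting $L_y := \partial_t y-\partial_{xx}y+g(y)-f1_\omega$, the limit is
$$
E'(s,y,f)\cdot(Y^1,F^1) = \int_{Q_T}\rho_0^2(s)\,L_y\,\bigl(\partial_tY^1-\partial_{xx}Y^1+g'(y)Y^1-F^11_\omega\bigr).
$$

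Next I would substitute the defining equation \eqref{heat-Y1} for $(Y^1,F^1)$, which says precisely that
$$
\partial_tY^1-\partial_{xx}Y^1+g'(y)Y^1-F^11_\omega = \partial_ty-\partial_{xx}y+g(y)-f1_\omega = L_y.
$$
Therefore the integrand becomes $\rho_0^2(s)\,L_y^2$, and
$$
E'(s,y,f)\cdot(Y^1,F^1) = \int_{Q_T}\rho_0^2(s)\,|L_y|^2 = 2\cdot\frac12\bigl\Vert\rho_0(s)L_y\bigr\Vert^2_{L^2(Q_T)} = 2E(s,y,f),
$$
which is exactly \eqref{estimateEEprime}.

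The only genuine obstacle is the rigorous justification that the difference quotient has a limit, i.e. that $E(s,\cdot,\cdot)$ is Gâteaux-differentiable at $(y,f)$ in the direction $(Y^1,F^1)$ with the stated derivative. This requires controlling the nonlinear term: one must show $\eta^{-1}\bigl(g(y+\eta Y^1)-g(y)-\eta g'(y)Y^1\bigr)\to 0$ in $L^2(\rho_0(s);Q_T)$. Since $y$ and $Y^1$ are both bounded on $Q_T$ (Remark \ref{remark2}), the values $y+\eta Y^1$ stay in a fixed compact interval for $|\eta|\leq 1$, on which $g'$ is uniformly continuous; writing the remainder via $\int_0^1\bigl(g'(y+t\eta Y^1)-g'(y)\bigr)Y^1\,dt$ and using $\|\rho_0(s)Y^1\|_{L^2(Q_T)}\leq\|\rho(s)Y^1\|_{L^2(Q_T)}<\infty$, dominated convergence gives the claim. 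I would also remark that all terms are finite because $(y,f)\in\mathcal{A}(s)$ ensures $\rho_0(s)L_y\in L^2(Q_T)$ (using \eqref{estim-rho0g}) and $(Y^1,F^1)\in\mathcal{A}_0(s)$ ensures $\rho_0(s)(\partial_tY^1-\partial_{xx}Y^1)$, $\rho_0(s)F^11_\omega$ and $\rho_0(s)g'(y)Y^1$ all lie in $L^2(Q_T)$, so the pairing defining the derivative is a legitimate $L^2$ inner product.
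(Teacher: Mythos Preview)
Your proof is correct and follows essentially the same route as the paper: expand the quadratic functional, identify the linear term as the inner product $\int_{Q_T}\rho_0^2(s)\,L_y\,(\partial_tY^1-\partial_{xx}Y^1+g'(y)Y^1-F^11_\omega)$, justify the remainder is $o(\eta)$ in $L^2(\rho_0(s);Q_T)$ via dominated convergence (using $y,Y^1\in L^\infty(Q_T)$ and continuity of $g'$), and then substitute the defining equation \eqref{heat-Y1}. The only cosmetic difference is that the paper carries out the differentiability argument for a general direction $(Y,F)\in\mathcal{A}_0(s)$ and also records that $E'(s,y,f)$ is a continuous linear form, while you work directly with $(Y^1,F^1)$, which is all that the statement requires.
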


\begin{proof}  We preliminary check that for all $(Y,F)\in\mathcal{A}_0(s)$,  $E(s)$  is differentiable at the point $(y,f)\in \mathcal{A}(s)$ along the direction $(Y,F)\in \mathcal{A}_0(s)$. For all $\lambda\in\R$, simple computations lead to the equality 
$$
\begin{aligned}
E(s,y+\lambda Y,f+\lambda F) =E(s,y,f)+ \lambda  E^{\prime}(s,y,f)\cdot (Y,F) + h\big(s,(y,f),\lambda (Y,F)\big)
\end{aligned}
$$
with 
\begin{equation}\label{Efirst}
E^{\prime}(s,y,f)\cdot (Y,F)=\biggl(\rho_0(s) (\partial_ty-\partial_{xx} y+ g(y)-f\, 1_\omega),   \rho_0(s) (\partial_tY-\partial_{xx} Y+ g^\prime(y)Y-F\, 1_\omega)\biggr)_{L^2(Q_T)}
\end{equation}
and
$$
\begin{aligned}
h(s,(y,f),\lambda (Y,F)):=& \lambda \biggl(\rho_0(s) (\partial_tY-\partial_{xx} Y+ g^\prime(y)Y-F\, 1_\omega),\rho_0(s) l(y,\lambda Y)\biggl)_{L^2(Q_T)}\\
& +\frac{\lambda ^2}2\| \rho_0(s) (\partial_tY-\partial_{xx} Y+ g^\prime(y)Y-F\, 1_\omega)\|_{L^2(Q_T)}^2\\
& +\biggl(\rho_0(s) (\partial_ty-\partial_{xx} y+ g(y)-f\, 1_\omega),\rho_0(s) l(y,\lambda Y)\biggr)_{L^2(Q_T)}\\
&+ \frac{1}{2}\|\rho_0(s) l(y,\lambda Y)\|_{L^2(Q_T)}^2
\end{aligned}
$$ 
where $l(y,\lambda Y):=g(y+\lambda Y)-g(y)-\lambda g^{\prime}(y)Y$.

The application $(Y,F)\to E^{\prime}(s,y,f)\cdot (Y,F)$ is linear and continuous from $\mathcal{A}_0(s)$ to $\mathbb{R}$ as it satisfies using  (\ref{estimateF1Y1}),  (\ref{estimateF1Y1rho}) and (\ref{estimaterhogY1}) :
\begin{equation}
\begin{aligned}\label{45}
\vert E^{\prime}&(s,y,f)\cdot (Y,F)\vert \\
& \leq \Vert \rho_0(s) (\partial_ty-\partial_{xx} y+ g(y)-f\, 1_\omega)\Vert_{L^2(Q_T)} \Vert \rho_0(s) (\partial_tY-\partial_{xx} Y+ g^\prime(y)Y-F\, 1_\omega)\Vert_{L^2(Q_T)}\\
& \leq \sqrt{2E(s,y,f)} \biggl(\Vert \rho_0(s)(\partial_tY-\partial_{xx} Y)\Vert_2+ \Vert\rho_0(s) F\Vert_{L^2(q_T)} + \Vert \rho_0(s) g^\prime(y)Y\Vert_{L^2(Q_T)} \biggr)\\
& \leq \sqrt{6}\big(1+ \|g'(y)\|_{L^\infty(Q_T)}\big) \sqrt{E(s,y,f)}  \Vert (Y,F)\Vert_{\mathcal{A}_0(s)}.  
\end{aligned}
\end{equation}
Similarly, for all $\lambda \in\R^\star$
$$
\begin{aligned}
\biggl|\frac{1}{\lambda }h\big(s,(y,f),\lambda (Y,F)\big)\biggr| \leq
&  \biggl(|\lambda| \Vert \rho_0(s) (\partial_tY-\partial_{xx} Y+ g^\prime(y)Y-F\, 1_\omega)\Vert_{L^2(Q_T)} +\sqrt{2E(s,y,f)}\\
&\hskip 2cm +\frac12\Vert \rho_0(s) l(y,\lambda Y)\Vert_{L^2(Q_T)}\biggl)\frac1{|\lambda| }\Vert \rho_0(s) l(y,\lambda Y)\Vert_{L^2(Q_T)}\\
&+\frac{|\lambda| }2\Vert \rho_0(s) (\partial_tY-\partial_{xx} Y+ g^\prime(y)Y-F\, 1_\omega)\Vert_{L^2(Q_T)}^2.
\end{aligned}
$$
Since $g'\in\mathcal{C}(\R)$ we have, a.e in $Q_T$ :
$\Big|\frac1\lambda l(y,\lambda Y\Big|=\Big|\frac{g(y+\lambda Y)-g(y)}\lambda-g'(y)Y\Big|\to 0$ as $\lambda\to 0$
and, since $Y\in L^\infty(Q_T)$ and $y\in L^\infty(Q_T)$, a.e in $Q_T$
$$\Big|\frac1\lambda l(y,\lambda Y\Big|=\Big|\frac{g(y+\lambda Y)-g(y)}\lambda-g'(y)Y\Big|\le \big(\sup_{\theta\in[0,1]}\|g'(y+\theta Y)\|_{L^\infty(Q_T)}+\|g'(y)\|_{L^\infty(Q_T)}\big)|Y|$$
and therefore (recalling that $\rho_0\le \rho$) 
$$\begin{aligned}
\frac1{|\lambda| }\Vert \rho_0(s) l(y,\lambda Y)\Vert_{L^2(Q_T)}
&\le \big(\sup_{\theta\in[0,1]}\|g'(y+\theta Y)\|_{L^\infty(Q_T)}+\|g'(y)\|_{L^\infty(Q_T)}\big)\|\rho_0 Y\|_{L^2(Q_T)}\\
&\le \big(\sup_{\theta\in[0,1]}\|g'(y+\theta Y)\|_{L^\infty(Q_T)}+\|g'(y)\|_{L^\infty(Q_T)}\big)\|\rho Y\|_{L^2(Q_T)}.
\end{aligned}
$$
It then follows from the Lebesgue dominated convergence theorem that $\frac1{\lambda }\Vert \rho_0(s) l(y,\lambda Y)\Vert_{L^2(Q_T)}\to 0$ as $\lambda\to 0$ and therefore that $h(s,(y,f),\lambda (Y,F))=o(\lambda)$.
Thus the functional $E(s)$  is differentiable at the point $(y,f)\in \mathcal{A}(s)$ along the direction $(Y,F)\in\mathcal{A}_0(s)$.
Eventually, the equality (\ref{estimateEEprime}) follows from the definition of the pair $(Y^1,F^1)$ given in (\ref{heat-Y1}).
\end{proof} 

\vskip 0.25cm
Remark that from the equality \eqref{Efirst}, the derivative $E^{\prime}(s,y,f)$ is independent of $(Y,F)$. We can then define the norm $\Vert E^{\prime}(s,y,f)\Vert_{(\mathcal{A}_0(s))^{\prime}}:= \sup_{(Y,F)\in \mathcal{A}_0(s), (Y,F)\neq (0,0)} \frac{E^{\prime}(s,y,f)\cdot (Y,F)}{\Vert (Y,F)\Vert_{\mathcal{A}_0(s)}}$ associated to $\mathcal{A}_0^\prime(s)$, the set of the linear and continuous applications from $\mathcal{A}_0(s)$ to $\mathbb{R}$.

\vskip 0.25cm

Combining the equality \eqref{estimateEEprime} and the inequality \eqref{estimateF1Y1}, we deduce the following estimates of $E(s,y,f)$ in term of the norm of $E^\prime(s,y,f)$.

\begin{prop}
For any  $(y,f)\in \mathcal{A}(s)$ and $s\ge \max\big( \|g'(y)\|_{L^\infty(Q_T)}^{2/3},s_0\big)$, the inequalities hold  true
$$
\frac1{\sqrt{6}\big(1+ \|g'(y)\|_{L^\infty(Q_T)}\big) } \|E'(s,y,f)\|_{ \mathcal{A}_0^\prime(s)}\le \sqrt{E(s,y,f)}\le C  \|E'(s,y,f)\|_{ \mathcal{A}_0^\prime(s)}
$$
where $C>0$ is the constant appearing in Proposition \ref{proposition3}.
\end{prop}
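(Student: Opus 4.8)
The plan is to derive both inequalities directly from the two facts already established, namely the identity $E^{\prime}(s,y,f)\cdot (Y^1,F^1)=2E(s,y,f)$ from Proposition \ref{proposition4} and the bound $\Vert (Y^1,F^1)\Vert_{\mathcal{A}_0(s)}\le C\sqrt{E(s,y,f)}$ from Proposition \ref{proposition3}, together with the continuity estimate \eqref{45} for $E^\prime(s,y,f)$.

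For the \emph{left} inequality: by definition of the dual norm, for every $(Y,F)\in \mathcal{A}_0(s)$ with $(Y,F)\neq (0,0)$ one has $E^\prime(s,y,f)\cdot (Y,F)\le \Vert E^\prime(s,y,f)\Vert_{\mathcal{A}_0^\prime(s)}\,\Vert (Y,F)\Vert_{\mathcal{A}_0(s)}$. On the other hand, estimate \eqref{45} shows that $E^\prime(s,y,f)\cdot(Y,F)\le \sqrt{6}\big(1+\Vert g^\prime(y)\Vert_{L^\infty(Q_T)}\big)\sqrt{E(s,y,f)}\,\Vert (Y,F)\Vert_{\mathcal{A}_0(s)}$; taking the supremum over normalized $(Y,F)$ gives $\Vert E^\prime(s,y,f)\Vert_{\mathcal{A}_0^\prime(s)}\le \sqrt{6}\big(1+\Vert g^\prime(y)\Vert_{L^\infty(Q_T)}\big)\sqrt{E(s,y,f)}$, which is exactly the left-hand inequality after dividing by the prefactor. (If $E(s,y,f)=0$ both sides vanish and there is nothing to prove.)

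For the \emph{right} inequality: apply the definition of the dual norm to the particular direction $(Y^1,F^1)$. If $(Y^1,F^1)\neq (0,0)$, then
$$
\Vert E^\prime(s,y,f)\Vert_{\mathcal{A}_0^\prime(s)}\ge \frac{E^\prime(s,y,f)\cdot (Y^1,F^1)}{\Vert (Y^1,F^1)\Vert_{\mathcal{A}_0(s)}}=\frac{2E(s,y,f)}{\Vert (Y^1,F^1)\Vert_{\mathcal{A}_0(s)}}\ge \frac{2E(s,y,f)}{C\sqrt{E(s,y,f)}}=\frac{2}{C}\sqrt{E(s,y,f)},
$$
using Proposition \ref{proposition4} for the middle equality and \eqref{estimateF1Y1} for the last inequality; this yields $\sqrt{E(s,y,f)}\le \tfrac{C}{2}\Vert E^\prime(s,y,f)\Vert_{\mathcal{A}_0^\prime(s)}\le C\Vert E^\prime(s,y,f)\Vert_{\mathcal{A}_0^\prime(s)}$. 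The degenerate case $(Y^1,F^1)=(0,0)$ forces $E(s,y,f)=0$ by \eqref{estimateEEprime}, so the inequality is trivial there as well.

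Nothing here is genuinely hard; the only point requiring a little care is the handling of the degenerate cases ($E(s,y,f)=0$, or $(Y^1,F^1)=(0,0)$) so that no division by zero occurs, and making sure the constant $C$ in the right-hand inequality is literally the one from Proposition \ref{proposition3} (one absorbs the harmless factor $1/2$ into it). One should also note that the hypothesis $s\ge \max(\Vert g^\prime(y)\Vert_{L^\infty(Q_T)}^{2/3},s_0)$ is exactly what is needed for both Propositions \ref{proposition3} and \ref{proposition4} to apply.
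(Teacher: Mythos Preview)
Your proof is correct and follows essentially the same approach as the paper's own proof: both inequalities come from combining the identity \eqref{estimateEEprime}, the bound \eqref{estimateF1Y1}, and the continuity estimate \eqref{45}, and you handle the degenerate cases (which the paper does not explicitly address) with appropriate care.
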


\begin{proof}  \eqref{estimateEEprime} rewrites
$E(s,y,f)=\frac12 E^{\prime}(s,y,f)\cdot (Y^1,F^1)$
where $(Y^1,F^1)\in \mathcal{A}_0(s)$ is solution of (\ref{heat-Y1})  and therefore, with \eqref{estimateF1Y1}
$$\begin{aligned}
E(s,y,f)
&\le\frac12 \|E'(s,y,f)\|_{ \mathcal{A}_0^\prime(s)} \|(Y^1,F^1)\|_{ \mathcal{A}_0(s)}\\
&\le C \|E'(s,y,f)\|_{ \mathcal{A}_0^\prime(s)}\sqrt{E(s,y,f)}.
\end{aligned}
$$
On the other hand, using (\ref{45}), for all $(Y,F)\in\mathcal{A}_0(s)$   :
$$
\vert E^{\prime}(s,y,f)\cdot (Y,F)\vert \leq  \sqrt{6}\big(1+\|g'(y)\|_{L^\infty(Q_T)}\big)   \sqrt{E(s,y,f)} \Vert (Y,F)\Vert_{\mathcal{A}_0(s)}$$
leading to the left inequality. 
\end{proof} 

In particular, any \textit{critical} point $(y,f)\in \mathcal{A}(s)$ for $E(s)$ (i.e. for which $E^\prime(s,y,f)$ vanishes) is a zero for $E(s)$, a pair solution of the controllability problem. 
In other words, any sequence $(y_k,f_k)_{k\in \mathbb{N}}$ of $\mathcal{A}(s)$ satisfying $\Vert E^\prime(s,y_k,f_k)\Vert_{\mathcal{A}_0^\prime(s)}\to 0$ as $k\to \infty$ and for which $(\Vert g^\prime(y_k)\Vert_\infty)_{k\in \mathbb{N}}$ is bounded
is such that $E(s,y_k,f_k)\to 0$ as $k\to \infty$. We insist that this property does not imply the convexity of the functional $E(s)$ (nor \textit{a fortiori} the strict convexity of $E(s)$, which actually does not hold here in view of the multiple zeros for $E(s)$) but show that a minimizing sequence for $E(s)$ can not be stuck in a local minimum. Our least-squares algorithm, designed in the next section, is based on that property. 

Eventually, the left inequality indicates that the functional $E(s)$ is flat around its zero set. As a consequence, gradient based minimizing sequences for $E(s)$ are inefficient as they usually achieve a low rate of convergence (we refer to \cite{AM-PP-2014}
and also \cite{lemoine-Munch-Pedregal-AMO-20} devoted to the Navier-Stokes equation where this phenomenon is observed).

We end this section with the following crucial estimate.

\begin{lemma}\label{estimW1lemma}
Assume that $g$ satisfies \ref{constraint_g_holder} for some $p\in [0,1]$. Let  $(y,f)\in \mathcal{A}(s)$,  $s\ge \max\big( \|g'(y)\|_{L^\infty(Q_T)}^{2/3},s_0\big)$,   and $(Y^1,F^1)\in \mathcal{A}_0(s)$ given in Definition \ref{def_Y1F1} associated with $(y,f)$. For any $\lambda\in \mathbb{R}_+$ the following estimate holds 
\begin{equation} 
\sqrt{E\big(s,(y,f)-\lambda (Y^1,F^1)\big)}  \leq    \sqrt{E(s,y,f)} \biggl(\vert 1-\lambda\vert  +\lambda^{p+1}c_1(s) \sqrt{E(s,y,f)}^p\biggr) \label{estimW1}
\end{equation}
with
\begin{equation}
\label{c1}
c_1(s) := \frac{C^{1+p}}{1+p}s^{-3/2} e^{-\frac{3p}2 s} [g']_p.
\end{equation}
\end{lemma}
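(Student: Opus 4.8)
The plan is to compute $E(s,(y,f)-\lambda(Y^1,F^1))$ directly from the definition and exploit the key identity characterizing $(Y^1,F^1)$, namely that it solves (\ref{heat-Y1}), which says exactly that
$\partial_t Y^1 - \partial_{xx}Y^1 + g'(y)Y^1 - F^1 1_\omega = \partial_t y - \partial_{xx}y + g(y) - f1_\omega$.
First I would write the residual of the shifted pair. Setting $N(y,f) := \partial_t y - \partial_{xx}y + g(y) - f1_\omega$, a Taylor-type expansion of $g$ gives
$N\big((y,f)-\lambda(Y^1,F^1)\big) = (1-\lambda) N(y,f) - \big(g(y-\lambda Y^1) - g(y) + \lambda g'(y) Y^1\big)$,
where I have used the defining equation of $(Y^1,F^1)$ to replace $-\lambda\big(\partial_t Y^1 - \partial_{xx}Y^1 + g'(y)Y^1 - F^1 1_\omega\big)$ by $-\lambda N(y,f)$. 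Denote the bracketed nonlinear remainder by $\ell_\lambda := g(y-\lambda Y^1) - g(y) + \lambda g'(y)Y^1$. Then by the triangle inequality in $L^2(\rho_0(s);Q_T)$,
$\sqrt{2E(s,(y,f)-\lambda(Y^1,F^1))} = \|\rho_0(s) N((y,f)-\lambda(Y^1,F^1))\|_{L^2(Q_T)} \le |1-\lambda|\,\|\rho_0(s)N(y,f)\|_{L^2(Q_T)} + \|\rho_0(s)\ell_\lambda\|_{L^2(Q_T)} = |1-\lambda|\sqrt{2E(s,y,f)} + \|\rho_0(s)\ell_\lambda\|_{L^2(Q_T)}$.

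The main work is to estimate $\|\rho_0(s)\ell_\lambda\|_{L^2(Q_T)}$ using the Hölder hypothesis \ref{constraint_g_holder}. Writing $\ell_\lambda = -\int_0^1 \big(g'(y-\mu\lambda Y^1) - g'(y)\big)\lambda Y^1\,d\mu$ and applying \ref{constraint_g_holder}, one gets pointwise
$|\ell_\lambda| \le \int_0^1 [g']_p |\mu\lambda Y^1|^p\,|\lambda Y^1|\,d\mu = \frac{[g']_p}{1+p}\,\lambda^{1+p}|Y^1|^{1+p}$.
Hence $\|\rho_0(s)\ell_\lambda\|_{L^2(Q_T)} \le \frac{[g']_p}{1+p}\lambda^{1+p}\|\rho_0(s)|Y^1|^{1+p}\|_{L^2(Q_T)} = \frac{[g']_p}{1+p}\lambda^{1+p}\big\||\rho_0(s)^{1/(1+p)}Y^1|^{1+p}\big\|_{L^2(Q_T)}$. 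Since $\rho_0(s)\le\rho(s)$, and using Remark \ref{remark2} / Remark \ref{remark-rho0} — precisely $\|Y^1\|_{L^\infty(Q_T)}\le Ce^{-3s/2}\|(Y^1,F^1)\|_{\mathcal{A}_0(s)}$ and $\rho_0\ge e^{3s/2}$ — I would bound $\rho_0(s)|Y^1|^{1+p}$ by distributing: $|Y^1|^{1+p} = |Y^1|^p|Y^1| \le \|Y^1\|_{L^\infty(Q_T)}^p |Y^1| \le (Ce^{-3s/2})^p\|(Y^1,F^1)\|_{\mathcal{A}_0(s)}^p |Y^1|$, so that $\|\rho_0(s)|Y^1|^{1+p}\|_{L^2(Q_T)} \le (Ce^{-3s/2})^p\|(Y^1,F^1)\|_{\mathcal{A}_0(s)}^p\|\rho(s)Y^1\|_{L^2(Q_T)}$. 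Then (\ref{estimateF1Y1}) gives $\|(Y^1,F^1)\|_{\mathcal{A}_0(s)}\le C\sqrt{E(s,y,f)}$ and, from (\ref{estimateF1Y1rho}), $\|\rho(s)Y^1\|_{L^2(Q_T)}\le Cs^{-3/2}\sqrt{E(s,y,f)}$. Collecting constants, $\|\rho_0(s)\ell_\lambda\|_{L^2(Q_T)} \le \frac{[g']_p}{1+p} C^p e^{-3ps/2} C^{1+p} s^{-3/2}\lambda^{1+p}\sqrt{E(s,y,f)}^{1+p}$, which after absorbing the factor $\sqrt 2$ coming from $\sqrt{2E}$ versus $\sqrt E$ yields the claimed constant $c_1(s) = \frac{C^{1+p}}{1+p}s^{-3/2}e^{-3ps/2}[g']_p$ and the estimate (\ref{estimW1}).

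The main obstacle I anticipate is bookkeeping the weight $\rho_0(s)$ in the nonlinear term: one must convert the factor $|Y^1|^{1+p}$ into one factor of $\rho(s)|Y^1|$ (controlled in $L^2$ by the energy) times a factor of $\|Y^1\|_{L^\infty}^p$ (controlled by $e^{-3ps/2}$ times the $\mathcal{A}_0(s)$-norm), and this is exactly where the exponential gain $e^{-3ps/2}$ in $c_1(s)$ originates. A minor point to check is that $p=0$ is consistent: there $[g']_0 = 2\|g'\|_\infty$, $\ell_\lambda$ is estimated directly by $2\|g'\|_\infty|\lambda Y^1|$, and the formula for $c_1(s)$ matches. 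One should also verify the elementary integral inequality $\int_0^1 \mu^p\,d\mu = \frac{1}{1+p}$ used above and that $(y,f)-\lambda(Y^1,F^1)\in\mathcal{A}(s)$ so that $E$ is well-defined there, which is immediate since $\mathcal{A}(s) = (y,f) + \mathcal{A}_0(s)$ and $(Y^1,F^1)\in\mathcal{A}_0(s)$.
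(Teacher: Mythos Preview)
Your proof is correct and follows essentially the same route as the paper: expand the residual using the defining equation of $(Y^1,F^1)$ to isolate the term $(1-\lambda)N(y,f)$ plus the Taylor remainder $\ell_\lambda$, bound $|\ell_\lambda|\le \frac{[g']_p}{1+p}\lambda^{1+p}|Y^1|^{1+p}$ via the integral form and \ref{constraint_g_holder}, then split $\rho_0(s)|Y^1|^{1+p}\le \|Y^1\|_{L^\infty}^p\,\rho(s)|Y^1|$ and apply Remark~\ref{remark2} together with the estimates (\ref{estimateF1Y1}) and (\ref{estimateF1Y1rho}). The only cosmetic differences are the parametrization of the integral remainder and the bookkeeping of the generic constant $C$; your handling of the $\sqrt{2}$ and of the cases $p=0$ is fine.
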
 

\begin{proof}  
For any $(x,y)\in \R^2$, $y\not=0$,  and $\lambda\in \R$, we write $g(x+\lambda y)-g(x)=\int_0^\lambda y g'(x+\xi y)d\xi$ leading to 
$$
\begin{aligned}
|g(x+\lambda y)-g(x)-\lambda g'(x)y|
&\le \int_0^\lambda |y| |g'(x+\xi y)-g'(x)|d\xi\\
&\le \int_0^\lambda |y|^{1+p}|\xi|^p\frac{|g'(x+\xi y)-g'(x)|}{|\xi y|^p}d\xi\\
&\le  [g']_p |y|^{1+p}\frac{|\lambda|^{1+p}}{1+p}.
\end{aligned}
$$
It follows that 
$$\rho_0(s)|\frac{1}{\lambda }l(y,\lambda Y^1)|=\rho_0(s)\Big|\frac{g(y+\lambda Y^1)-g(y)}{\lambda }-g'(y)Y^1 \Big |\le 
  [g']_p\rho_0(s)|Y^1|^{1+p}\frac{|\lambda|^{p}}{1+p}
 $$
 and thus, since $Y^1\in L^\infty(Q_T)$ (see Remark \ref{remark2}) and $\rho_0(s)\le\rho(s)$ :
\begin{equation}\label{38}\begin{aligned}
\frac1{|\lambda| }\Vert \rho_0(s) l(y,\lambda Y^1)\Vert_{L^2(Q_T)}
& \le 
  [g']_p \frac{|\lambda|^{p}}{1+p}\Vert\rho_0(s)|Y^1|^{1+p}\Vert_{L^2(Q_T)}\\
 & \le  [g']_p \frac{|\lambda|^{p}}{1+p}\Vert Y^1\Vert_{L^\infty(Q_T)}^p \Vert\rho(s) Y^1\Vert_{L^2(Q_T)}\\
\end{aligned}\end{equation}
and obtain that
\begin{equation}\label{E_expansionb}
\begin{aligned}
&2 E\big(s,(y,f)-\lambda (Y^1,F^1)\big) \\
&=\biggl\Vert \rho_0(s)\big(\partial_ty-\partial_{xx} y+g(y)-f\,1_\omega\big)-
\lambda \rho_0(s) \big(\partial_tY^1-\partial_{xx} Y^1+g^\prime(y)Y^1-F\,1_\omega\big)+\rho_0(s)l(y,-\lambda Y^1)\biggr\Vert^2_{L^2(Q_T)}\\
& =\biggl\Vert \rho_0(s)(1-\lambda)\big(\partial_ty-\partial_{xx} y+g(y)-f\,1_\omega\big)+\rho_0(s)l(y,-\lambda Y^1)\biggr\Vert^2_{L^2(Q_T)}\\
&\le \Big(\bigl\Vert \rho_0(s)(1-\lambda)\big(\partial_ty-\partial_{xx} y+g(y)-f\,1_\omega\big)\bigr \Vert_{L^2(Q_T)} +\bigl \Vert\rho_0(s)l(y,-\lambda Y^1)\bigr\Vert_{L^2(Q_T)}\Big)^2\\
&\leq 2 \biggl( \vert 1-\lambda\vert \sqrt{E(s,y,f)}+\frac{\lambda^{p+1}}{\sqrt{2}(p+1)}  [g']_p\Vert Y^1\Vert_{L^\infty(Q_T)}^p \Vert\rho(s) Y^1\Vert_{L^2(Q_T)}\biggr)^2.
\end{aligned}
\end{equation}
Using Remark \ref{remark2} and estimates (\ref{estimateF1Y1})  and (\ref{estimateF1Y1rho}), we obtain
$$
\begin{aligned}
2 E\big(s,(y,f)&-\lambda (Y^1,F^1)\big)  
\leq 2 \biggl( \vert 1-\lambda\vert \sqrt{E(s,y,f)}+Ce^{-\frac{3p}2 s}\frac{\lambda^{p+1}}{p+1}  [g']_p\|(Y^1,F^1)\|_{\mathcal{A}_0(s)}^{p}s^{-3/2}\sqrt{E(s,y,f)}\biggr)^2\\
 &\leq 2 \biggl( \vert 1-\lambda\vert \sqrt{E(s,y,f)}+\lambda^{p+1}\frac{Cs^{-3/2}e^{-\frac{3p}2 s}C^p}{p+1}  [g']_pE(s,y,f)^{\frac{p+1}2}\biggr)^2
\end{aligned}
$$
from which we get \eqref{estimW1}.
\end{proof}

\section{Convergence of the least-squares method}\label{sec_convergence}

 We now examine the convergence of an appropriate sequence $(y_k,f_k)\in \mathcal{A}(s)$. In this respect, we observe from the equality (\ref{estimateEEprime}) that, for any $(y,f)\in \mathcal{A}(s)$, $-(Y^1,F^1)$ given in Definition \ref{def_Y1F1}, is a descent direction for the functional $E(s)$ at the point $(y,f)$, as soon as $s\ge  \max( \|g'(y)\|_{L^\infty(Q_T)}^{2/3},s_0)$. Therefore, we can define at least formally, for any fixed $m\geq 1$, a minimizing sequence $(y_k,f_k)_{k\in\mathbb{N}}\in \mathcal{A}(s)$ as follows: 
\begin{equation}
\label{algo_LS_Y}
\left\{
\begin{aligned}
&(y_0,f_0) \in \mathcal{A}(s), \\
&(y_{k+1},f_{k+1})=(y_k,f_k)-\lambda_k (Y^1_k,F_k^1), \quad k\ge 0, \\
& \lambda_k= \textrm{argmin}_{\lambda\in[0,m]} E\big(s,(y_k,f_k)-\lambda (Y^1_k,F_k^1)\big)    
\end{aligned}
\right.
\end{equation}
where $(Y^1_k,F_k^1)\in \mathcal{A}_0(s)$ is the minimal controlled pair solution of 
\begin{equation}
\label{heat-Y1k}
\left\{
\begin{aligned}
& \partial_tY^1_{k} - \partial_{xx} Y^1_k +  g^{\prime}(y_k) Y^1_k = F^1_k 1_{\omega}+ \partial_ty_{k}-\partial_{xx} y_k+g(y_k)-f_k 1_\omega \quad \textrm{in}\quad Q_T,\\
& Y_k^1=0 \,\,\, \textrm{on}\,\,\, \Sigma_T, \quad Y_k^1(\cdot,0)=0 \,\,\, \textrm{in}\,\,\, \Omega
\end{aligned}
\right.
\end{equation}
associated with $(y_k,f_k)\in \mathcal{A}(s)$. In particular, the pair $Y^1_k,F^1_k$ vanishes when $E(s,y_k,f_k)$ vanishes. 
The real number $m\geq 1$ is arbitrarily fixed and is introduced in order to keep the sequence $(\lambda_k)_{k\in\mathbb{N}}$ bounded.

We highlight that, in order to give a meaning to (\ref{algo_LS_Y}), we need to prove that we can choose  the parameter $s$ \underline{independent of $k$}, that is  $s\ge \max\big( \|g'(y_k)\|_{L^\infty(Q_T)}^{2/3},s_0\big)$ for all $k\in\N$. In this respect, it suffices to prove that there exists $M>0$ such that  $\|y_k\|_{L^\infty(Q_T)}\le M$ for every $k\in\N$. Under \ref{growth_condition}, this implies that  $\|g'(y_k)\|_{L^\infty(Q_T)}\le \psi(M)$ for every $k\in \mathbb{N}$, where $\psi$ is defined in (\ref{estimg'}). We shall prove the existence of such $M$ by induction. 


\begin{prop}\label{Estimation yk+1} 
Assume that $g$ satisfies \ref{growth_condition} and \ref{constraint_g_holder} for some $p\in [0,1]$. Let $M>0$ large enough and $s\ge \max\big(C(p)\psi(M)^{2/3},s_0\big)$ with $C(p)=1$ if $p\in(0,1]$, and $C(0)=(2C)^{3/2}$. Let $(y_0,f_0)\in \mathcal{A}(s)$ such that  $M\ge \|y_0\|_{L^\infty(Q_T)}$. Assume that, for some $n\geq 0$, $(y_k,f_k)_{0\leq k\leq n}$ defined from \eqref{algo_LS_Y}  satisfies $\|y_k\|_{L^\infty(Q_T)}\le M$. Then
\begin{equation}\label{equation44}
\begin{aligned}
\|y_{n+1}\|_{L^\infty(Q_T)}
 \le & \|y_{0}\|_{L^\infty(Q_T)}+Cm \max\biggl( \frac{p+1}{p}\sqrt{E(s,y_0,f_0)} , \frac{(1+p)^{\frac1p+1}}{p}  c_1^{1/p}(s){E(s,y_0,f_0)}\biggr)
 \end{aligned}
 \end{equation}
 if $p\in (0,1]$
 and 
\begin{equation}\label{equation44bis}
\|y_{n+1}\|_{L^\infty(Q_T)}
 \le  \|y_{0}\|_{L^\infty(Q_T)}+Cm\frac{\sqrt{E(s,y_0,f_0)}}{1-c_1(s)}
 \end{equation}
 if $p=0$.
\end{prop}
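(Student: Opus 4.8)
The plan is to prove the uniform bound on $\|y_{n+1}\|_{L^\infty(Q_T)}$ by telescoping the increments $y_{k+1}-y_k = -\lambda_k(Y_k^1,F_k^1)$ and controlling each $\|Y_k^1\|_{L^\infty(Q_T)}$ by $\sqrt{E(s,y_k,f_k)}$, while simultaneously showing that $\sqrt{E(s,y_k,f_k)}$ decays geometrically fast enough to be summable. First I would use Remark \ref{remark2} together with the estimate \eqref{estimateF1Y1} of Proposition \ref{proposition3} to get $\|Y_k^1\|_{L^\infty(Q_T)} \le Ce^{-\frac32 s}\|(Y_k^1,F_k^1)\|_{\mathcal{A}_0(s)} \le C\sqrt{E(s,y_k,f_k)}$ for all $0\le k\le n$, which is licit since the induction hypothesis $\|y_k\|_{L^\infty(Q_T)}\le M$ and \ref{growth_condition} guarantee $s\ge \max(\|g'(y_k)\|_{L^\infty(Q_T)}^{2/3},s_0)$ throughout. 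Hence
\[
\|y_{n+1}\|_{L^\infty(Q_T)} \le \|y_0\|_{L^\infty(Q_T)} + Cm\sum_{k=0}^{n}\sqrt{E(s,y_k,f_k)},
\]
using $\lambda_k\le m$.

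The core of the argument is then to bound the sum $\sum_{k}\sqrt{E(s,y_k,f_k)}$. For this I would apply Lemma \ref{estimW1lemma}: by optimality of $\lambda_k$, for every admissible $\lambda\in[0,m]$ (in particular $\lambda=1$ when $m\ge 1$, or any $\lambda\le 1$),
\[
\sqrt{E(s,y_{k+1},f_{k+1})} \le \sqrt{E(s,y_k,f_k)}\Bigl(|1-\lambda| + \lambda^{p+1}c_1(s)\sqrt{E(s,y_k,f_k)}^{\,p}\Bigr).
\]
Writing $x_k := \sqrt{E(s,y_k,f_k)}$ and choosing $\lambda$ appropriately (either $\lambda=1$, giving $x_{k+1}\le c_1(s)x_k^{1+p}$, or, when $c_1(s)x_k^p$ is not yet small, an intermediate $\lambda$ producing a fixed contraction factor $<1$), one obtains that $(x_k)$ decreases and, after finitely many steps, contracts super-linearly with exponent $1+p$. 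For $p\in(0,1]$ the worst case is a geometric-type decay with ratio controlled by $\max\bigl(\tfrac{p+1}{p},\ldots\bigr)$ — concretely, one shows by a separate elementary lemma (the kind of scalar recursion lemma that typically precedes such a statement) that $\sum_k x_k \le \max\bigl(\tfrac{p+1}{p}x_0,\ \tfrac{(1+p)^{1/p+1}}{p}c_1^{1/p}(s)x_0^{?}\bigr)$ up to the stated form; for $p=0$ the recursion is simply $x_{k+1}\le c_1(s)x_k$ with $c_1(s)<1$ (this is where the smallness of $\beta$, hence of $[g']_0$, enters via \eqref{c1}), so the geometric series sums to $x_0/(1-c_1(s))$.

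I would then substitute $C(p)\psi(M)^{2/3}\le s$ — which ensures $s\ge \max(\|g'(y_0)\|_{L^\infty(Q_T)}^{2/3},s_0)$ at the base step $k=0$ as well — and combine the summability bound with the telescoped inequality to reach \eqref{equation44} and \eqref{equation44bis}. The constant $C(0)=(2C)^{3/2}$ is dictated by the $p=0$ form of Lemma \ref{carleman}/Theorem \ref{controllability_result}, where the threshold on $s$ involves $\|A\|_{L^\infty(Q_T)}^{2/3}=\|g'(y_k)\|_{L^\infty(Q_T)}^{2/3}$ and an extra factor from absorbing the potential term.

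The main obstacle I anticipate is organizing the scalar recursion $x_{k+1}\le x_k(|1-\lambda_k|+\lambda_k^{p+1}c_1 x_k^p)$ cleanly enough to extract a \emph{summable} bound with the precise constants $\tfrac{p+1}{p}$ and $\tfrac{(1+p)^{1/p+1}}{p}c_1^{1/p}(s)$ appearing in \eqref{equation44}: one must carefully split into the regime where $c_1(s)x_k^p$ is already $\le$ some threshold (pure super-linear decay, a convergent series dominated by its first term) versus the regime where it is not (finitely many steps of fixed geometric contraction obtained by minimizing $|1-\lambda|+\lambda^{p+1}c_1x_k^p$ over $\lambda$, whose minimizer and minimum value must be computed explicitly), and then sum the two pieces. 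Keeping the dependence on $m$, $p$, and $c_1(s)$ uniform in $n$ — so that the final bound genuinely does not involve $n$ — is the delicate bookkeeping point; everything else (the $L^\infty$ bound on $Y_k^1$, the applicability of the Carleman/controllability machinery for the fixed $s$) is a direct citation of the results already established above.
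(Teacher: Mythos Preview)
Your proposal is correct and follows the same structure as the paper's proof: telescope via $\|Y_k^1\|_{L^\infty(Q_T)}\le C\sqrt{E(s,y_k,f_k)}$ (from Remark~\ref{remark2} and Proposition~\ref{proposition3}), then sum the $\sqrt{E}$ sequence using the recursion from Lemma~\ref{estimW1lemma}.

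The one place you overcomplicate matters is the scalar recursion for $p\in(0,1]$. The paper avoids your anticipated two-regime splitting by a monotonicity observation: since $E(s,y_k,f_k)$ is nonincreasing in $k$, so is the optimal contraction factor
\[
p_k(s,\widetilde\lambda_k):=\min_{\lambda\in[0,m]}\bigl(|1-\lambda|+\lambda^{p+1}c_1(s)\sqrt{E(s,y_k,f_k)}^{\,p}\bigr),
\]
hence $\sqrt{E(s,y_{k+1},f_{k+1})}\le p_0(s,\widetilde\lambda_0)\sqrt{E(s,y_k,f_k)}$ for \emph{every} $k\le n$, and the whole sum is dominated by the single geometric series $\sqrt{E(s,y_0,f_0)}/(1-p_0(s,\widetilde\lambda_0))$. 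One then needs only a case split at $k=0$ on whether $c_1^{1/p}(s)\sqrt{E(s,y_0,f_0)}<(p+1)^{-1/p}$ to compute $p_0(s,\widetilde\lambda_0)$ explicitly (either $\le p_0(s,1)\le 1/(p+1)$, or via the interior critical point of $\lambda\mapsto 1-\lambda+\lambda^{p+1}c_1E_0^{p/2}$), and the constants $\tfrac{p+1}{p}$ and $\tfrac{(1+p)^{1/p+1}}{p}$ fall out directly. This dissolves what you flagged as the main obstacle.

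A minor correction for $p=0$: the inequality $c_1(s)<1$ follows from the choice $s\ge C(0)\psi(M)^{2/3}$ (which makes $2Cs^{-3/2}\|g'\|_\infty<1$), not from smallness of $\beta$; the constraint on $\beta$ only enters in the next proposition, where one closes the loop by choosing $M$.
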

We point out that the existence of $(y_0,f_0)\in \A(s)$ follows from Lemma \ref{existence_triplet}.
\begin{proof} 
The inequality $\Vert y_n\Vert_{L^{\infty}(Q_T)}\leq M$ implies that $\Vert g^\prime(y_n)\Vert_{L^\infty(Q_T)}^{2/3}\leq \psi(M)^{2/3}$ and then 
\\ $s\ge \max\big(\psi(M)^{2/3},s_0\big)\geq \max\big( \|g'(y_n)\|_{L^\infty(Q_T)}^{2/3},s_0\big)$. Proposition \ref{proposition3} allows to construct the pair  sequence $(Y^1_n,F^1_n)\in \mathcal{A}_0(s)$ solution of (\ref{heat-Y1}). Then, (\ref{algo_LS_Y}) allows to define $(y_{n+1},f_{n+1})$.
Estimate (\ref{estimW1}) implies that 
\begin{equation} 
\nonumber
\sqrt{E\big(s,(y_k,f_k)-\lambda (Y_k^1,F_k^1)\big)}  \leq   \sqrt{E(s,y_k,f_k)} \biggl(\vert 1-\lambda\vert  +\lambda^{p+1}c_1(s) \sqrt{E(s,y_k,f_k)}^p\biggr) \end{equation}
and then 
\begin{equation}\label{estimW2}
\sqrt{E(s,y_{k+1},f_{k+1})}  \leq  \sqrt{E(s,y_k,f_k)} \min_{\lambda\in [0,m]}p_k(s,\lambda)
\end{equation}
where 
\begin{equation}\label{pk}
p_k(s,\lambda):= \vert 1-\lambda\vert + \lambda^{p+1}c_1(s)  E(s,y_k,f_k)^{p/2}, \quad \forall \lambda\in\mathbb{R}, \, \forall s>0.
\end{equation}
Since $\big(E(s,y_k,f_k)\big)_{0\le k\le n}$ decreases, $\big(p_k({s,\lambda})\big)_{0\le k\le n}$ decreases for all $\lambda$  ($p_k$ do not depend on $k$ if $p=0$) and thus, defining $p_k(s,\widetilde{\lambda_k}):=\min_{\lambda\in[0,m]}p_k(s,\lambda)$, $\big(p_k(s,\widetilde{\lambda_k})\big)_{0\le k\le n}$ decreases as well. \eqref{estimW2} then implies, for all $0\le k\le n-1$, that 
\begin{equation}
 \sqrt{E(s,y_{k+1},f_{k+1})} \leq   \sqrt{E(s,y_k,f_k)}  p_k(s,\widetilde{\lambda_k})
 \leq   \sqrt{E(s,y_k,f_k)}  p_0(s,\widetilde{\lambda_0}). \label{decreaseEk}
\end{equation}

\par\noindent
{\bf First case : $p\in(0,1]$}.
We prove that
\begin{equation}\label{52}
\sum_{k=0}^n \sqrt{E(s,y_k,f_k)}\leq\max\biggl( \frac{1}{p}\sqrt{E(s,y_0,f_0)} , \frac{(1+p)^{\frac1p+1}}{p}  c_2(s){E(s,y_0,f_0)}\biggr)
\end{equation}
where $c_2(s):=c_1^{1/p}(s)$ .
Since $p_0'(s,0)=-1$,  $p_0(s,\widetilde{\lambda_0})< p_0(s,0)=1$ we deduce from (\ref{decreaseEk}) that  :
\begin{equation}\label{sommeE}
\sum_{k=0}^n \sqrt{E(s,y_k,f_k)}\leq \sqrt{E(s,y_0,f_0)} \frac{1-p_0(s,\widetilde{\lambda_0})^{n+1}}{1-p_0(s,\widetilde{\lambda_0})}\leq \frac{\sqrt{E(s,y_0,f_0)}}{1-p_0(s,\widetilde{\lambda_0})}.
\end{equation}
If $c_2(s)\sqrt{E(s,y_0,f_0)}< \frac1{(p+1)^{1/p}}$, we check that $p_0(s,\widetilde{\lambda_0}) \le p_0(s,1)=c_1(s)\sqrt{E(s,y_0,f_0)}^p\leq \frac{1}{p+1}$ and thus
$$\frac{\sqrt{E(s,y_0,f_0)}}{1-p_0(s,\widetilde{\lambda_0})}\le \frac{p+1}{p}\sqrt{E(s,y_0,f_0)}.$$
If $c_2(s)\sqrt{E(s,y_0,f_0)}\ge  \frac1{(p+1)^{1/p}}$,  then for all $\lambda\in[0,1]$, 
$p_0'(s,\lambda)=-1+(p+1)\lambda^p c_1(s) E(s,y_0,f_0)^{p/2}$ and thus $p_0'(s,\lambda)=0$ if and only if $\lambda =\frac{1}{(p+1)^{1/p}c_2(s)\sqrt{E(s,y_0,f_0)}}$ leading to
$$p_0(s,\widetilde{\lambda_0})=1-\frac{p}{(1+p)^{\frac1p+1}}\frac{1}{c_2(s)\sqrt{E(s,y_0,f_0)}}$$
and 
$$
\frac{\sqrt{E(s,y_0,f_0)} }{1-p_0(s,\widetilde{\lambda_0})}\le \frac{(1+p)^{\frac1p+1}}{p} c_2(s){E(s,y_0,f_0)}. 
$$
\eqref{sommeE} then leads to (\ref{52}). Then \eqref{algo_LS_Y} implies that $y_{n+1}=y_0-\sum_{k=0}^n \lambda_k Y^1_k$ and thus, using  (\ref{estimateF1Y1})
$$\begin{aligned}
\|y_{n+1}\|_{L^\infty(Q_T)}
&\le \|y_{0}\|_{L^\infty(Q_T)}+m \sum_{k=0}^n\|Y^1_k\|_{L^\infty(Q_T)}\le \|y_{0}\|_{L^\infty(Q_T)}+Cm \sum_{k=0}^n\|(Y^1_k,F^1_k)\|_{\mathcal{A}_0(s)}\\
&\le \|y_{0}\|_{L^\infty(Q_T)}+C m \sum_{k=0}^n \sqrt{E(s,y_k,f_k)}
\end{aligned}
$$
which gives (\ref{equation44}), using (\ref{52}).

\par\noindent
{\bf Second case : $p=0$}. Recall that for $p=0$,  $\psi(r)=\Vert g^\prime\Vert_{\infty}$ for every $r\in \mathbb{R}$. Then simply $p_k(s,\lambda)=|1-\lambda|+\lambda c_1(s)$ for all $k$ and 
$$
p_0(s,\widetilde{\lambda_0})=\min_{\lambda\in[0,m]}p_0(s,\lambda)=\min_{\lambda\in[0,1]}p_0(s,\lambda)= p_0(s,1)=c_1(s)
$$
with $c_1(s)=Cs^{-3/2}  [g']_{0}=2 C s^{-3/2}\Vert g^\prime\Vert_\infty=2 C s^{-3/2}\psi(M)$. Taking $s$ large enough, precisely $s> \max((2C)^{1/3} \psi(M)^{2/3},s_0)$, we obtain that $c_1(s)<1$. We then have  
for all $0\le k\le n-1$ that 
$$
 \sqrt{E(s,y_{k+1},f_{k+1})} \leq    \sqrt{E(s,y_k,f_k)} \, c_1(s)$$
and thus
\begin{equation}\label{sommeEp0}
\sum_{k=0}^n \sqrt{E(s,y_k,f_k)}\leq \sqrt{E(s,y_0,f_0)} \frac{1-c_1(s)^{n+1}}{1-c_1(s)}\leq \frac{\sqrt{E(s,y_0,f_0)}}{1-c_1(s)}.
\end{equation}
Proceeding as before, we get (\ref{equation44bis}).
\end{proof}

In view of estimates (\ref{equation44}) and (\ref{equation44bis}), we now intend to choose $s$ such that $\|y_{n+1}\|_{L^\infty(Q_T)}\le M$. To this end, we need an estimate of $E(s,y_0,f_0)=\frac{1}{2} \| \rho_0(s) (\partial_t y_0-\partial_{xx} y_0 + g(y_0)-f_0\,1_{\omega}  )\|^2_{L^2(Q_T)}$ in terms of $s$. Since $\rho(s)\notin L^2(Q_T)$, such estimate is not straightforward for any $(y_0,f_0)\in \mathcal{A}(s)$. We select the pair $(y_0,f_0)\in \mathcal{A}(s)$ solution of the linear problem, i.e. $g\equiv 0$ in \eqref{heat-NL}.

\begin{lemma}\label{y0f0g0}
Assume that $g$ satisfies \ref{growth_condition}. For any $s\geq s_0$, let $(y_0,f_0)\in \mathcal{A}(s)$ be the solution of the extremal problem \eqref{extrema-problem} in the linear case for which $g\equiv 0$. Then, 
\begin{equation}\label{equation52}
\sqrt{E(s,y_0,f_0)} 
\le   \biggl(\alpha+\beta\big(c^{3/2}+ \ln^{3/2}(1+C\Vert u_0\Vert_{H_0^1(\Omega)})\big)\biggr)e^{cs }\|u_0\|_{H_0^1(\Omega)}.
\end{equation}
with $c=\Vert \varphi(\cdot,0)\Vert_\infty$.
\end{lemma}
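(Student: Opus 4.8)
The starting point is the definition: $(y_0,f_0)\in\mathcal{A}(s)$ solves the extremal problem \eqref{extrema-problem} associated with the \emph{linear} equation ($A\equiv 0$, $B\equiv 0$, $z_0=u_0$). Hence $\partial_t y_0-\partial_{xx} y_0 - f_0\,1_\omega=0$ in $Q_T$, so that the residual appearing in $E(s,y_0,f_0)$ reduces to the nonlinear term alone:
\begin{equation}\nonumber
\sqrt{2\,E(s,y_0,f_0)}=\bigl\Vert \rho_0(s)\bigl(\partial_t y_0-\partial_{xx} y_0 + g(y_0)-f_0\,1_\omega\bigr)\bigr\Vert_{L^2(Q_T)}=\bigl\Vert \rho_0(s)\, g(y_0)\bigr\Vert_{L^2(Q_T)}.
\end{equation}
So everything comes down to estimating $\Vert \rho_0(s)\, g(y_0)\Vert_{L^2(Q_T)}$, and the factor $\sqrt2$ will be absorbed since the final bound has a single power with no explicit $\frac1{\sqrt2}$ — I would simply carry the constant through and note it is harmless, or keep the $\frac1{\sqrt2}$ and observe $\frac1{\sqrt2}\le 1$.

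\textbf{Key steps.} First I would invoke the pointwise bound on $g$ coming from \ref{growth_condition} together with $g(0)=0$, namely $|g(r)|\le |r|\,\psi(r)=|r|\bigl(\alpha+\beta\ln^{3/2}(1+|r|)\bigr)$, and the monotonicity of $\psi$, to write
\begin{equation}\nonumber
\Vert \rho_0(s)\, g(y_0)\Vert_{L^2(Q_T)}\le \psi\bigl(\Vert y_0\Vert_{L^\infty(Q_T)}\bigr)\,\Vert \rho_0(s)\, y_0\Vert_{L^2(Q_T)}\le \psi\bigl(\Vert y_0\Vert_{L^\infty(Q_T)}\bigr)\,\Vert \rho(s)\, y_0\Vert_{L^2(Q_T)},
\end{equation}
using $\rho_0(s)\le\rho(s)$ (Remark \ref{remark-rho0}). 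Second, I would apply Theorem \ref{controllability_result} in the linear case $A\equiv 0$, $B\equiv 0$: estimate \eqref{estimation1} gives $\Vert \rho(s)\, y_0\Vert_{L^2(Q_T)}\le C s^{-3/2} e^{cs}\Vert u_0\Vert_2\le e^{cs}\Vert u_0\Vert_{H^1_0(\Omega)}$ for $s\ge s_0$ large enough (since $Cs^{-3/2}\le 1$ and $\Vert u_0\Vert_2\le\Vert u_0\Vert_{H^1_0(\Omega)}$), with $c=\Vert\varphi(\cdot,0)\Vert_\infty$. Third, I need an $L^\infty$ bound on $y_0$: since $u_0\in H^1_0(\Omega)$ and $A\equiv 0$, $B\equiv 0$, estimate \eqref{estimation1bis} gives $\Vert y_0\Vert_{L^\infty(Q_T)}\le C e^{-3s/2} e^{cs}\Vert u_0\Vert_{H^1_0(\Omega)}$. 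However, the target bound \eqref{equation52} has $\ln^{3/2}(1+C\Vert u_0\Vert_{H^1_0(\Omega)})$ inside $\psi$ — independent of $s$ — so I cannot directly feed the $s$-dependent $L^\infty$ bound into $\psi$. Instead I would use the cruder, $s$-independent bound from Remark \ref{remark2} (or Lemma \ref{existence_triplet}'s construction): $\Vert y_0\Vert_{L^\infty(Q_T)}\le C\Vert u_0\Vert_{H^1_0(\Omega)}$, valid because $y_0$ is the linear controlled solution with initial datum $u_0\in H^1_0$ and the embedding $L^2(0,T;H^2)\cap H^1(0,T;L^2)\hookrightarrow L^\infty(Q_T)$ in 1D. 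Then $\psi\bigl(\Vert y_0\Vert_{L^\infty(Q_T)}\bigr)\le \alpha+\beta\ln^{3/2}(1+C\Vert u_0\Vert_{H^1_0(\Omega)})$.

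\textbf{Assembling.} Combining the three pieces,
\begin{equation}\nonumber
\sqrt{E(s,y_0,f_0)}\le \tfrac1{\sqrt2}\,\bigl(\alpha+\beta\ln^{3/2}(1+C\Vert u_0\Vert_{H^1_0(\Omega)})\bigr)\, e^{cs}\Vert u_0\Vert_{H^1_0(\Omega)}.
\end{equation}
This is \emph{almost} \eqref{equation52}, except the stated bound carries the extra term $c^{3/2}$ alongside $\ln^{3/2}(1+C\Vert u_0\Vert_{H^1_0(\Omega)})$. I expect this discrepancy arises because the authors prefer a slightly different, perhaps sharper, control of $\psi$ evaluated at an $L^\infty$-bound that does retain the $s$-dependence through \eqref{estimation1bis}: writing $\Vert y_0\Vert_{L^\infty(Q_T)}\le C e^{(c-3/2)s}\Vert u_0\Vert_{H^1_0(\Omega)}$ and using $\ln(1+ab)\le \ln(1+a)+\ln(1+b)$ together with $\ln^{3/2}(1+e^{(c-3/2)s})\le (c-3/2)^{3/2}s^{3/2}\le c^{3/2}s^{3/2}$, then noting that after multiplying by $e^{cs}$ the polynomial factor $s^{3/2}$ is still dominated — no, that would leave an $s^{3/2}$. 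More likely they simply bound $\ln^{3/2}(1+\Vert y_0\Vert_\infty)\le \bigl(\ln(1+C\Vert u_0\Vert_{H^1_0})+cs\bigr)^{3/2}$ is \emph{not} what they want either; rather, they absorb a constant: since $\psi$ is applied and then everything is multiplied by $e^{cs}$, and the term $c^{3/2}$ is a fixed constant, one can always enlarge the bracket by $\beta c^{3/2}$ to obtain precisely the displayed form (it is a weaker, safer inequality). So the cleanest route is: prove the bound with bracket $\alpha+\beta\ln^{3/2}(1+C\Vert u_0\Vert_{H^1_0(\Omega)})$, then note trivially that this is $\le \alpha+\beta\bigl(c^{3/2}+\ln^{3/2}(1+C\Vert u_0\Vert_{H^1_0(\Omega)})\bigr)$ since $c,\beta>0$. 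The \textbf{main obstacle} is getting the $L^\infty$-control of $y_0$ to feed into $\psi$ in an $s$-free way: one must be careful to use the parabolic-regularity bound of Remark \ref{remark2} (which is $s$-independent) rather than \eqref{estimation1bis} (which blows up as $e^{(c-3/2)s}$), and to double-check that the linear minimizer $y_0$ indeed satisfies the hypotheses of that embedding, i.e. that $\rho_0(s)(\partial_t y_0-\partial_{xx}y_0)\in L^2(Q_T)$, which holds by construction since $(y_0,f_0)\in\mathcal{A}(s)$.
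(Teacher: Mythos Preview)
Your plan is close, but there is a genuine gap. The bound $\Vert y_0\Vert_{L^\infty(Q_T)}\le C\Vert u_0\Vert_{H^1_0(\Omega)}$ with $C$ \emph{independent of $s$} is not available for this particular pair. Remark~\ref{remark2} gives $\Vert y_0\Vert_{L^\infty(Q_T)}\le C\bigl(\Vert u_0\Vert_{H^1_0}+\Vert \partial_t y_0-\partial_{xx}y_0\Vert_{L^2(Q_T)}\bigr)$, and for the optimal pair one has $\partial_t y_0-\partial_{xx}y_0=f_0\,1_\omega$. But $(y_0,f_0)$ solves the $s$-weighted extremal problem, so $f_0$ depends on $s$; the only control you have is $\Vert \rho_0(s)f_0\Vert_{L^2(q_T)}\le Cs^{-3/2}e^{cs}\Vert u_0\Vert_2$, which together with $\rho_0(s)\ge e^{3s/2}$ yields $\Vert f_0\Vert_{L^2(q_T)}\le Cs^{-3/2}e^{(c-3/2)s}\Vert u_0\Vert_2$. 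Since $c>3/2$, this \emph{grows} with $s$. Lemma~\ref{existence_triplet} is irrelevant: it builds a different element $(\phi y^\star,0)$, not the minimizer $(y_0,f_0)$. Consequently your bracket $\alpha+\beta\ln^{3/2}(1+C\Vert u_0\Vert_{H^1_0})$ is not established, and the extra $\beta c^{3/2}$ is not a cosmetic weakening---it is the residue of an honest cancellation.

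The paper does exactly what you sketched and then rejected: it keeps the $s$-dependent bound $\Vert y_0\Vert_{L^\infty(Q_T)}\le Ce^{cs}\Vert u_0\Vert_{H^1_0}$ from \eqref{estimation1bis}, feeds it into $\psi$, and expands $\ln^{3/2}\bigl(1+Ce^{cs}\Vert u_0\Vert_{H^1_0}\bigr)\le \ln^{3/2}\bigl(e^{cs}(1+C\Vert u_0\Vert_{H^1_0})\bigr)\le (cs)^{3/2}+\ln^{3/2}(1+C\Vert u_0\Vert_{H^1_0})$. The point you missed is that you must \emph{not} throw away the prefactor $Cs^{-3/2}$ in the estimate $\Vert\rho(s)y_0\Vert_{L^2(Q_T)}\le Cs^{-3/2}e^{cs}\Vert u_0\Vert_2$ from \eqref{estimation1}: this $s^{-3/2}$ is precisely what kills the $s^{3/2}$ produced by $(cs)^{3/2}$, leaving the constant $c^{3/2}$ in the final bracket. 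Your second step absorbed $Cs^{-3/2}\le 1$ too early, which then made the correct route look like it ``would leave an $s^{3/2}$''.
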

\begin{proof}
Estimate \eqref{estimation1} of Proposition \ref{controllability_result} with $A=0,B=0$ and $z_0=u_0$ leads to 
\begin{equation}\label{estimationy0}
\Vert \rho(s) y_0\Vert_{L^2(Q_T)}+ \Vert \rho_0(s)f_0\Vert_{L^2(q_T)} \leq  Cs^{-3/2} e^{ cs }\|u_0\|_2
\end{equation}
while (\ref{estimation1bis}) leads to, since $s>1$ and $\rho_0\ge 1$,
\begin{equation}
\label{estimy0Linfinty}
\|y_0\|_{L^\infty(Q_T)}  \leq C e^{-\frac32s}e^{cs}\|u_0\|_{H_0^1(\Omega)}  \leq C e^{cs}\|u_0\|_{H_0^1(\Omega)}.
\end{equation}
It follows that, using  (\ref{estim-rho0g}), since $\rho_0\le \rho$ and $s\ge 1$ 
\begin{equation}
\nonumber
\begin{aligned}
\sqrt{E(s,y_0,f_0)}    
&=\frac1{\sqrt{2}}\|\rho_0(s) g(y_0)\|_{L^2(Q_T)}\le \frac1{\sqrt{2}}\psi (\|y_0\|_{L^\infty(Q_T)})\|\rho(s) y_0\|_{L^2(Q_T)}\\
&\le C\psi(C e^{cs}\|u_0\|_{H^1_0(\Omega)})s^{-3/2}e^{cs }\|u_0\|_{H_0^1(\Omega)}\\
&\le C\big(\alpha+\beta\ln^{3/2}(1+Ce^{sc}\Vert u_0\Vert_{H_0^1(\Omega)})\big)s^{-3/2}e^{cs }\|u_0\|_{H_0^1(\Omega)}\\
&\le C\big(\alpha+\beta\ln^{3/2}(e^{sc}(1+C\Vert u_0\Vert_{H_0^1(\Omega)}))\big)s^{-3/2}e^{cs }\|u_0\|_{H_0^1(\Omega)}\\
&\le C\biggl(\alpha+\beta\big((sc)^{3/2}+ \ln^{3/2}(1+C\Vert u_0\Vert_{H_0^1(\Omega)})\big)\biggr)s^{-3/2}e^{cs }\|u_0\|_{H_0^1(\Omega)}\\
&\le C\biggl(\alpha+\beta\big(c^{3/2}+ \ln^{3/2}(1+C\Vert u_0\Vert_{H_0^1(\Omega)})\big)\biggr)e^{cs }\|u_0\|_{H_0^1(\Omega)}.
\end{aligned}
\end{equation}
\end{proof}
We are now in position to prove to following result. 

\begin{prop}\label{Estimation-2-yk+1}
Assume that $g$ satisfies \ref{growth_condition} and \ref{constraint_g_holder} for some $p\in [0,1]$. Assume moreover that $2cC(p)\beta^{2/3}<1$
and let $(y_0,f_0)$ be the controlled pair given by Lemma \ref{y0f0g0}. 
There exists $M_0>0$ such that, if we have constructed from (\ref{algo_LS_Y}) the pairs $(y_k,f_k)_{0\le k\le n}\in \mathcal{A}(s)$ with $s=\max(C(p)\psi(M_0)^{2/3},s_0)$ satisfying $\|y_k\|_{L^\infty(Q_T)}\le M_0$ for all $0\le k\le n$, then the pair $(y_{n+1},f_{n+1})$ constructed from (\ref{algo_LS_Y}) also belongs to $\mathcal{A}(s)$ and satisfies 
$$
\|y_{n+1}\|_{L^\infty(Q_T)}
 \le M_0.
 $$
\end{prop}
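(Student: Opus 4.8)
The goal is to pin down $M_0$ so that the induction hypothesis $\|y_k\|_{L^\infty(Q_T)}\le M_0$ propagates from step $n$ to step $n+1$. The strategy is to feed the bound on $\sqrt{E(s,y_0,f_0)}$ from Lemma~\ref{y0f0g0} into the bounds \eqref{equation44}–\eqref{equation44bis} of Proposition~\ref{Estimation yk+1}, and then choose $M_0$ large and $s=\max(C(p)\psi(M_0)^{2/3},s_0)$ accordingly. The key point is that on the right-hand side of \eqref{equation44}, the dangerous term $c_1^{1/p}(s)E(s,y_0,f_0)$ contains the factor $s^{-3/2}e^{-\frac{3}{2}s}$ from $c_1(s)$ (see \eqref{c1}) multiplied by $E(s,y_0,f_0)\lesssim \psi(\cdot)^2 e^{2cs}$, so it behaves like $e^{(2c-\frac{3}{2})s}$ times polynomial-in-$s$ and polylog-in-$M_0$ factors; similarly the first term behaves like $e^{cs}$ times polylog factors. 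Since $c=\Vert\varphi(\cdot,0)\Vert_\infty>3/2$, these are growing in $s$, so one cannot simply send $s\to\infty$; instead one must exploit that $s$ is \emph{tied} to $M_0$ through $s=C(p)\psi(M_0)^{2/3}=C(p)(\alpha+\beta\ln^{3/2}(1+M_0))^{2/3}$, which grows only like $(\ln M_0)$ to a power less than one.

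\textbf{Key steps.} First, substitute $s=C(p)\psi(M_0)^{2/3}$ into \eqref{equation52}: then $e^{cs}=e^{cC(p)\psi(M_0)^{2/3}}=e^{cC(p)(\alpha+\beta\ln^{3/2}(1+M_0))^{2/3}}$. The crucial algebraic observation is that $(\alpha+\beta\ln^{3/2}(1+M_0))^{2/3}\le \alpha^{2/3}+\beta^{2/3}\ln(1+M_0)$ (subadditivity of $r\mapsto r^{2/3}$), so $e^{cs}\le e^{cC(p)\alpha^{2/3}}(1+M_0)^{cC(p)\beta^{2/3}}$. Here the hypothesis $2cC(p)\beta^{2/3}<1$ enters decisively: it guarantees that $e^{cs}$ grows strictly slower than $\sqrt{1+M_0}$ (in fact $e^{2cs}$ grows slower than $1+M_0$), so that quantities like $E(s,y_0,f_0)\sim e^{2cs}$ times polylog terms are $o(M_0)$ as $M_0\to\infty$. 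Second, bound each term appearing in \eqref{equation44} (resp.\ \eqref{equation44bis}): the term $\tfrac{p+1}{p}\sqrt{E(s,y_0,f_0)}$ is $\lesssim \ln^{3/2}(1+M_0)\, e^{cs}\|u_0\|_{H^1_0}$, hence $o(M_0)$; and the term $\tfrac{(1+p)^{1/p+1}}{p}c_1^{1/p}(s)E(s,y_0,f_0)$, using $c_1(s)\le Cs^{-3/2}e^{-\frac{3p}{2}s}[g']_p$, contributes at most a constant times $s^{-3/2/p}e^{-\frac{3}{2}s}[g']_p^{1/p}$ times $E(s,y_0,f_0)$, which since $e^{-3s/2}\cdot e^{2cs}=e^{(2c-3/2)s}$ is still beaten by the earlier observation once one checks $2c-3/2$ against $cC(p)\beta^{2/3}$ scaling — but more simply one notes $c_1(s)\le 1$ for $s$ large and bounds $c_1^{1/p}(s)\le c_1(s)^{?}$ appropriately, reducing again to a multiple of $E(s,y_0,f_0)^{1+?}$ which is controlled. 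For $p=0$, one instead uses that $c_1(s)=2Cs^{-3/2}\psi(M_0)\to 0$ as $M_0\to\infty$ (since $s\sim\psi(M_0)^{2/3}$ grows, so $s^{-3/2}\psi(M_0)\sim 1$ — one must check this is $<1$, which is exactly the role of $C(0)=(2C)^{3/2}$ so that $s\ge(2C)^{3/2}\psi(M_0)^{2/3}$ forces $2Cs^{-3/2}\psi(M_0)\le 1$, and strict inequality for $s>s_0$); then \eqref{equation44bis} gives $\|y_{n+1}\|_{L^\infty(Q_T)}\le \|y_0\|_{L^\infty(Q_T)}+Cm\,\tfrac{\sqrt{E(s,y_0,f_0)}}{1-c_1(s)}$ with $\sqrt{E}=o(M_0)$. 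Third, since $\|y_0\|_{L^\infty(Q_T)}\le Ce^{cs}\|u_0\|_{H^1_0(\Omega)}=o(M_0)$ as well, we conclude that the whole right-hand side is $o(M_0)$, hence $<M_0$ for $M_0$ chosen large enough; fixing such an $M_0$ completes the induction step.

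\textbf{Main obstacle.} The delicate part is the bookkeeping in the estimate for the term $c_1^{1/p}(s)E(s,y_0,f_0)$ when $p\in(0,1]$ is small: the exponent $1/p$ blows up, so one must verify that the negative exponential $e^{-\frac{3p}{2}s\cdot\frac1p}=e^{-\frac32 s}$ exactly cancels one power of the growth in $E(s,y_0,f_0)\sim e^{2cs}\ln^3(1+M_0)$, leaving $e^{(2c-\frac32)s}$ times polylog, and then that this product is still $o(M_0)$ under $2cC(p)\beta^{2/3}<1$. Tracking that $2c - 3/2 < 2c$ is not by itself enough; one needs the relation $s=C(p)\psi(M_0)^{2/3}$ so that $e^{(2c-3/2)s}\lesssim (1+M_0)^{(c-3/4)C(p)\beta^{2/3}\cdot\frac{4}{3}\cdot\frac34}$— more carefully, $e^{(2c-3/2)s}=e^{(2c-3/2)C(p)\psi(M_0)^{2/3}}\le \mathrm{const}\cdot(1+M_0)^{(2c-3/2)C(p)\beta^{2/3}/2}$ and one checks the exponent is $<1$, which follows from $2cC(p)\beta^{2/3}<1$ since $2c-3/2<2c$. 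Getting every constant and exponent to line up — in particular confirming the interplay between $c_1^{1/p}(s)$, the factor $(1+p)^{1/p+1}$, and $E(s,y_0,f_0)$ to the appropriate power — is the real work of the proof; the rest is routine use of Lemma~\ref{y0f0g0}, Proposition~\ref{Estimation yk+1}, and Remark~\ref{remark2}.
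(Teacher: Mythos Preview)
Your proposal is correct and follows essentially the same route as the paper: use the subadditivity $(a+b)^{2/3}\le a^{2/3}+b^{2/3}$ to get $e^{cs}\le e^{cC(p)\alpha^{2/3}}(1+M)^{cC(p)\beta^{2/3}}$, feed this into both $\|y_0\|_{L^\infty(Q_T)}$ (via \eqref{estimy0Linfinty}) and $\sqrt{E(s,y_0,f_0)}$ (via \eqref{equation52}), plug into \eqref{equation44}--\eqref{equation44bis}, and observe that the resulting right-hand side grows like a power strictly less than one of $(1+M)$, so that $M_0$ can be chosen as the smallest $M$ for which the right side is $\le M$.

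The only notable difference is that you work harder than necessary on the term $c_1^{1/p}(s)\,E(s,y_0,f_0)$. The paper dispenses with the exponential decay in $c_1(s)$ entirely: since $s\ge 1$, one has simply $c_1(s)\le \frac{C^{1+p}}{1+p}[g']_p$, a constant independent of $s$, so $c_1^{1/p}(s)\,E(s,y_0,f_0)\le \text{const}\cdot E(s,y_0,f_0)\le \text{const}\cdot(1+M)^{2cC(p)\beta^{2/3}}$. This gives the clean uniform bound $\|y_{n+1}\|_{L^\infty(Q_T)}\le C(p,\alpha,u_0,[g']_p,\beta)(1+M)^{2cC(p)\beta^{2/3}}$ directly, and the factor $2$ in the hypothesis $2cC(p)\beta^{2/3}<1$ is precisely what makes this exponent $<1$. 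Your tracking of $e^{(2c-3/2)s}$ is valid but superfluous. Also, in the $p=0$ case, note that $\beta=0$ so $\psi(M_0)=\|g'\|_\infty$ is constant in $M_0$ (hence so is $s$); the bound then collapses to a constant independent of $M$, and you do not need the limiting argument you sketch.
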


\begin{proof} Assume that for some $M$ larhe enough, $\Vert y_{k}\Vert_{L^\infty(Q_T)}\leq M$. The inequality $(a+b)^{2/3}\leq a^{2/3}+b^{2/3}$  for all $a,b\geq 0$ allows to write 
$$
\psi(M)^{2/3}=(\alpha+\beta \ln^{3/2}(1+M))^{2/3}\leq \alpha^{2/3}+\beta^{2/3}\ln(1+M).
$$
Assume that for some $M$ large enough, $\Vert y_{k}\Vert_{L^\infty(Q_T)}\leq M$. Estimate \eqref{estimy0Linfinty} with $s=s=\max(C(p)\psi(M_0)^{2/3},s_0)$ then leads to 
\begin{equation}
\label{u0linfty}
\begin{aligned}
\Vert y_0\Vert_{L^\infty(Q_T)} & \leq C \Vert u_0\Vert_{H_0^1(\Omega)} e^{cs}\leq C \Vert u_0\Vert_{H_0^1(\Omega)} e^{c (s_0+C(p)\psi(M)^{2/3})}\leq 
Ce^{c s_0}\Vert u_0\Vert_{H_0^1(\Omega)}e^{c C(p)\psi(M)^{2/3}}\\
& \leq Ce^{c s_0}\Vert u_0\Vert_{H_0^1(\Omega)}e^{c C(p)\big(\alpha^{2/3}+\beta^{2/3}\ln(1+M)\big)}\\
& \leq Ce^{c s_0}\Vert u_0\Vert_{H_0^1(\Omega)}e^{C(p)\alpha^{2/3}} e^{c C(p)\beta^{2/3}\ln(1+M)}\\
& \leq Ce^{c s_0}\Vert u_0\Vert_{H_0^1(\Omega)}e^{C(p)\alpha^{2/3}} (1+M)^{cC(p)\beta^{2/3}}\\
& \leq c(\alpha,u_0)(1+M)^{cC(p)\beta^{2/3}}.
\end{aligned}
\end{equation}
Similarly, this estimate of $e^{cs}$ and \eqref{equation52} leads to 
\begin{equation}
\label{estimEs0}
\begin{aligned}
\sqrt{E(s,y_0,f_0)} 
&\le   \biggl(\alpha+\beta\big(c^{3/2}+ \ln^{3/2}(1+C\Vert u_0\Vert_{H_0^1(\Omega)})\big)\biggr)e^{c s_0}e^{C(p)\alpha^{2/3}} (1+M)^{cC(p)\beta^{2/3}}\|u_0\|_{H_0^1(\Omega)}\\
& \leq c(\alpha,\beta,u_0)(1+M)^{cC(p)\beta^{2/3}}.
\end{aligned}
\end{equation}
\par\noindent
{\bf First case :} $p\in(0,1]$. Since $s\geq 1$, the constant $c_1(s)$ defined in \eqref{c1} satisfies $c_1(s)\leq \frac{C^{1+p}}{1+p}[g']_p$.Therefore, 
by combining  \eqref{equation44}, \eqref{u0linfty} and \eqref{estimEs0}, we get 
\begin{equation}
\nonumber
\begin{aligned}
\|y_{n+1}\|_{L^\infty(Q_T)} \le & c(\alpha,u_0)(1+M)^{cC(p)\beta^{2/3}}\\
&+Cm \max\biggl( \frac{1}{p}c(\alpha,\beta,u_0)(1+M)^{cC(p)\beta^{2/3}},\\
& \hspace{4.5cm}\frac{(1+p)}{p} [g^\prime]_p^{1/p} C^{(1+p)/p} c^2(\alpha,\beta,u_0)(1+M)^{2cC(p)\beta^{2/3}}\biggr)\\
\leq  & C(p,\alpha,u_0,[g']_p,\beta) (1+M)^{2cC(p)\beta^{2/3}}.
 \end{aligned}
 \end{equation}
 Now, if $\beta$ is small enough so that $2cC(p)\beta^{2/3}<1$, the real $M_0$ defined as follows 
 \begin{equation}
 M_0:=\inf\big\{M>0 \mid C(p,\alpha,u_0,[g']_p,\beta) (1+M)^{2cC(p)\beta^{2/3}}\leq M\big\} \label{def_M0}
 \end{equation}
 exists and is independent of $n$. Moreover, for all $M=M_0$ and $s=\max(C(p)\psi(M_0)^{2/3},s_0)$ :
$$ 
\|y_{n+1}\|_{L^\infty(Q_T)}
 \le M.
 $$
 \par\noindent
 {\bf Second case :} $p=0$. In this case, $c_1(s)=  Cs^{-3/2} [g']_0=2Cs^{-3/2}\Vert g^\prime\Vert_\infty<1$ for $s$ large enough. By combining \eqref{equation44bis}, \eqref{u0linfty} and \eqref{estimEs0}, we get
 $$
 \begin{aligned}
\|y_{n+1}\|_{L^\infty(Q_T)}
 &\le c(\alpha,u_0)(1+M)^{cC(0)\beta^{2/3}}+ \frac{1}{1-c_1(s)}c(\alpha,\beta,u_0)(1+M)^{cC(0)\beta^{2/3}}. \\
  &\le  C(\alpha,u_0,[g']_0,\beta) (1+M)^{cC(0)\beta^{2/3}}
 \end{aligned}
$$
and we conclude as in the previous case. 
\end{proof}

We are now in position to prove by induction the following decay result for the sequence $(E(s,y_k,f_k))_{(k\in \mathbb{N})}$.

\begin{prop}\label{convergenceEk}
Assume that $g$ satisfies \ref{growth_condition} and \ref{constraint_g_holder} for some $p\in [0,1]$. Assume moreover that $2cC(p)\beta^{2/3}<1$.
Let $M_0$ be given by (\ref{def_M0}) and $s=\max(C(p)\psi(M_0)^{2/3},s_0)$. Let $(y_0,f_0)\in \mathcal{A}(s)$ be the solution of the extremal problem \eqref{extrema-problem} in the linear situation for which $g\equiv 0$. Then the sequence $(y_k,f_k)_{k\in \mathbb{N}}\in \mathcal{A}(s)$ defined by (\ref{algo_LS_Y}) satisfies
$$
\|y_k\|_{L^\infty(Q_T)}\le M_0, \quad \forall k\in \N.
$$
Moreover, the sequence $(E(s,y_k,f_k))_{k\in \N}\to 0$ tends to $0$ as $k\to \infty$. The convergence is at least linear, and is at least of order $1+p$ after a finite number of iterations. 
\end{prop}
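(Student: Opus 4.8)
The plan is to establish first, by induction on $k$, the uniform bound $\|y_k\|_{L^\infty(Q_T)}\le M_0$ --- this is exactly what permits the parameter $s$ in \eqref{algo_LS_Y} to be frozen once and for all, independently of $k$ --- and then to read off the convergence rates from the scalar recursion satisfied by $\sqrt{E(s,y_k,f_k)}$.

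\textbf{The uniform bound.} I would first record that the pair $(y_0,f_0)$ exists (Lemma \ref{existence_triplet}) and is the one produced by Lemma \ref{y0f0g0}. For the base case $k=0$, estimate \eqref{estimy0Linfinty} gives $\|y_0\|_{L^\infty(Q_T)}\le Ce^{cs}\|u_0\|_{H^1_0(\Omega)}$, and with $s=\max(C(p)\psi(M_0)^{2/3},s_0)$ the chain \eqref{u0linfty} bounds the right-hand side by $c(\alpha,u_0)(1+M_0)^{cC(p)\beta^{2/3}}$, which is $\le M_0$ by the very definition \eqref{def_M0} of $M_0$ (an infimum that is attained and finite thanks to the smallness assumption $2cC(p)\beta^{2/3}<1$). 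The induction step is then handed over to Proposition \ref{Estimation-2-yk+1}: assuming $\|y_k\|_{L^\infty(Q_T)}\le M_0$ for $0\le k\le n$, that proposition guarantees $(y_{n+1},f_{n+1})$ is well-defined, belongs to $\mathcal{A}(s)$, and satisfies $\|y_{n+1}\|_{L^\infty(Q_T)}\le M_0$. This closes the induction, so $\|y_k\|_{L^\infty(Q_T)}\le M_0$ for all $k\in\N$; hence $\|g'(y_k)\|_{L^\infty(Q_T)}\le\psi(M_0)$ and $s\ge\max(\|g'(y_k)\|_{L^\infty(Q_T)}^{2/3},s_0)$ for every $k$, so Proposition \ref{proposition3} applies at each step and the constant $c_1(s)$ of \eqref{c1} is fixed.

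\textbf{Decay and the linear rate.} Setting $E_k:=E(s,y_k,f_k)$, I would invoke \eqref{estimW2} to get $\sqrt{E_{k+1}}\le\sqrt{E_k}\,p_k(s,\widetilde{\lambda_k})$, where $p_k(s,\widetilde{\lambda_k}):=\min_{\lambda\in[0,m]}p_k(s,\lambda)$ and $p_k$ is the function \eqref{pk}. Since $p_k(s,0)=1$ and $p_k$ is strictly decreasing at $\lambda=0$, we have $p_k(s,\widetilde{\lambda_k})<1$, so $(E_k)_k$ is non-increasing; the $k$-independent geometric bound \eqref{sommeE} (resp.\ \eqref{sommeEp0} when $p=0$) then gives $\sum_{k\ge0}\sqrt{E_k}\le\sqrt{E_0}/(1-p_0(s,\widetilde{\lambda_0}))<\infty$, whence $E_k\to0$. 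Finally \eqref{decreaseEk} yields $\sqrt{E_{k+1}}\le p_0(s,\widetilde{\lambda_0})\,\sqrt{E_k}$ for every $k$, i.e.\ at least linear convergence with the fixed ratio $p_0(s,\widetilde{\lambda_0})\in[0,1)$.

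\textbf{Order $1+p$ after finitely many steps.} For $p\in(0,1]$: since $E_k\to0$ and $m\ge1$, there is $k_\star$ with $c_1(s)\sqrt{E_k}^{p}\le\frac1{1+p}$ for all $k\ge k_\star$; taking the admissible value $\lambda=1$ in \eqref{estimW2} gives, for $k\ge k_\star$,
\[
\sqrt{E_{k+1}}\ \le\ \sqrt{E_k}\,p_k(s,1)\ =\ c_1(s)\,\bigl(\sqrt{E_k}\bigr)^{1+p},
\]
so the convergence of $\sqrt{E_k}$ (hence of $E_k$) is of order $1+p$ once $k\ge k_\star$; for $p=0$ this display is just the linear bound of the previous step. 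I expect the main obstacle to be nothing new here but rather the uniform $L^\infty$ bound underlying the $k$-independent choice of $s$: it is what requires the delicate estimates of Propositions \ref{Estimation yk+1} and \ref{Estimation-2-yk+1} and the smallness of $\beta$. Once that bound is secured, the remainder is elementary one-variable analysis of the maps $\lambda\mapsto p_k(s,\lambda)$.
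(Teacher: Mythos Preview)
Your proof is correct and follows the same overall architecture as the paper's: the uniform bound $\|y_k\|_{L^\infty}\le M_0$ by induction via Proposition~\ref{Estimation-2-yk+1}, then the scalar analysis of the recursion \eqref{estimW2}. The one genuine difference is in how you obtain the eventual order $1+p$: the paper does \emph{not} first prove $E_k\to0$ and then locate $k_\star$; instead it computes the explicit minimizer $\widetilde{\lambda_k}=\big((1+p)c_1(s)E_k^{p/2}\big)^{-1/p}$ on the ``bad'' set $I=\{k:\,c_1^{1/p}(s)\sqrt{E_k}\ge(1+p)^{-1/p}\}$ and shows the fixed decrement \eqref{decayEunquart}, i.e.\ $c_1^{1/p}(s)\sqrt{E_{k+1}}\le c_1^{1/p}(s)\sqrt{E_k}-\frac{p}{(1+p)^{1/p+1}}$, which forces $I$ to be finite. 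Your route is more economical (once linear convergence is known, the existence of $k_\star$ is automatic and no optimization of $p_k$ is needed), while the paper's route has the advantage of yielding the quantitative bound \eqref{defk0} on the number of ``pre-superlinear'' iterations.
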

\begin{proof} The uniform boundedness of the sequence $(y_k)_{k\in \mathbb{N}}$ follows by induction from Proposition \ref{Estimation-2-yk+1} and implies the decay to $0$ of $E(s,y_k,f_k)$. Remark that, from the construction of $M_0$, $\Vert y_0\Vert_{L^\infty(Q_T)}< M_0$.

\vskip 0.25cm
\par\noindent
{\bf First case :} $p\in(0,1]$.
From the definition of $p_k$ given in (\ref{pk}) 
we have $p_k(\widetilde{\lambda}_k):=\min_{\lambda\in[0,m]}p_k(\lambda)\le p_k(1)=c_1 (s) E(s,y_k,f_k)^{p/2} $
and thus
\begin{equation}
\label{C1Ebis}
c_2(s)\sqrt{E(s,y_{k+1},f_{k+1})}\le  \Big(c_2(s) \sqrt{E(s,y_k,f_k)}\Big)^{1+p}, \quad c_2(s):=c_1^{1/p}(s).
\end{equation}
Thus, if $c_2(s)\sqrt{E(s,y_0,f_0)}< \frac1{(p+1)^{1/p}}$, then $c_2(s)\sqrt{E(s,y_k,f_k)}\to 0$  as $k\to \infty$ with a rate $1+p$. On the other hand, if $c_2(s)\sqrt{E(s,y_0,f_0)}\ge \frac1{(p+1)^{1/p}}$, then  $I:=\{k\in \mathbb{N},\ c_2(s)\sqrt{E(s,y_k,f_k)}\ge \frac1{(p+1)^{1/p}}\}$ is a finite subset of $\mathbb{N}$. Indeed,  for all $k\in I$ and for all $\lambda\in[0,1]$ : $p_k'(s,\lambda)=-1+(p+1)\lambda^p c_1(s) E(s,y_k,f_k)^{p/2}$ and thus $p_k'(s,\lambda)=0$ if and only if $\lambda =\frac{1}{(p+1)^{1/p}c_2(s)\sqrt{E(s,y_k,f_k)}}$, which gives
$$
\begin{aligned}
p_k(s,\widetilde{\lambda_k})=\min_{\lambda\in[0,m]}p_k(\lambda)
&=\min_{\lambda\in[0,1]}p_k(\lambda)=p_k\Big(\frac{1}{(1+p)^{1/p}c_2(s)\sqrt{E(s,y_k,f_k)}}\Big)\\
&=1- \frac{p}{(1+p)^{\frac1p+1}}\frac{1}{c_2(s)\sqrt{E(s,y_k,f_k)}}
\end{aligned}$$
and thus
\begin{equation} \label{decayEunquart}
\begin{aligned}
c_2(s)\sqrt{E(s,y_{k+1},f_{k+1}) } 
&\le \Big(1-\frac{p}{(1+p)^{\frac1p+1}}\frac{1}{c_2(s)\sqrt{E(s,y_k,f_k)}}\Big)c_2(s)\sqrt{E(s,y_k,f_k)}\\
&=c_2(s)\sqrt{E(s,y_k,f_k) }-\frac{p}{(1+p)^{\frac1p+1}}.
\end{aligned}
\end{equation}
This inequality implies that the sequence $\big(c_2(s)\sqrt{E(s,y_k,f_k)}\big)_{k\in I}$ strictly decreases so that there exists $k_0\in I$ such that  
$c_2(s)\sqrt{E(s,y_{k_0+1},f_{k_0+1}}) <\frac1{(p+1)^{1/p}}$.
Thus the sequence  $\big(c_2(s) \sqrt{E(s,y_k,f_k)}\big)_{k\in \mathbb{N}}$ decreases to $0$ at least linearly and there exists 
$k_0\in\mathbb{N}$ such that for all $k> k_0$, $c_2(s)\sqrt{E(s,y_k,f_k)}<\frac1{(p+1)^{1/p}}$, that is  $I$ is a finite subset of $\mathbb{N}$. Arguing as in the first case, it follows that $c_2(s)\sqrt{E(s,y_k,f_k)}\to 0$ as $k\to \infty$. 

\vskip 0.25cm
\par\noindent
{\bf Second case :} $p=0$. Then for all $k\in\N$,  since $c_1(s)<1$, $p_k(s,\widetilde{\lambda_k})=c_1(s)$ (since $\widetilde{\lambda_k}=1$) and therefore
\begin{equation}
\sqrt{E(s,y_{k+1},f_{k+1})}\le   c_1(s) \sqrt{E(s,y_k,f_k)}\le c_1(s)^{k+1} \sqrt{E(s,y_0,f_0)} \label{estimp0}
\end{equation}
Thus $\sqrt{E(s,y_k,f_k)}\to 0$  as $k\to \infty$.
\end{proof}

We now prove the main result of this section.

 \begin{theorem}\label{ths1}
 Assume that $g$ satisfies \ref{growth_condition} and \ref{constraint_g_holder} for some $p\in [0,1]$. Assume moreover that $\beta$ is small enough so that 
 $$
 2cC(p)\beta^{2/3}<1
 $$
 with $c=\Vert \varphi(\cdot,0)\Vert_{L^\infty(\Omega)}$. Let $M_0$ be given by \eqref{def_M0} and $s=\max(C(p)\psi(M_0)^{2/3},s_0)$. Let $(y_0,f_0)\in \mathcal{A}(s)$ be the solution of the extremal problem \eqref{extrema-problem} in the linear situation for which $g\equiv 0$ and let $(y_k,f_k)_{k\in\mathbb{N}}$ be the sequence defined by (\ref{algo_LS_Y}). Then, $(y_k,f_k)_{k\in \mathbb{N}}\to (y,f)$ in $\mathcal{A}(s)$ where $f$ is a null control for $y$ solution of  \eqref{heat-NL}. 
The convergence is at least linear, and is at least of order $1+p$ after a finite number of iterations. 
\end{theorem}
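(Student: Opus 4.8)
The plan is to prove that the sequence $(y_k,f_k)_{k\in\mathbb{N}}$ produced by \eqref{algo_LS_Y} is a Cauchy sequence in $\mathcal{A}(s)$, that its limit $(y,f)$ satisfies $E(s,y,f)=0$, and then to transfer to $(y_k,f_k)$ the decay rate already established for $E(s,y_k,f_k)$. First recall what Proposition \ref{convergenceEk} provides: the parameter $s=\max(C(p)\psi(M_0)^{2/3},s_0)$ is admissible at every step, $\|y_k\|_{L^\infty(Q_T)}\le M_0$ for all $k$, $E(s,y_k,f_k)\to 0$, and — because the partial-sum bounds \eqref{52}, \eqref{sommeE} (for $p\in(0,1]$) and \eqref{sommeEp0} (for $p=0$) are uniform in $n$ — the series $\sum_{k\ge 0}\sqrt{E(s,y_k,f_k)}$ converges. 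Combining this with $(y_{k+1},f_{k+1})-(y_k,f_k)=-\lambda_k(Y_k^1,F_k^1)$, $\lambda_k\in[0,m]$, and the estimate \eqref{estimateF1Y1} of Proposition \ref{proposition3}, one gets
\[
\|(y_{k+1},f_{k+1})-(y_k,f_k)\|_{\mathcal{A}_0(s)}\le m\,\|(Y_k^1,F_k^1)\|_{\mathcal{A}_0(s)}\le Cm\sqrt{E(s,y_k,f_k)} ,
\]
so the increments are absolutely summable in $\mathcal{A}_0(s)$. As $\mathcal{A}(s)$ is complete for the distance induced by $\|\cdot\|_{\mathcal{A}_0(s)}$ (it is a closed affine subspace of the ambient Hilbert space), the sequence converges: $(y_k,f_k)\to(y,f)$ in $\mathcal{A}(s)$ for some $(y,f)\in\mathcal{A}(s)$.

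Next I would identify the limit by showing $E(s,y,f)=0$. Applying Remark \ref{remark2} to $(y-y_k,f-f_k)\in\mathcal{A}_0(s)$ gives $\|y-y_k\|_{L^\infty(Q_T)}\le Ce^{-\frac32 s}\|(y-y_k,f-f_k)\|_{\mathcal{A}_0(s)}\to 0$, so that $\|y\|_{L^\infty(Q_T)}\le M_0$ and $g$ is Lipschitz with constant at most $\psi(M_0)$ on the range of $y$ and of every $y_k$. Since $\partial_t(y-y_k)-\partial_{xx}(y-y_k)$, $f-f_k$ and $g(y)-g(y_k)$ are each bounded, in the relevant weighted $L^2$ norm, by a constant times $\|(y-y_k,f-f_k)\|_{\mathcal{A}_0(s)}$ — for the last one via $|g(y)-g(y_k)|\le\psi(M_0)|y-y_k|$ together with $\rho_0(s)\le\rho(s)$ — the triangle inequality yields
\[
\sqrt{E(s,y,f)}\le\sqrt{E(s,y_k,f_k)}+C\bigl(1+\psi(M_0)\bigr)\,\|(y-y_k,f-f_k)\|_{\mathcal{A}_0(s)} ,
\]
and both terms on the right-hand side tend to $0$. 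Hence $\partial_t y-\partial_{xx}y+g(y)=f\,1_\omega$ a.e. in $Q_T$; together with $y=0$ on $\Sigma_T$, $y(\cdot,0)=u_0$ and $y(\cdot,T)=0$ (the latter holding since $\rho(s)y\in L^2(Q_T)$), this says exactly that $f$ is a null control for the solution $y$ of \eqref{heat-NL}.

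For the convergence rate, I would use $(y_k,f_k)-(y,f)=\sum_{j\ge k}\lambda_j(Y_j^1,F_j^1)$, whence $\|(y_k,f_k)-(y,f)\|_{\mathcal{A}_0(s)}\le Cm\sum_{j\ge k}\sqrt{E(s,y_j,f_j)}$, and, conversely, since $E(s,y,f)=0$, the estimate of the previous step with the roles of iterate and limit exchanged gives $\sqrt{E(s,y_k,f_k)}\le C(1+\psi(M_0))\,\|(y_k,f_k)-(y,f)\|_{\mathcal{A}_0(s)}$, so the two quantities are comparable. For $p\in(0,1]$, \eqref{C1Ebis} and \eqref{decayEunquart} show that after finitely many iterations $c_2(s)\sqrt{E(s,y_{k+1},f_{k+1})}\le\bigl(c_2(s)\sqrt{E(s,y_k,f_k)}\bigr)^{1+p}$ with $c_2(s)\sqrt{E(s,y_k,f_k)}<1$, so that tail of the series is dominated by a fixed multiple of its first term; combining the three comparisons gives $\|(y_{k+1},f_{k+1})-(y,f)\|_{\mathcal{A}_0(s)}\le C\,\|(y_k,f_k)-(y,f)\|_{\mathcal{A}_0(s)}^{1+p}$ for $k$ large, while \eqref{decreaseEk} yields the at-least-linear decay for all $k$. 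For $p=0$, \eqref{estimp0} gives directly $\sqrt{E(s,y_k,f_k)}\le c_1(s)^k\sqrt{E(s,y_0,f_0)}$ with $c_1(s)<1$, hence geometric convergence of $(y_k,f_k)$ to $(y,f)$ in $\mathcal{A}(s)$.

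I expect the main obstacle to be the passage to the limit in the nonlinear term $g(y_k)\to g(y)$ in $L^2(\rho_0(s);Q_T)$: this is precisely where the uniform bound $\|y_k\|_{L^\infty(Q_T)}\le M_0$ of Proposition \ref{convergenceEk} is indispensable, for it both supplies a uniform Lipschitz constant $\psi(M_0)$ for $g$ along the sequence and guarantees $\|y\|_{L^\infty(Q_T)}\le M_0$ for the limit; without it the control of $\rho_0(s)\bigl(g(y)-g(y_k)\bigr)$ by $\|(y-y_k,f-f_k)\|_{\mathcal{A}_0(s)}$ would break down.
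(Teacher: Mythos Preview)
Your proof is correct and follows essentially the paper's approach: the summability of $\sqrt{E(s,y_k,f_k)}$ from Proposition \ref{convergenceEk} makes $(y_k,f_k)$ Cauchy in $\mathcal{A}(s)$, the limit is identified as a zero of $E$, and the rate for $(y_k,f_k)$ is inherited from that of $E$ via the tail estimate $\|(y,f)-(y_k,f_k)\|_{\mathcal{A}_0(s)}\le Cm\sum_{j\ge k}\sqrt{E(s,y_j,f_j)}$. The only variation is in identifying the limit---the paper passes to the limit in \eqref{heat-Y1k} using $(Y_k^1,F_k^1)\to 0$ in $\mathcal{A}_0(s)$, whereas you argue directly via a Lipschitz-type bound on $\sqrt{E}$---and your two-sided comparison between $\sqrt{E(s,y_k,f_k)}$ and $\|(y_k,f_k)-(y,f)\|_{\mathcal{A}_0(s)}$ makes the transfer of the $1+p$ rate slightly more explicit than the paper's one-sided bound \eqref{estim_coercivity}.
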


\begin{proof}   For all $k\in \mathbb{N}$, let $F_k=- \sum_{n=0}^k \lambda_n F_n^1$ and $Y_k= \sum_{n=0}^k \lambda_n Y_n^1$. Let us prove that  $\big ((Y_k,F_k )\big)_{k\in\mathbb{N}}$ converges in $\mathcal{A}_0(s)$, i.e. that the series $\sum \lambda_n (F_n^1,Y_n^1)$ converges in $ \mathcal{A}_0(s)$. 
Using that $\Vert(Y_k^1, F_k^1)\Vert_{\mathcal{A}_0(s)}\leq C(M_0) \sqrt{E(s,y_k,f_k)}$ for all $k\in\N$ (see \eqref{estimateF1Y1}), we write, using \eqref{52} and \eqref{sommeE} :
$$
\sum_{n=0}^k\lambda_n\Vert(Y_n^1, F_n^1)\Vert_{\mathcal{A}_0(s)}\le m \sum_{n=0}^k\Vert(Y_n^1, F_n^1)\Vert_{\mathcal{A}_0(s)}
\le C(M_0) \sum_{n=0}^k\sqrt{E(s,y_n,f_n)}\leq \sqrt{E(s,y_0,f_0)} \frac{C(M_0)}{1-p_0(\widetilde{\lambda_0})}.
$$
We deduce that  the series $\sum_n \lambda_n (Y_n^1,F_n^1)$ is normally convergent and so convergent.
Consequently, there exists $(Y,F)\in\mathcal{A}_0(s)$ such that $(Y_k,F_k)_{k\in\N}$ converges to $(Y,F)$ in $\mathcal{A}_0(s)$.

Denoting $y=y_0+Y$ and $f=f_0+F$, we then have that $(y_k,f_k)_{k\in\N}=(y_0+Y_k,f_0+F_k)_{k\in\N}$ converges to $(y,f)$ in $\mathcal{A}(s)$.

It suffices now to verify that the limit $(y,f)$ satisfies $E(s,y,f)=0$. Using that $(Y^1_k, F^1_k)$ goes to zero in $\mathcal{A}_0(s)$ as $k\to \infty$, we pass to the limit in (\ref{heat-Y1k})
and get that $(y,f)\in \mathcal{A}(s)$ solves (\refeq{heat-NL}), that is $E(s,y,f)=0$. Moreover, we have 
\begin{equation}\label{estim_coercivity}
\Vert (y,f)-(y_k,f_k)\Vert_{\mathcal{A}_0(s)} \leq C(M_0)\sqrt{E(s,y_k,f_k)}, \quad \forall k>0
\end{equation}
which implies, using Proposition \ref{convergenceEk}, the announced order of convergence.  Precisely, 
\begin{equation}
\nonumber
\begin{aligned}
\Vert (y,f)-(y_k,f_k)\Vert_{\mathcal{A}_0(s)} & =\big\Vert \sum_{p=k+1}^{\infty} \lambda_p (Y^1_p,F^1_p)\big\Vert_{\mathcal{A}_0(s)}\leq m\sum_{p=k+1}^{\infty}  \Vert (Y^1_p,F^1_p) \Vert_{\mathcal{A}_0(s)}\\ 
& \leq m\, C(M_0)\sum_{p=k+1}^{\infty}  \sqrt{E(s,y_p,f_p)}\\
& \leq m\, C(M_0)\sum_{p=k+1}^{\infty}  p_{0}(\widetilde{\lambda}_0)^{p-k}\sqrt{E(s,y_k,f_k)}\\
& \leq m\, C(M_0)\frac{p_{0}(\widetilde{\lambda}_0)}{1-p_{0}(\widetilde{\lambda}_0)}\sqrt{E(s,y_{k},f_{k})}.
\end{aligned}
\end{equation}
\end{proof}

We emphasize, in view of the non uniqueness of the zeros of $E$, that an estimate (similar to \eqref{estim_coercivity}) of the form  $\Vert (y,f)-(\overline{y},\overline{f})\Vert_{\mathcal{A}_0(s)} \leq C(M_0) \sqrt{E(s,\overline{y},\overline{f})}$ does not hold for all $(\overline{y},\overline{f})\in\mathcal{A}(s)$. We also mention the fact that the sequence $(y_k,f_k)_{k\in \mathbb{N}}$ and its limits $(y,f)$ are uniquely determined from the initial guess $(y_0,f_0)$ and from our criterion of selection of the pair $(Y_k^1,F^1_k)$ for every $k$. In other words, the solution $(y,f)$ is unique up to the element $(y_0,f_0)$ and the functional $J$.

We also have the following convergence of the optimal sequence $(\lambda_k)_{k\in \mathbb{N}}$. 

\begin{lemma}\label{lambda-k-go-1}
Under hypotheses of Theorem \eqref{ths1} with $p\in (0,1]$, the  sequence $(\lambda_k)_{k\in \mathbb{N}}$ defined in (\ref{algo_LS_Y}) converges to $1$ as $k\to \infty$.
\end{lemma}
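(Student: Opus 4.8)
The plan is to pit the optimality of $\lambda_k$ in \eqref{algo_LS_Y} against the admissible choice $\lambda=1$, for which the upper bound \eqref{estimW1} is very favorable, and to complement \eqref{estimW1} with a matching \emph{lower} bound for $E$ along the ray $\lambda\mapsto(y_k,f_k)-\lambda(Y^1_k,F^1_k)$. For the latter, I would start again from the identity \eqref{E_expansionb} established in the proof of Lemma \ref{estimW1lemma},
$$
\sqrt{2E\big(s,(y,f)-\lambda(Y^1,F^1)\big)} = \Big\Vert \rho_0(s)(1-\lambda)\big(\partial_t y-\partial_{xx}y+g(y)-f1_\omega\big) + \rho_0(s)\,l(y,-\lambda Y^1)\Big\Vert_{L^2(Q_T)},
$$
and use the reverse triangle inequality together with the bound on $\Vert\rho_0(s)\,l(y,-\lambda Y^1)\Vert_{L^2(Q_T)}$ already obtained there (through \eqref{38}, Remark \ref{remark2}, \eqref{estimateF1Y1} and \eqref{estimateF1Y1rho}). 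This gives, for every $\lambda\ge 0$ and every $(y,f)\in\mathcal{A}(s)$ with $s\ge\max(\Vert g'(y)\Vert_{L^\infty(Q_T)}^{2/3},s_0)$,
$$
\sqrt{E\big(s,(y,f)-\lambda(Y^1,F^1)\big)} \ \ge\ \sqrt{E(s,y,f)}\Big(\vert 1-\lambda\vert - \lambda^{p+1}\,\widehat c_1(s)\,\sqrt{E(s,y,f)}^{\,p}\Big),
$$
where $\widehat c_1(s)$ has, up to a fixed multiplicative constant, the same expression as $c_1(s)$ in \eqref{c1}; in particular it is a finite number for the value of $s$ selected in Theorem \ref{ths1}.

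Next, set $e_k:=\sqrt{E(s,y_k,f_k)}$. If $e_k=0$ for some $k$, then $(y_k,f_k)$ is already a controlled pair, the iteration is stationary and there is nothing to prove; so I may assume $e_k>0$ for all $k$, and by Proposition \ref{convergenceEk} one has $e_k\to 0$. Since $m\ge 1$, the value $\lambda=1$ is admissible in the minimization defining $\lambda_k$, hence by \eqref{estimW1} at $\lambda=1$,
$$
\sqrt{E(s,y_{k+1},f_{k+1})} \ \le\ \sqrt{E\big(s,(y_k,f_k)-(Y^1_k,F^1_k)\big)} \ \le\ c_1(s)\,e_k^{\,1+p};
$$
on the other hand, the lower bound above applied at $\lambda=\lambda_k$ (recall $(y_{k+1},f_{k+1})=(y_k,f_k)-\lambda_k(Y^1_k,F^1_k)$) yields
$$
\sqrt{E(s,y_{k+1},f_{k+1})} \ \ge\ e_k\big(\vert 1-\lambda_k\vert - \lambda_k^{p+1}\,\widehat c_1(s)\,e_k^{\,p}\big).
$$
Combining the two displays, dividing by $e_k>0$ and using $0\le\lambda_k\le m$, I get
$$
\vert 1-\lambda_k\vert \ \le\ \big(c_1(s)+m^{p+1}\widehat c_1(s)\big)\,e_k^{\,p}.
$$
Since $p>0$ and $e_k\to 0$, the right-hand side tends to $0$, whence $\lambda_k\to 1$.

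The only genuinely new ingredient is the lower bound of the first step; it is a mild reworking of a computation already carried out for Lemma \ref{estimW1lemma}, the sole point needing care being to use the reverse triangle inequality in the correct direction (and to set aside the degenerate case $e_k=0$). All the rest follows from the optimality of $\lambda_k$ and from the decay $E(s,y_k,f_k)\to 0$ proved in Proposition \ref{convergenceEk}.
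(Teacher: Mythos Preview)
Your proof is correct. Both your argument and the paper's start from the same identity \eqref{E_expansionb}, but you organize the endgame differently: you obtain a two-sided estimate on $\sqrt{E}$ along the ray---the upper bound \eqref{estimW1} at $\lambda=1$ combined with a lower bound from the reverse triangle inequality at $\lambda=\lambda_k$---and read off $|1-\lambda_k|\le Ce_k^{\,p}$ directly. The paper instead expands the square in \eqref{E_expansionb}, solves for $(1-\lambda_k)^2$, bounds the cross term by Cauchy--Schwarz, and then invokes the fact that the ratio $E(s,y_{k+1},f_{k+1})/E(s,y_k,f_k)\to 0$. Your route is a touch more elementary (no need to appeal separately to the decay of the ratio; it is built into your use of the upper bound at $\lambda=1$ together with the optimality of $\lambda_k$) and yields the slightly sharper quantitative form $|1-\lambda_k|=O(e_k^{\,p})$, whereas the paper only concludes $(1-\lambda_k)^2\to 0$. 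The handling of the degenerate case $e_k=0$ is the same minor technicality in both approaches.
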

\begin{proof}  
If $p\in(0,1]$, in view of \eqref{E_expansionb}  we have, as long as  $E(y_k,f_k)>0$, since $\lambda_k\in[0,m]$ 
$$\begin{aligned}
(1-\lambda_k)^2
&=\frac{E(s,y_{k+1},f_{k+1})}{E(s,y_{k},f_k)}-(1-\lambda_k)\frac{\big( \rho_0(s)\big(\partial_ty_{k}+\Delta y_k+g(y_k)-f_k\,1_\omega\big),\rho_0(s)l(y_k,-\lambda_k Y_k^1) \big)_{L^2(Q_T)}}{E(s,y_{k},f_k)}\\
& \hspace{3cm}- \frac{\bigl \Vert\rho_0(s)l(y_k,-\lambda_k Y_k^1)\bigr\Vert^2_{L^2(Q_T)}}{2E(s,y_{k},f_k)}\\
&\le \frac{E(s,y_{k+1},f_{k+1})}{E(s,y_{k},f_k)}-(1-\lambda_k)\frac{\big( \rho_0(s)\big(\partial_ty_{k}+\Delta y_k+g(y_k)-f_k\,1_\omega\big),\rho_0(s)l(y_k,-\lambda_k Y_k^1) \big)_{L^2(Q_T)}}{E(s,y_{k},f_k)}\\
&\le \frac{E(s,y_{k+1},f_{k+1})}{E(s,y_{k},f_k)}+\sqrt{2}m\frac{\sqrt{E(s,y_k,f_k)}\|\rho_0(s)l(y_k,-\lambda_k Y_k^1) \|_{L^2(Q_T)}}{E(s,y_{k},f_k)}\\
&\le \frac{E(s,y_{k+1},f_{k+1})}{E(s,y_{k},f_k)}+\sqrt{2}m\frac{\|\rho_0(s)l(y_k,-\lambda_k Y_k^1) \|_{L^2(Q_T)}}{\sqrt{E(s,y_k,f_k)}}.
\end{aligned}
$$
But, from \eqref{38},  \eqref{estimateF1Y1} and Remark \ref{remark2}, we infer that 
$$
\begin{aligned}
\|\rho_0(s)l(y_k,-\lambda_k Y_k^1) \|_{L^2(Q_T)}
&\le [g^\prime]_p\frac{\lambda_k^{p+1}}{p+1}\Vert Y\Vert_{L^\infty(Q_T)}^p \Vert\rho(s) Y\Vert_{L^2(Q_T)}\\
&\le\lambda_k^{p+1}\frac{Cs^{-3/2}e^{-\frac{3p}2s}}{p+1}  [g']_p E(s,y_k,f_k)^{\frac{p+1}2}\\
&\le C(s) m^{p+1} [g']_p E(s,y_k,f_k)^{\frac{p+1}2}
\end{aligned}
$$
and thus 
$$
(1-\lambda_k)^2\le \frac{E(s,y_{k+1},f_{k+1})}{E(s,y_{k},f_k)}+m^{p+2}  C(s)[g']_p(E(s,y_k,f_k)) ^{p/2}.
$$
Consequently, since $E(s,y_{k},f_k)\to 0$ and $\frac{E(s,y_{k+1},f_{k+1})}{E(s,y_{k},f_k)}\to 0$, we deduce that $(1-\lambda_k)^2\to 0$ as $k\to\infty$.
\end{proof} 

If $p=0$ and if $s$ is large enough, then $c_1(s)<1$ and $\widetilde{\lambda_k}=1$ for every $k\in \mathbb{N}$ leading to the decay of $(E(s,y_k,f_k))_{k\in\mathbb{N}}$ to $0$ (see \eqref{estimp0}). Moreover, estimate (\ref{estimW1}) implies that the sequence $(\lambda_k)_{k\in \N}$
with $\lambda_k=1$ for every $k$ also leads to the decay $(E(s,y_k,f_k))_{k\in\mathbb{N}}$ with an order at least linear. Whether or not this constant sequence if the optimal one (as defined in \eqref{algo_LS_Y}).

\begin{remark}
In Theorem \ref{ths1}, the sequence $(y_k,f_k)_{k\in \mathbb{N}}$ is initialized with the solution of minimal norm corresponding to $g\equiv 0$. This  natural choice in practice leads to a precise estimate of $\sqrt{E(s,y_0,f_0)}$ with respect to the parameter $s$. Many other pairs are available  such as for instance the pair $(y_0,f_0)=(y,0)=(\phi \,y^\star,0)$ constructed in Lemma \ref{existence_triplet} since it leads to the following estimate in term of $s$:
$$
\begin{aligned}
\sqrt{E(s,\phi y^\star,0)}&=\frac{1}{\sqrt{2}}\Vert \rho_0\big(\partial_t (\phi y^\star)-\Delta (\phi y^\star)+g(\phi y^\star)\big)\Vert_2=\frac{1}{\sqrt{2}}\Vert \rho_0(s)\big(\phi_t y^\star+g(\phi y^\star)\big)\Vert_2\\
& \leq \frac{T}{2}\Vert \rho_0(s)\Vert_{L^\infty(Q_{T/2})}\Vert \partial_t\phi\Vert_{L^\infty(Q_{T/2})} \Vert y^\star\Vert_{L^\infty(Q_{T/2})}+\Vert \rho_0(s) g(\phi y^\star))\Vert_2\\
& \leq C(T)\bigg(\Vert \rho_0(s)\Vert_{L^\infty(Q_{T/2})}\Vert u_0\Vert_{H^1(\Omega)}+\psi (\|\phi y^\star\|_{L^\infty(Q_{T/2})})\|\rho(s)\phi y^\star\|_{L^\infty(Q_{T/2})}\biggr)\\
& \leq C(T)\Vert \rho(s)\Vert_{L^\infty(Q_{T/2})} \Vert u_0\Vert_{H^1(\Omega)}\biggl(1+\psi (\|u_0\|_{L^\infty(\Omega)})\biggr)\\
& \leq C(T) \Vert u_0\Vert_{H^1(\Omega)}\biggl(1+\alpha+\beta\ln^{3/2}(1+\Vert u_0\Vert_{H^1(\Omega)})\biggr)e^{s\Vert\varphi \Vert_{L^\infty(Q_{T/2})}}.
\end{aligned}
$$
\end{remark}

\begin{remark}
As stated in Theorem \ref{ths1}, the convergence is at least of order $1+p$ after a number $k_0$ of iterations. Using \eqref{decayEunquart}, $k_0$
is given by 
\begin{equation}\label{defk0}
k_0 = \left\lfloor  \frac{1+p}{p} \left((1+p)^{1/p} c_2(s) \sqrt{E(s,y_0,f_0)} - 1 \right) \right\rfloor + 1 ,
\end{equation}
(where $\lfloor \cdot\rfloor$ is the integer part) if $(1+p)^{1/p} c_2(s) \sqrt{E(s,y_0,f_0)} - 1 >0$, and $k_0=1$ otherwise.

\end{remark}

\section{Comments} \label{sec:remarks}

Several comments are in order.

\paragraph{Asymptotic condition.}
The asymptotic condition \ref{growth_condition} on $g^\prime$ is slightly stronger than the asymptotic condition \ref{asymptotic_g} made in \cite{EFC-EZ}: this is due to our linearization of \eqref{heat-NL} which involves $r\to g^{\prime}(r)$ while the linearization \eqref{NL_z} in \cite{EFC-EZ} involves $r\to g(r)/r$. There exist cases covered by Theorem \ref{nullcontrollheatplus} in which exact controllability for \eqref{heat-NL} is true but that are not covered by Theorem \ref{ths1}. Note however that the example $g(r)=a+b r+ c r \ln^{3/2}(1+\vert r\vert)$, for any $a,b\in \mathbb{R}$ and for any $c>0$  small enough (which is somehow the limit case in Theorem \ref{nullcontrollheatplus}) satisfies \ref{growth_condition} as well as 
\ref{constraint_g_holder} for any $p\in [0,1]$. 

While Theorem \ref{nullcontrollheatplus} was established in \cite{EFC-EZ} by a nonconstructive fixed point argument, we obtain here, in turn, a new proof of the exact controllability of semilinear multi-dimensional wave equations, which is moreover constructive, with an algorithm that converges unconditionally, at least with order $1+p$.

\paragraph{Minimization functional.} The estimate  \eqref{estimateF1Y1} is a key point in the convergence analysis and is independent of the choice of the functional $J$ defined by $J(y,f)=\frac{1}{2}\Vert \rho_0(s) f\Vert^2_{L^2(q_T)}+\frac{1}{2}\Vert \rho(s) y \Vert^2_{L^2(Q_T)}$  (see Proposition \ref{controllability_result}) in order to select a pair $(Y^1,F^1)$ in $\mathcal{A}_0(s)$. Thus, we may consider other weighted functionals, for instance $J(y,f)=\frac{1}{2}\Vert \rho_0(s) f\Vert^2_{L^2(q_T)}$ as discussed in \cite{DeSouza_Munch_2016}.

\paragraph{Link with Newton method.}
If we introduce $F:\mathcal{A}(s)\to L^2(Q_T)$ by $F(y,f):=\rho_0^{-1}(s)(\partial_t y-\partial_{xx} y + g(y)-f\,1_\omega)$, we get that $E(s,y,f)=\frac{1}{2}\Vert F(y,f)\Vert_{L^2(Q_T)}^2$ and check that, for $\lambda_k=1$, the algorithm \eqref{algo_LS_Y} coincides with the Newton algorithm associated with the mapping $F$. This explains the super-linear convergence in Theorem \ref{ths1}. The optimization of the parameter $\lambda_k$ is crucial here as it allows to get a global convergence result. Its leads to so-called damped Newton method (for $F$) (we refer to \cite[Chapter 8]{deuflhard}). As far as we know, the analysis of damped type Newton methods  for partial differential equations has deserved very few attention in the literature. We mention \cite{lemoinemunch_time, saramito} in the context of fluid mechanics.

\paragraph{A variant.}
To simplify, let us take $\lambda_k=1$, as in the standard Newton method. Then, for each $k\in\N$, the optimal pair $(Y_k^1,F_k^1)\in \mathcal{A}_0$ is such that the element $(y_{k+1},f_{k+1})$ minimizes over $\A(s)$ the functional $(z,v)\to J(z-y_k,v-f_k)$. 
Alternatively, we may select the pair $(Y_k^1,F_k^1)$ so that the element $(y_{k+1},f_{k+1})$ minimizes the functional $(z,v)\to J(z,v)$. This leads to the sequence $(y_k,f_k)_{k\in\N}$ defined by 
 \begin{equation}
\label{eq:wave_lambdakequal1}
\left\{
\begin{aligned}
& \partial_{t}y_{k+1} - \partial_{xx} y_{k+1} +  g^{\prime}(y_k) y_{k+1} = f_{k+1} 1_{\omega}+g^\prime(y_k)y_k-g(y_k) & \textrm{in}\  Q_T,\\
& y_k=0  \,\,\,\textrm{on}\,\,\,  \Sigma_T, \quad y_{k+1}(\cdot,0)=u_0 \,\,\,\textrm{in}\,\,\,  \Omega.
\end{aligned}
\right.
\end{equation}
In this case, for every $k\in\N$, $(y_k,f_k)$ is a controlled pair for a linearized heat equation, while, in the case of the algorithm \eqref{algo_LS_Y}, $(y_k,f_k)$ is a sum of controlled pairs $(Y^1_j,F^1_j)$ for $0\leq j\leq k$.
This analysis of this variant used in \cite{EFC-AM} is apparently less straightforward.
%

%

\paragraph{Local controllability when removing the growth condition \ref{growth_condition}.}
As in \cite{munch_lemoine_waveND, munch_trelat} devoted to the wave equations, we may expect to remove the growth condition \ref{growth_condition} on $g^\prime$ if the initial value $E(s,y_0,f_0)$ is small enough. For $s$ fixed, in view of Lemma \ref{y0f0g0}, this is notably true if $g(0)=0$ and if the norm $\Vert u_0\Vert_{H_0^1(\Omega)}$ of the initial data to be controlled is small enough. This would allow to recover the local controllability of the heat equation (usually obtained by an inverse mapping theorem, see \cite[chapter 1]{fursikov-imanuvilov}) and would be in agreement with the usual convergence of the standard Newton method. In the parabolic case considered here, the proof is however open, since in order to prove the convergence of $(E(s,y_k,y_k))_{k\in\mathbb{N}}$ to zero, for some $s$ large enough independent of $k$, we need to prove that the sequence $(\Vert y_k\Vert_{L^\infty(Q_T)})_{k\in \mathbb{N}}$ is bounded. This is in contrast with the wave equation where the parameter $s$ does not appear.

%

\paragraph{Weakening of the condition \ref{constraint_g_holder}.}

Given any $p\in [0,1]$, we introduce for any $g\in C^1(\R)$ the following hypothesis : 
\begin{enumerate}[label=$\bf (\overline{H}^{\prime}_p)$,leftmargin=1.5cm]
\item\label{constraint_g_holderbis}\ There exist $\overline{\alpha},\overline{\beta},\gamma\in \R^+$ such that 
$\vert g^\prime(a)-g^\prime(b)\vert \leq \vert a-b\vert^p \big(\overline{\alpha}+\overline{\beta}(\vert a \vert^\gamma +\vert b\vert^\gamma)\big), \quad \forall a,b\in \R$
\end{enumerate}
which coincides with \ref{constraint_g_holder} if $\gamma=0$ for $\overline{\alpha}+2\overline{\beta}=[g^\prime]_p$. If $\gamma\in (0,1)$ is small enough and related to the constant $\beta$ appearing in the growth condition \ref{growth_condition}, Theorem \ref{ths1} still holds if \ref{constraint_g_holder} is replaced by the weaker hypothesis \ref{constraint_g_holderbis}. 
%

\paragraph{Influence of the parameter $s$ and a simpler linearization.}
Taking $s$ large enough in the case $p=0$ (corresponding to $g^\prime\in L^\infty(\R)$) allows to ensure that the coefficient $c_1(s)$ (see \eqref{c1}) is strictly less than one, and then to prove the strong convergence of the sequence $(y_k,f_k)_{k\in \mathbb{N}}$. This highlights  the influence of the parameter $s$ appearing in the Carleman weights $\rho$, $\rho_0$ and $\rho_1$. Actually, in this case, a similar convergence can be obtained by considering a simpler linearization of the system (\ref{heat-NL}). For any $s\geq s_0$ and $z\in L^2(\rho(s), Q_T)$, we define the controlled pair $(y,f)\in \mathcal{A}(s)$ solution of  
\begin{equation}
\nonumber
\left\{
\begin{aligned}
& \partial_ty - \partial_{xx} y =  f 1_{\omega}-g(z) \quad  \textrm{in}\quad Q_T,\\
& y=0 \,\,\, \textrm{on}\,\,\, \Sigma_T, \quad y(\cdot,0)=u_0 \,\,\, \textrm{in}\,\,\, \Omega,
\end{aligned}
\right.
\end{equation}
and which minimizes the weighted cost $J$. If $g(0)=0$ and $g$ is globally Lipschitz, then $\rho_0(s) g(z)\in L^2(Q_T)$ and Theorem \ref{controllability_result} implies  $\Vert \rho(s)\, y\Vert_{L^2(Q_T)}\leq  Cs^{-3/2} \big(\|\rho_0(s)g(z)\|_{L^2(Q_T)} +e^{cs }\|u_0\|_2 \big)$.
This allows to define the operator $K:L^2(\rho(s),Q_T)\to L^2(\rho(s),Q_T)$ by $y:=K(z)$. From Lemma \ref{solution-controle}, for any $z_i\in L^2(\rho(s),Q_T)$, $i=1,2$, $y_i:=K(z_i)$ is given 
by $y_i=\rho^{-2}(s)L^\star_0 p_i$ where $p_i\in P_0$ solves 
\begin{equation}
\nonumber
(p_i,q)_P=\int_\Omega u_0q(0)-\int_{Q_T}  g(z_i) q ,\quad \forall q\in P_0.
\end{equation}
Taking $q:=p_1-p_2$, we then get 
$$
\begin{aligned}
\Vert p_1-p_2\Vert_P^2 & \leq  \int_{Q_T} \vert g(z_1)-g(z_2)\vert \vert p_1-p_2\vert \\
& \leq \Vert \rho_0(s)(g(z_1)-g(z_2))\Vert_2 \Vert \rho^{-1}_0(s)(p_1-p_2)\Vert_2\\
& \leq \Vert g^\prime\Vert_{L^\infty(\R)}\Vert \rho_0(s)(z_1-z_2)\Vert_2 \Vert \rho^{-1}_0(s)(p_1-p_2)\Vert_2.
\end{aligned}
$$
Using that $\rho_0\leq \rho$ and Lemma \ref{carleman}, we obtain 
$$
\big\Vert \rho(s)\big(K(z_1)-K(z_2)\big)\big\Vert_2\leq \Vert g^\prime\Vert_\infty C \lambda_0^4 s^{-3/2} \Vert \rho(s)(z_1-z_2)\Vert_2, \quad \forall z_1,z_2\in L^2(\rho(s),Q_T)
$$
and conclude that, if $s> \max(s_0, (C\lambda_0^4 \Vert g^\prime\Vert_\infty)^{2/3})$, then the operator $K$ is contracting. This allows to infer the convergence of the sequence $(y_{k})_{k\in \mathbb{N}}$ defined by $y_{k+1}=K(y_k)$, $k\geq 0$ for any $y_0\in L^2(\rho(s),Q_T)$ to a controlled solution of \eqref{heat-NL}. On order to replace the assumption $g^\prime\in L^\infty(\R)$ by \ref{asymptotic_g}, one needs to show some compactness properties for $K$, which is an open question. 

\paragraph{The linearization \eqref{NL_z} associated with the weighted cost $J$.}
Similarly, one can wonder if a parameter $s$ large enough may leads to a contracting property for the operator $\Lambda$ introduced in \cite{EFC-EZ} and leading to the linearization \eqref{NL_z}. For any $\beta>0$, we introduce the hypothesis
\begin{enumerate}[label=$\bf (H_1^\prime)$,leftmargin=1.5cm]
\item\label{asymptotic_g_bis}\ $\limsup_{\vert r\vert \to \infty} \frac{\vert g(r)\vert }{\vert r\vert \ln^{3/2}\vert r\vert}\leq \beta$
\end{enumerate}
similar to \ref{asymptotic_g}. Then, as \cite{EFC-EZ}, the linearization \eqref{NL_z} also leads to a compactness property when associated with the weighted cost $J$. 

\begin{prop}\label{TzMM}
Assume that $g$ satisfies \ref{asymptotic_g_bis} with $c\beta^{2/3}<1$ with $c=\Vert \varphi(0,\cdot)\Vert_\infty$. Let $z\in L^\infty(Q_T)$ and $s\geq \max(\Vert \widetilde{g}(z)\Vert^{2/3}_\infty,s_0)$.
Let $(y,f)\in \A(s)$ the minimizer of the functionnel $J$ and solution of 
\begin{equation}
\nonumber
\left\{
\begin{aligned}
& \partial_t y - \partial_{xx} y +  y\,\widetilde{g}(z)= f 1_{\omega} \quad  \textrm{in}\quad Q_T,\\
& y=0 \,\,\, \textrm{on}\,\,\, \Sigma_T, \quad y(\cdot,0)=u_0 \,\,\, \textrm{in}\,\,\, \Omega.
\end{aligned}
\right. 
\end{equation}
There exists $M>0$ such that if $\Vert z\Vert_{\infty}\leq M$ and $s:=\max(\psi(M)^{2/3},s_0)$ with $\psi(r)=\alpha+\beta\ln^{3/2}(1+\vert r\vert)$,
then  $\Vert y\Vert_{L^\infty(Q_T)}\leq M$. We note $\Lambda_s:L^\infty(Q_T)\to L^\infty(Q_T)$ such that $y=\Lambda_s(z)$.
\end{prop}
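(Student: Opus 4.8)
The plan is to apply Theorem~\ref{controllability_result} to the linear system with potential $A=\widetilde g(z)\in L^\infty(Q_T)$, zero source term $B\equiv 0$ and initial datum $z_0=u_0\in H^1_0(\Omega)$, and then to rerun the fixed-point-in-$M$ argument already used in Proposition~\ref{Estimation-2-yk+1} and in the definition \eqref{def_M0} of $M_0$. Theorem~\ref{controllability_result} directly provides the $J$-minimizing controlled pair $(y,f)$; one checks it belongs to $\mathcal{A}(s)$ exactly as in Remark~\ref{remark2} (the membership $\rho_0(s)(\partial_ty-\partial_{xx}y)\in L^2(Q_T)$ follows from the equation, from $\rho_0\le\rho$ and from $\widetilde g(z)\in L^\infty(Q_T)$). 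So $\Lambda_s(z):=y$ is well defined, and the whole content of the statement is the self-map property of a ball.

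First I would control $\|\widetilde g(z)\|_{L^\infty(Q_T)}$ by $\psi(\|z\|_{L^\infty(Q_T)})$. Since $g\in C^1(\R)$ and $g(0)=0$, the quotient $\widetilde g$ is continuous on $\R$, hence bounded on every compact set; combined with \ref{asymptotic_g_bis}, which bounds $|g(r)|/(|r|\ln^{3/2}|r|)$ at infinity by $\beta$, one obtains, after enlarging $\alpha\geq 0$ if necessary, the pointwise estimate $|\widetilde g(r)|\leq \psi(r)=\alpha+\beta\ln^{3/2}(1+|r|)$ for every $r\in\R$ (here $\beta$ may have to be replaced by $\beta+\varepsilon$ with $\varepsilon>0$ so small that the strict inequality $c(\beta+\varepsilon)^{2/3}<1$ is preserved; we keep writing $\beta$). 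As $\psi$ is nondecreasing in $|r|$, this gives $\|\widetilde g(z)\|_{L^\infty(Q_T)}\leq\psi(\|z\|_{L^\infty(Q_T)})$. Now fix $M>0$, assume $\|z\|_{L^\infty(Q_T)}\leq M$ and set $s:=\max(\psi(M)^{2/3},s_0)$, so that $s\geq\max(\|\widetilde g(z)\|_{L^\infty(Q_T)}^{2/3},s_0)$ and Theorem~\ref{controllability_result} applies; estimate \eqref{estimation1bis} with $B\equiv 0$ and $e^{-\frac32 s}\le 1$ yields
$$
\|y\|_{L^\infty(Q_T)}\leq Ce^{-\frac32 s}\big(1+\|\widetilde g(z)\|_{L^\infty(Q_T)}\big)e^{cs}\|u_0\|_{H^1_0(\Omega)}\leq C\big(1+\psi(M)\big)e^{cs}\|u_0\|_{H^1_0(\Omega)}.
$$

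Exactly as in \eqref{u0linfty}, I would then use $s\leq s_0+\psi(M)^{2/3}$ together with $(a+b)^{2/3}\leq a^{2/3}+b^{2/3}$ to get $e^{cs}\leq e^{cs_0}e^{c\alpha^{2/3}}(1+M)^{c\beta^{2/3}}$, and absorb the logarithmically growing factor $1+\psi(M)$ into a power $(1+M)^\delta$ with $\delta>0$ chosen so small that $\theta:=c\beta^{2/3}+\delta<1$ (possible since $c\beta^{2/3}<1$ by hypothesis). This produces a constant $C'$ depending only on $T,\Omega,\omega,\lambda_0,\alpha,\beta,s_0,\|u_0\|_{H^1_0(\Omega)}$, independent of $M$ and $z$, with $\|y\|_{L^\infty(Q_T)}\leq C'(1+M)^{\theta}$. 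Since $\theta<1$, the map $M\mapsto C'(1+M)^{\theta}-M$ is positive at $M=0$ and tends to $-\infty$ as $M\to\infty$, so the set $\{M>0:\ C'(1+M)^\theta\leq M\}$ is non-empty; taking $M$ in this set (e.g.\ its infimum, as in \eqref{def_M0}) we get $\|y\|_{L^\infty(Q_T)}\leq C'(1+M)^\theta\leq M$, i.e.\ $\Lambda_s$ maps $\overline{B}(0,M)\subset L^\infty(Q_T)$ into itself. The only genuinely delicate step is the first one: passing from the $\limsup$ condition \ref{asymptotic_g_bis} to a clean pointwise bound $|\widetilde g(r)|\leq\psi(r)$ with a constant $\beta$ still satisfying $c\beta^{2/3}<1$; the remaining steps are a direct rerun of the estimates already established for the algorithm \eqref{algo_LS_Y}.
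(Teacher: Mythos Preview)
Your proof is correct and follows essentially the same route as the paper: bound $|\widetilde g(r)|\le\psi(r)$, use $(a+b)^{2/3}\le a^{2/3}+b^{2/3}$ to control $e^{cs}$ by $e^{c(s_0+\alpha^{2/3})}(1+M)^{c\beta^{2/3}}$, derive an $L^\infty$ bound on $y$ that grows sublinearly in $M$, and conclude that a large enough ball is mapped into itself. The only cosmetic differences are that you invoke \eqref{estimation1bis} directly, whereas the paper combines \eqref{estimation1} with a separate parabolic $L^\infty$ estimate, and that you absorb the logarithmic factor $1+\psi(M)$ into a small power $(1+M)^\delta$, while the paper simply observes that the whole expression divided by $M$ tends to $0$; both lead to the same conclusion.
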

\begin{proof}
 \ref{asymptotic_g_bis} implies that  $\vert \widetilde{g}(r)\vert\leq \psi(r):=\alpha+\beta \ln^{3/2}(1+\vert r\vert)$ for all $r\in \R$.
Consequently, $\Vert \widetilde{g}(z)\Vert_{L^\infty(Q_T)}\leq \psi(\Vert z\Vert_{L^\infty(Q_T)})=\alpha+\beta \ln^{3/2}(1+\Vert z\Vert_{L^\infty(Q_T)})$ leading to 
$c\Vert \widetilde{g}(z)\Vert_\infty^{2/3}\leq c\alpha^{2/3}+c\beta^{2/3} \ln(1+\Vert z\Vert_{L^\infty(Q_T)})$ and then to 
$e^{c s}\leq e^{c(s_0+\alpha^{2/3})} (1+\Vert z\Vert_{L^\infty(Q_T)})^{c\beta^{2/3}}$.
Estimate \eqref{estimation1} then implies 
$
\Vert \rho(s)\, y\Vert_{L^2(Q_T)}+ \Vert \rho_0(s)\,f\Vert_{L^2(q_T)} \leq  e^{c(s_0+\alpha^{2/3})} (1+\Vert z\Vert_\infty)^{c\beta^{2/3}}\|u_0\|_2
$
and in particular, since $\rho(s),\rho_0(s)\geq 1$ that 
\begin{equation}
\label{estimyz}
\Vert y\Vert_{L^2(Q_T)}+ \Vert f\Vert_{L^2(q_T)} \leq  e^{c(s_0+\alpha^{2/3})} (1+\Vert z\Vert_{L^\infty(Q_T)})^{c\beta^{2/3}}\|u_0\|_2.
\end{equation}
Moreover, if $u_0\in H_0^1(\Omega)$, standard estimate for the heat equation reads as 
$$
\Vert y\Vert_{L^\infty(Q_T)} \leq C \biggl(\Vert \widetilde{g}(z)\Vert_{\infty} \Vert y\Vert_{L^2(Q_T)}+\Vert f\Vert_{L^2(q_T)}+ \Vert u_0\Vert_2 \biggr)
$$
which combined with \eqref{estimyz} leads to 
$$
\Vert y\Vert_{L^\infty(Q_T)} \leq C \biggl(\big(1+ \alpha+\beta \ln^{3/2}(1+\Vert z\Vert_{L^\infty(Q_T)})\big) e^{c(s_0+\alpha^{2/3})} (1+\Vert z\Vert_{L^\infty(Q_T)})^{c\beta^{2/3}}\|u_0\|_2+ \Vert u_0\Vert_2 \biggr).
$$
It follows that if $c\beta^{2/3}<1$, then there exists an $M>0$  depending on $\Vert u_0\Vert_2, \alpha,s_0, \Omega,T$ such that 
$\Vert z\Vert_{L^\infty(Q_T)}\leq M$ implies $\Vert y\Vert_{L^\infty(Q_T)}\leq M$  since 
$$
\frac{C \biggl((1+ \alpha+\beta \ln^{3/2}(1+M)) e^{c(s_0+\alpha^{2/3})} (1+M)^{c\beta^{2/3}}\|u_0\|_2+ \Vert u_0\Vert_2 \biggr)}{M}\longrightarrow 0^+, \quad\textrm{as}\quad M\to\infty.
$$
\end{proof}

Let now $z_i\in L^2(\rho(s),Q_T)$, $i=1,2$, $y_i:=\Lambda_{\rho}(z_i)$. Then, using the estimates of Theorem \ref{controllability_result} and the characterization of Lemma \ref{solution-p}, we can proved, for all $s\geq \max(\psi(M)^{2/3},s_0)$ that 
\begin{equation}
\label{estim_Lambda_phi}
\Vert \rho(s) \big(\Lambda_s(z_1)-\Lambda_s(z_2)\big)\Vert_{L^2(Q_T)} \leq C\Vert \widetilde{g}^{\prime}\Vert_{L^\infty(0,M)}\Vert u_0\Vert_{L^2(\Omega)} \Vert \zeta\rho^{-1}(s)\Vert_{L^\infty(Q_T)} s^{-3/2} e^{cs}\Vert \rho(z_1-z_2)\Vert_{L^2(Q_T)}
\end{equation}
for some $C=C(\Omega,T)$  and $\zeta=(T-t)^{-1/2}$. The existence of a parameter $s$ large enough for which the operator $\Lambda_s$ enjoys a contracting property remains however an open issue.

\section{Conclusions}\label{conclusion}

Exact controllability of \eqref{heat-NL} has been established in \cite{EFC-EZ}, under a growth condition on $g$, by means of a Kakutani fixed point argument that is not constructive. Under the slightly stronger growth condition 
and under the additional assumption that $g^\prime$ is uniformly H\"older continuous with exponent $p\in[0,1]$, we have designed an explicit algorithm and proved its convergence to a controlled solution of \eqref{heat-NL}. Moreover, the convergence is super-linear of order greater than or equal to $1+p$ after a finite number of iterations. In turn, our approach gives a new and constructive proof of the exact controllability of \eqref{heat-NL}, which is, at least in the one-dimensional setting, simpler that in \cite{EFC-EZ} where refined $L^1$ Carleman estimates are employed. In fact, in the one-dimensional setting, we can achieve the power $3/2$ appearing in the hypothesis \ref{growth_condition} since the controlled sequence $(y_k)_{k\in \mathbb{N}}$ belongs to the space $L^{\infty}(Q_T)$. This is in general no longer true in the multidimensional setting in view of the $L^2(Q_T)$ right hand side term in (\ref{heat-Y1k}), even with $L^{\infty}(q_T)$ controls. Therefore, whether or not we can achieve the power 3/2 in the multidimensional case (as in \cite{EFC-EZ} with $L^{\infty}(q_T)$ controls but a different linearization) through our least-squares approach is an open and interesting question. 

We also emphasize that the method is general and may be applied to any other equations or systems for which a precise observability estimate for the linearized problem is available. Such estimates are usually obtained by the way of Carleman estimates as initially done in the monography of  Imanuvilov-Fursikov \cite{fursikov-imanuvilov}, extended later to a very large number of systems and situations. For instance, the method can be extended to the case of boundary controls. This remains however to be done. Moreover, the introduction of the Carleman type weights, which blow up at the final time and which depends on several parameters (itself related to the controlled solution), makes the analysis quite intricate. From this point of view, the case of hyperbolic equations (considered in \cite{munch_trelat, munch_lemoine_waveND}) is simpler. Whether or not an appropriate choice of these parameters may lead directly to some contracting properties for some fixed point operator is also an open and interesting issue. Eventually, it would be also interesting to address other types of linearity involving notably the gradient of the solution  (see \cite{burgos}): we mention notably the Burgers equation and the Navier-Stokes system, formally solved numerically from a controllability viewpoint in \cite[Part 1]{glowinski-book} and in \cite{diegoNS} respectively.

{\small
 \bibliographystyle{plain}
 \bibliography{heatLS.bib}
 }
\end{document}